\documentclass[11pt]{amsart}

\usepackage{mystyle}

\begin{document}

\title{Improved bounds for the chromatic index of $k$-uniform hypergraphs}

\date{July 3, 2026}

\author{Sarah Frederickson} 
\thanks{Frederickson is supported by the Department of Education Graduate Assistance in Areas of National Need (GAANN) program at the Georgia Institute of Technology (award \#P200A240169)}
\address{Georgia Institute of Technology}
\email{sfrederickson3@gatech.edu}

\author{Yanli Hao}
\address{Georgia Institute of Technology}
\email{yhao98@gatech.edu}

\author{Tom Kelly}
\thanks{Kelly's research is supported by the National Science Foundation under Grant No. DMS-2247078.}
\address{Georgia Institute of Technology}
\email{tom.kelly@gatech.edu}

\begin{abstract}
  In 1997, Alon and Kim conjectured that if $\cH$ is a $k$-uniform $t$-simple hypergraph with maximum degree $D$ sufficiently large, then the chromatic index $\chi'(\cH)$ is upper bounded by $(t-1+1/t+\eps)D$. Using probabilistic techniques and a nibble coloring method, we prove a general coloring theorem stating that a $k$-uniform $t$-simple hypergraph $\cH$ with large maximum degree $D$ satisfies $$\chi'(\cH) \le (b+\eps)kD,$$ where $b$ is a particular parameter derived from local structural information about $\cH$. We use structural techniques to prove sharp upper bounds on $b$ in the 3-uniform 2-simple, and 3-uniform 3-simple cases. In particular, we deduce as a corollary that for sufficiently large $D$, every $3$-uniform $2$-simple and $3$-simple hypergraph of maximum degree at most $D$ has chromatic index at most $2.3581D$ and $2.6791D$, respectively.  
\end{abstract}

\maketitle

\section{Introduction}

A \textit{(multi)hypergraph} $\cH$ consists of a vertex set $V(\cH)$ and a (multi)set $E(\cH)$ of (hyper)edges $e$ with $e\subseteq V(\cH)$. A hypergraph is \textit{$k$-uniform} if every edge has size $k$, and \textit{$t$-simple} if every two distinct edges intersect in at most $t$ vertices.

The chromatic index $\chi'(\cH)$ of a hypergraph $\cH$ is the smallest number of colors needed to color the edges of $\cH$ so that intersecting edges receive distinct colors. In 1997, Alon and Kim \cite{AK97} posed the following conjecture.

\begin{conjecture}[Alon and Kim \cite{AK97}]\label{Alon Kim}
    For every $k\ge t\ge 1$ and $\eps>0$, there exists $D_0$ such that the following holds. For every $D\ge D_0$, if $\mathcal{H}$ is a $k$-uniform, $t$-simple hypergraph with maximum degree at most $D$, then $$   \chi'(\mathcal{H})\le (t-1+1/t + \eps)D.$$
\end{conjecture}

Alon and Kim showed that this conjecture, if true, would be asymptotically tight for each $k\ge t$ for which there is a projective plane of order $t-1$.

The first nontrivial case is where $k=2$, that is, when $\cH$ is a graph, which is resolved by Vizing's Theorem \cite{Vi65} for $t=1$, and by Shannon's Theorem \cite{Sh49} for $t=2$. For the case of $t=1$ and $k$ arbitrary, this is resolved by a result of Pippenger and Spencer \cite{PS89}.

Alon and Kim's main result in \cite{AK97} is that an \textit{intersecting} $k$-uniform $t$-simple hypergraph of large maximum degree $D$ has at most $(t-1+1/t)D$ edges. Thus, this expression is a bound on the clique number of the line graph of a $k$-uniform $t$-simple hypergraph, but the chromatic number of a graph can be quite far from the clique number in general.

A simple greedy coloring bound implies that since the line graph of a $k$-uniform hypergraph has maximum degree at most $k(D-1)$, the chromatic index of the hypergraph $\cH$ satisfies $\chi'(\cH) \le k(D-1)+1$. To our knowledge, 
and as recently noted by Boyadzhiyska, Lang, Lo, and Molloy \cite{BLLMsimultaneous} (for the $t=k$ case),
there is no nontrivial published upper bound improving this.

There is some related work and partial progress, however. A conjecture of Reed \cite{ReedConj} states that a graph $G$ with maximum degree $\Delta$ and clique number $\omega$ satisfies $\chi (G)\le \left\lceil (\Delta + 1 + \omega)/2 \right\rceil$. If this conjecture is true, then Alon and Kim's main result in \cite{AK97} would imply that a $k$-uniform $t$-simple hypergraph $\cH$ with maximum degree $D$ satisfies $\chi'(\cH) \le \left\lceil (k(D-1)+ 1 + (t-1+1/t)D)/2 \right\rceil$. Hurley, de Joannis de Verclos, and Kang \cite{HVK22} proved an epsilon-version of Reed's conjecture, in which $\chi(G) \le \left\lceil (1-\eps_1)(\Delta(G) + 1) + \eps_1(\omega) \right\rceil$ where $\eps_1=0.119$, and $\Delta(G)$ is sufficiently large. This together with Alon and Kim's result implies $\chi'(\cH) \le  \left\lceil (0.881)(k(D-1) + 1) + (0.119)(t-1+1/t)D \right\rceil$ for a $k$-uniform $t$-simple hypergraph $\cH$ with large maximum degree $D$.
In fact, one can do slightly better; a result of Kelly and Postle \cite[Theorem 4.1]{KP20} (see also \cite[Remark 3.1]{HVK22}) combined with Alon and Kim's bound on the clique number of the line graph of $\cH$ and the main result of Hurley, de Joannis de Verclos, and Kang \cite{HVK22} yields $\chi'(\cH) \leq (1 - \eps + o(1))kD$, where $\eps \in (0,1)$ satisfies $(1 - \eps) = 1 - (\alpha - \alpha^2/2 - 2\eps)/2 + (\alpha - \alpha^2/2 - 2\eps)^{3/2}/6$ for $\alpha = 1 - (t - 1 + 1/t)/k$.

The special case of $t=k$ in Conjecture \ref{Alon Kim} was conjectured earlier by F\"uredi, Kahn, and Seymour \cite{FKSfractional}, who verified that this case holds for the fractional chromatic index. A conjecture of Kahn \cite{KahnConj} states that the chromatic index of a $k$-uniform hypergraph is asymptotically equivalent to its fractional chromatic index, so the F\"uredi, Kahn, and Seymour result along with Kahn's conjecture, if true, confirms the Alon-Kim conjecture for $t=k$.
For more background on hypergraph coloring, see the survey by Kang, Kelly, K{\"u}hn, Methuku, and Osthus \cite{Colorsurvey}.

While Conjecture \ref{Alon Kim} remains open in general, this paper establishes new upper bounds on the chromatic index for hypergraphs of three specific classes: 3-uniform 2-simple, 3-uniform 3-simple, and $k$-uniform $k$-simple. 

\subsection{Chromatic index bounds for hypergraphs}

Using the general edge-coloring theorem in the next section, together with some analysis of local structure, we improve the bounds of the chromatic index for three specific cases below.
  
\begin{theorem}\label{thm: 3,2 corr}
    For each $\iota >0$, there is $D_0$ such that if $D\ge D_0$ and $\cH$ is a 3-uniform 2-simple hypergraph with maximum degree at most $D$, then $$\chi'(\cH) \le \left(1-\frac29 + \frac{2}{3^5} +\iota \right)3D.$$
    In particular, for sufficiently large $D$, every $3$-uniform $2$-simple hypergraph with maximum degree at most $D$ has chromatic index at most $2.3581D$.
\end{theorem}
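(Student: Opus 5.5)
The plan is to derive the theorem from the general edge-colouring theorem of the next section, which gives $\chi'(\cH)\le (b+\eps)kD$ for a parameter $b$ built from local structure. Since $k=3$ here, it suffices to prove that for every 3-uniform 2-simple $\cH$ of maximum degree at most $D$ the parameter satisfies
$$b\le 1-\tfrac29+\tfrac{2}{3^5}+o(1).$$
Then we take $\eps=\iota/2$ and $D_0$ large.

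I expect $b$ to be governed by the sparsity of neighbourhoods in the line graph $L(\cH)$. In the usual nibble-type argument, the saving over the greedy bound $kD$ comes from pairs of edges in the neighbourhood of an edge $e$ that are themselves disjoint, and hence can share a colour. So the key structural quantity is a bound on the density of edges inside $N_{L(\cH)}(e)$. Fix an edge $e=\{x,y,z\}$ and consider the roughly $3D$ edges meeting $e$.

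\begin{itemize}
\item \textbf{2-simplicity.} Each such edge $f$ meets $e$ in one or two vertices, and a pair $\{x,y\}$ lies in at most $D$ edges. Edges through a common vertex $v\in e$ pairwise intersect, so the three stars $S_x,S_y,S_z$ are cliques in $L(\cH)$.
\item \textbf{Counting across stars.} The non-adjacent pairs we need come from edges $f\in S_x$, $g\in S_y$ with $f\cap g=\emptyset$. An edge $f\ni x$, $f\ne e$, can meet edges of $S_y$ only at its two outside vertices. Each outside vertex $w$ together with $y$ forms a pair, and by 2-simplicity, if $w\notin e$, at most one edge contains both $y$ and $w$ (through the vertex $w$).
\item \textbf{Bounding common neighbours.} Hence each $f$ meets at most about $2\cdot D$ edges of $S_y$, counted through its two outer vertices. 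A careful count shows that a constant fraction of cross pairs are disjoint.
\end{itemize}

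This is where I expect the $2/9$ to emerge: roughly $(2/3)\cdot(1/3)$, from the proportion of cross-star pairs relative to all $\binom{3D}{2}$ pairs. The correction $2/3^5$ should come from a second-order term in the definition of $b$, such as a product of probabilities $(1/3)^5$ over a configuration of five relevant edges or vertices, with multiplicity $2$.

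The concrete steps, in order, are as follows.

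\begin{enumerate}
\item Write out $b$ from the general theorem as an explicit function of local counts around an edge: degrees into each star, and the numbers of intersecting cross pairs.
\item Reduce to the extremal case where all degrees equal $D$, by a monotonicity or convexity argument in those counts.
\item Bound the number of intersecting cross pairs using 2-simplicity. The key fact is that two edges through distinct vertices of $e$ and not containing $e$'s third vertex share at most one vertex outside $e$, and any pair is covered at most... by edges; this yields the linear-in-$D$ restriction per edge.
\item Optimise the resulting expression over the remaining free parameters, namely the fraction of edges meeting $e$ in two vertices versus one. I expect the maximiser to give exactly $1-2/9+2/3^5$.
\item Compute numerically: $3(1-2/9+2/243)=2.3580\ldots$, which gives the ``in particular'' statement.
\end{enumerate}

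The main obstacle is step 4. The general parameter $b$ presumably takes a supremum over local configurations, and one must show that edges meeting $e$ in two vertices cannot beat the bound. Such edges lie in two stars simultaneously and reduce the available disjoint pairs. I would first handle the case with no such edges, then treat their fraction $\lambda$ as a parameter and show the bound is maximised at the sharp configuration. For that sharp configuration I would check, using a construction like a Steiner triple system blow-up, that the bound on $b$ cannot be improved.
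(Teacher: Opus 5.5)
Your outer reduction is exactly the paper's proof of this theorem: put $\varepsilon=\iota/2$, apply Theorem~\ref{thm:gct}, and verify its hypothesis via $b_{L(H),3D'}(e)\le 1-\frac29+\frac{2}{3^5}+o(1)$. This must hold for every subhypergraph $H$ with $\Delta(H)\le D'$ and $D'\ge \varepsilon D/3$, which is still 3-uniform and 2-simple. But all the content lies in that bound (Lemma~\ref{lem: 3,2 a parameter}), and your plan for it has genuine gaps.

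First, the structural facts you use are those of a \emph{linear} hypergraph. In a 3-uniform 2-simple hypergraph a pair $\{y,w\}$ with $w\notin e$ may lie in up to $D$ edges; in the extremal example each pair $\{v_i,x_j\}$ lies in $(D-1)/3$ edges. Also $\{v_1,a,b\}$ and $\{v_2,a,b\}$ may both be edges, so the ``key fact'' of your step~3 is false. What 2-simplicity actually gives is that two edges through the \emph{same} vertex $v\in e$ cannot share both outer vertices. That is what drives the pair count. The vertices $x$ with $\sum_{v\in e}\deg(v,x)\ge D^{2/3}$ form a set $X_b$ of size $O(D^{1/3})$. So only $\binom{|X_b|}{2}=O(D^{2/3})$ edges at $v$ have both outer vertices in $X_b$, whence $\sum_{x\in X_b}\deg(v,x)\le D+O(D^{2/3})$ instead of $2D$. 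You also have $\sum_{v\in e}\deg(v,x)\le D$; this is valid because, by Lemmas~\ref{lem: structure1} and~\ref{lem: tableop new}, in a maximiser every other edge meets $e$ in exactly one vertex. Combining these with Lemma~\ref{lem: tableops redo} gives $w(X)=\sum_{x}\sum_{i<i'}\deg(v_i,x)\deg(v_{i'},x)\le D^2+o(D^2)$, hence $P(e)\ge 2D^2-o(D^2)$. The same reduction shows your step~4 is not the obstacle: the fraction of edges meeting $e$ in two vertices is $0$ for a maximiser.

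The real obstacle is the triple term, which your plan never addresses. Since $b=\deg_{L(\mathcal{H})}(e)/(3D)-P/(3D)^2+T/(3D)^3$, the constant $2/3^5$ is simply $T/(3D)^3$ with $T=\frac29 D^3$ in the extremal configuration. It is not a product over five objects. Because $T$ can be large only when $P$ is large, the two must be controlled jointly. Minimising $P$ and then applying a generic triangle bound such as $T\le (P/3)^{3/2}$ only gives $b\le 1-\frac29+0.0201$, i.e.\ about $2.394D$, not $2.3581D$. (That bound is Lemma~\ref{lem: trianglecount1}, applicable since the complement of the line graph induced on $N_{L(\mathcal{H})}(e)$ is tripartite.)

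The paper gets the sharp constant by singling out the three heaviest vertices $x_1,x_2,x_3$ of $X_b$. Independent triples of edges, each having exactly one outer vertex in $\{x_1,x_2,x_3\}$, are bounded by the permanent of the codegree matrix $(\deg(v_i,x_j))_{i,j\le 3}$. There are roughly $3D-\sum_{i,j\le3}\deg(v_i,x_j)$ edges with no outer vertex in that set, and each lies in at most $D^2$ triples. After normalising $a_{i,j}\approx \deg(v_i,x_j)/D$, everything reduces to Lemma~\ref{lem:3xn-matrix-bound}, proved via Lemmas~\ref{lem:square} and~\ref{lem:sum-of-squares-minus-S}: $\frac19\sum_j\sum_{i<i'}a_{i,j}a_{i',j}+\frac1{27}\operatorname{perm}(A_{[3]})-\frac1{27}\sum_{i,j\le3}a_{i,j}\le \frac{2}{3^5}$, with equality at the balanced block of entries $1/3$. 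An argument of this kind, trading $T$ against $P$ exactly, is the missing idea.
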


\begin{theorem}\label{thm: 3,3 corr}
    For each $\iota >0$, there is $D_0$ such that if $D\ge D_0$ and $\cH$ is a 3-uniform 3-simple hypergraph with maximum degree at most $D$, then $$\chi'(\cH) \le \left(1-\frac19 + \frac{1}{3^5} +\iota \right)3D.$$
    In particular, for sufficiently large $D$, every $3$-uniform $3$-simple hypergraph with maximum degree at most $D$ has chromatic index at most $2.6791D$.
\end{theorem}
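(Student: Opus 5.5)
The plan is to derive Theorem~\ref{thm: 3,3 corr} from the general nibble coloring theorem announced in the abstract. That theorem says a $k$-uniform $t$-simple hypergraph of large maximum degree $D$ satisfies $\chi'(\cH)\le(b+\eps)kD$, where $b$ is a parameter determined by the local structure of $\cH$. So the whole task is to show that for $k=t=3$ one can take
\[
b\le 1-\frac19+\frac1{3^5}.
\]
Then we choose $\eps<\iota$ and $D_0$ accordingly, and the numerical statement follows because $3(1-1/9+1/243)=2.6790\ldots$.

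I expect $b$ to be defined through a random (nibble) process. At each step an edge survives with some probability, and its list loses colors at a rate governed by the number of edges intersecting it. The savings over the greedy bound $kD$ come from pairs of neighboring edges of $e$ that themselves intersect: such pairs can receive the same color only rarely, so they "waste" less of $e$'s palette. A natural form is
\[
b=1-\tfrac{1}{k^2}\,\mathbb{E}[\text{normalized local density}]+\text{correction},
\]
where the density counts, for an edge $e$, how the $\approx kD$ edges of its neighborhood overlap.

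The key structural step is the extremal case $t=3$, where multiple edges are allowed. Write $e=\{x,y,z\}$. Let $d_S$ be the number of other edges $f$ with $f\cap e=S$, for $\emptyset\ne S\subseteq e$. The neighborhood size is then $\sum_S d_S\le 3D-\sum_{|S|\ge2}(|S|-1)d_S$. In other words, edges meeting $e$ in two or three vertices are counted multiple times in the degree sum, and this is exactly the $1/9$ saving. I would split into cases according to the proportion of edges meeting $e$ in two or three vertices.

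\emph{Sparse case.} The neighborhood has nearly $3D$ distinct edges, so the overlaps are mostly single vertices. Edges through the same vertex $x$ pairwise intersect, which gives many intersecting pairs among neighbors. Conversely, the nibble analysis should give the saving from nonadjacent pairs among neighbors: roughly, the neighbor lists through $x$ versus through $y$ repeat colors. Optimizing the balance between the two savings should produce the value $1/9-1/3^5$.

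\emph{Dense case.} The edge count itself drops.

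To finish, I would maximize $b$ over feasible local configurations via a small optimization in the variables $d_S/D$. I would check sharpness by examining the configuration where $e$ lies in a triangle-like cluster of multiedges; this is presumably where the $3^{-5}$ term arises as a product of five probabilities, each equal to $1/3$.

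The main obstacle is this local optimization. One has to verify that the combined expression, which trades off neighborhood size against intersecting-pair density, never exceeds $8/9+1/243$ for any feasible degree profile. I would first parametrize by $\alpha=\sum_{|S|\ge2}d_S/(3D)$, reduce to a one-variable function by convexity arguments, and check its maximum analytically at the boundary.
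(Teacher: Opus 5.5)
\textbf{Verdict: there is a genuine gap.} You have not proved the local bound the theorem rests on, and the mechanism you sketch for it is wrong.

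Your outer reduction is right. The theorem follows from Theorem~\ref{thm:gct} once every edge $e$ of every $3$-uniform $3$-simple hypergraph $\mathcal H$ of maximum degree $D$ satisfies $b_{L(\mathcal H),3D}(e)\le 1-\frac19+\frac1{3^5}+o(1)$. Subhypergraphs stay in the class, and $3\cdot\frac{217}{243}=2.6790\ldots$ gives the numerical constant.

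\textbf{The mechanism is misidentified.} The parameter is $b(e)=\frac{\deg(e)}{3D}-\frac{P(e)}{(3D)^2}+\frac{T(e)}{(3D)^3}$. Here $P$ and $T$ count pairwise \emph{disjoint} pairs and triples of neighbours of $e$; this is a Bonferroni estimate for the event that some neighbour receives a given colour. So the saving comes from disjoint neighbours, which can share a colour. It does not come from intersecting neighbours, which never can.
\begin{itemize}
\item The $\frac19$ does not come from edges meeting $e$ in two or three vertices. By Lemma~\ref{lem: structure1}, a maximiser of $b$ has all $3D-3$ neighbours meeting $e$ in exactly one vertex.
\item The $\frac19$ actually arises as follows. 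Of the roughly $3D^2$ pairs of neighbours through different vertices of $e$, at most $w(X)\le 2D^2$ can intersect, because each outside vertex has degree at most $D$ (Lemma~\ref{lem: tableops redo}). This forces $P(e)\ge D^2-9D$.
\item Your variables $d_S$ cannot see this. The value of $b$ depends on which neighbours are disjoint from each other, not on how they meet $e$.
\item The extremal configuration is not a cluster of multiedges around $e$. It is $(D-1)/3$ copies of each of the nine lines of $\mathrm{AG}(2,3)$ not parallel to $e$. There $T=3(D/3)^3$, giving $T/(3D)^3=3^{-5}$.
\end{itemize}

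\textbf{The essential missing ingredient is a sharp upper bound on the third-order term $T(e)$.} Your one-parameter optimisation never addresses it. The complement of the neighbourhood graph is $3$-partite, so Lemma~\ref{lem: trianglecount1} gives $T\le (P/3)^{3/2}$. But optimising over $P\ge D^2$, this only yields $b\le 1-\frac19+\frac1{81\sqrt3}$, i.e.\ about $2.688D$. That is the $k=3$ case of Theorem~\ref{thm: kk corr}, not the claimed constant.

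To reach $\frac1{3^5}$, the paper uses a stability argument. Write $P=(1+\delta)D^2-9D$. The paper then shows:
\begin{itemize}
\item $\delta\le 0.031$, since otherwise $b$ is too small to be maximal;
\item all but $O(\delta D)$ of the degree from $e$ goes to exactly six outside vertices;
\item each neighbour using only these six vertices meets about $(\frac43\pm O(\sqrt\delta))D$ neighbours through the other two vertices of $e$ (the ``moreover'' part of Lemma~\ref{lem: tableops redo});
\item counting triangles via $\frac1{12}\sum\deg^2$ (Lemma~\ref{lem: trianglecount2}) then gives $T\le(\frac19+3\delta)D^3$.
\end{itemize}
With this, the $\delta$-terms in $-\frac{P}{9D^2}+\frac{T}{27D^3}$ cancel exactly. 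Without some such $\delta$-sensitive bound on $T$, no convexity argument in the variables you propose can reach $1-\frac19+\frac1{3^5}$.
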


\begin{theorem}\label{thm: kk corr}
  For each $\iota >0$ and $k\in \mathbb{Z}^+$, there is $D_0$ such that if $D\ge D_0$ and $\cH$ is a $k$-uniform $k$-simple hypergraph with maximum degree at most $D$, then $$\chi'(\cH) \le \left(1-\frac{k-1}{2k^2} + \frac{(k-1)(k-2)}{6k^3\sqrt{k}} +\iota \right)kD.$$
\end{theorem}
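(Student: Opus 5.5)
The plan is to deduce Theorem~\ref{thm: kk corr} from the general coloring theorem announced in the abstract. That theorem says that if $\cH$ is $k$-uniform and $t$-simple with large maximum degree $D$, then $\chi'(\cH)\le (b+\eps)kD$. Here $b$ is a parameter built from local structure: roughly, the expected fraction of the $kD$ potential neighbors of an edge that are actually ``blocked'' after a random nibble. Heuristically, $b$ should have the shape $1-\alpha+\beta$. The quantity $\alpha$ measures how much the neighborhood of a typical edge $e$ in the line graph falls short of $k(D-1)$ distinct edges, or equivalently how many pairs of neighbors of $e$ coincide or intersect. The correction $\beta$ is third order. So the work is to prove, for an arbitrary $k$-uniform $k$-simple hypergraph (that is, a multihypergraph with no restriction other than uniformity), that
\[
b\le 1-\frac{k-1}{2k^2}+\frac{(k-1)(k-2)}{6k^3\sqrt{k}}.
\]
With $\iota$ absorbing $\eps$ and the $o(1)$ terms, this is exactly the statement.

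\textbf{Step 1: reduce to a local quantity.} I would fix an edge $e$ and let $d_v$ be the number of other edges through each $v\in e$. I would then parametrize the local structure by the overlap profile of the line-graph neighborhood $N(e)$. Let $x_S$ be the number of edges $f$ with $f\cap e=S$, for $\emptyset\ne S\subseteq e$. Then $|N(e)|=\sum_S x_S$, while $\sum_{v}d_v=\sum_S|S|x_S$. An edge meeting $e$ in $|S|\ge 2$ vertices is counted $|S|$ times in the greedy bound. This is the saving that produces the $1-(k-1)/(2k^2)$-type term, after optimizing against the density of the neighborhood. The general theorem's $b$ presumably combines these two effects: overcounting from large intersections, and edges inside $N(e)$ that share colors with positive probability.

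\textbf{Step 2: two competing regimes.} If $|N(e)|$ is close to $kD$, then most neighbors meet $e$ in a single vertex, and for $k$-simple hypergraphs the neighborhood then contains many pairwise-intersecting pairs. This follows by counting, since the $D$ edges through $v\in e$ pairwise intersect at $v$. Hence the nibble gains a second-order saving of order $\binom{k}{1}\binom{D}{2}/(kD)^2\approx 1/(2k)$ per color, after weighting by the probability that two neighbors at a common vertex receive the same color, of order $1/k$. Together these give a saving of $(k-1)/(2k^2)$. If instead $|N(e)|$ is small, the saving is directly $1-|N(e)|/(kD)$. I would write $b$ as the maximum over a one-parameter family $\lambda=|N(e)|/(kD)$ of $\lambda(1-\tfrac{k-1}{2k}\lambda+c\lambda^{2})$. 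This is the expansion $1-e^{-\ldots}$ truncated at third order, which is what inclusion--exclusion in the nibble yields.

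\textbf{Step 3: the third-order term.} The correction $(k-1)(k-2)/(6k^3\sqrt k)$ comes from triples of neighbors of $e$ that could share a color; the factor $(k-1)(k-2)/6$ counts them via $\binom{k}{3}$-type choices. I expect this step to be the main obstacle. One must bound the triple-overlap term uniformly over all local configurations, and the $\sqrt{k}$ suggests an optimization, for instance an AM--GM or Cauchy--Schwarz balance between the sizes $d_v$ and the pair-overlap counts, with equality at a particular ratio. I would first try to bound the triple term by the $3/2$ power of the pair term, mirroring the $(\cdot)^{3/2}/6$ appearing in the Kelly--Postle bound quoted above. I would then maximize the resulting single-variable expression, checking that the maximizer gives exactly the stated constant.
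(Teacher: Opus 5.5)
\textbf{There is a genuine gap.} Your overall architecture matches the paper's. You invoke Theorem~\ref{thm:gct}, bound the local parameter for every edge of every $k$-uniform hypergraph of maximum degree $D'$ (a class closed under subhypergraphs), and control triples by a $3/2$-power of pairs. But the parameter is the explicit quantity $b_{L(\cH),kD}(e)=\deg(e)/(kD)-P(e)/(kD)^2+T(e)/(kD)^3$, where $P$ and $T$ count \emph{non-adjacent} pairs and triples in $N_{L(\cH)}(e)$.

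\textbf{Step 2 has the second-order saving backwards.} Two neighbors of $e$ through a common vertex $v\in e$ are adjacent in the line graph, so they can never share a color and contribute nothing to $P$. The saving comes from \emph{disjoint} neighbors, and the missing idea is a lower bound on how many there are. First reduce to the case where every $v_i\in e$ has degree $D$ and no edge meets $e$ in two vertices; the paper does this by a local-modification argument, Lemma~\ref{lem: structure1}. Contrary to your Step~1, multi-vertex overlaps with $e$ only lower $\deg(e)$ and are absent at the optimum. After the reduction, $N(e)$ is a union of $k$ cliques of size $D-1$, and a cross pair $g\ni v_i$, $g'\ni v_j$ ($i\neq j$) is adjacent only if they share some $x\notin e$. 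So the number of adjacent cross pairs is at most $w(X)=\sum_x\sum_{i<j}\deg(v_i,x)\deg(v_j,x)$. Two constraints control this sum: the degree cap gives $\sum_i\deg(v_i,x)\le D$ for each $x$, and $\sum_x\sum_i\deg(v_i,x)=k(k-1)(D-1)$. Fact~\ref{lem: ops1} then gives $w(X)\le\frac{k-1}{2k}\cdot D\cdot k(k-1)D=\frac{(k-1)^2}{2}D^2$. Hence $P\ge\binom{k}{2}(D-1)^2-\frac{(k-1)^2}{2}D^2=\frac{k-1}{2}D^2-O(D)$, and this is exactly the source of the $\frac{k-1}{2k^2}$ term. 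Your heuristic does not produce it: $\frac{1}{2k}\cdot\frac1k\neq\frac{k-1}{2k^2}$. Likewise, your family $\lambda(1-\frac{k-1}{2k}\lambda+c\lambda^2)$ at $\lambda=1$ equals $1-\frac{k-1}{2k}+c$, not the target.

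\textbf{Step 3 would also give the wrong constant.} A generic $3/2$-power bound of Rivin/Kelly--Postle type, $T\le\frac{\sqrt2}{3}P^{3/2}$, only yields a cubic term $\frac{(k-1)^{3/2}}{6k^3}$. This is strictly larger than $\frac{(k-1)(k-2)}{6k^3\sqrt k}$, so your final check that the maximizer gives exactly the stated constant would fail. Two ingredients are needed:
\begin{itemize}
\item The complement of $G[N(e)]$ is $k$-partite, since the $k$ cliques become independent sets.
\item Lemma~\ref{lem: trianglecount1}: a $k$-partite graph with $m$ edges has at most $\binom{k}{3}(m/\binom k2)^{3/2}$ triangles.
\end{itemize}
Writing $P=pD^2$ with $\frac{k-1}{2}\lesssim p\le\binom k2$, you get $b\le 1-p/k^2+c\,p^{3/2}+o(1)$ with $c=\binom k3/(k^3\binom k2^{3/2})$. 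This is decreasing in $p$ on that range, since its derivative is at most $-\frac{1}{k^2}+\frac{k-2}{2k^3}<0$. So the worst case is the minimal $p=\frac{k-1}{2}$, which gives exactly $1-\frac{k-1}{2k^2}+\frac{(k-1)(k-2)}{6k^3\sqrt k}$. The $\sqrt k$ comes from $\bigl(\frac{k-1}{2}/\binom k2\bigr)^{3/2}=k^{-3/2}$, not from an AM--GM balance among the $d_v$.
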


For comparison, Conjecture~\ref{Alon Kim} predicts asymptotic bounds of $(3/2)D$ and $(7/3)D$ in the 3-uniform 2-simple and 3-uniform 3-simple cases, respectively. Our bounds are $2.3581D$ and $2.6791D$, improving the best bounds obtainable from previously known results, which are approximately $2.742D$ and $2.861D$. In the $k$-uniform $k$-simple case, Conjecture~\ref{Alon Kim} predicts a coefficient of $1-1/k+1/k^2$ in front of $kD$. Our bound has coefficient $1-1/(2k)+O(k^{-3/2})$, whereas the best previously known bound has coefficient $1-1/(4k)+O(k^{-3/2})$.

\subsection{A general edge-coloring theorem for hypergraphs}

The proofs of Theorems \ref{thm: 3,2 corr}, \ref{thm: 3,3 corr}, and \ref{thm: kk corr} are powered by a general edge-coloring theorem. This result provides an upper bound on the chromatic index of a hypergraph based on a local parameter $\theb$, defined for each edge via the structure of its neighborhood in the line graph. We begin by defining the key local parameters of a graph that will form the basis of our analysis.

    Let $G$ be a graph. For any vertex $v\in V(G)$, let $P_G(v)$ denote the \textit{number of independent pairs} in $N_G(v)$, which is defined as follows: $$ P_{G}(v) := \binom{deg_G(v)}{2}- |E(N_G(v))|.$$
    Let $T_G(v)$ denote the \textit{number of independent triples} in $N_G(v)$, which is defined as follows: $$T_{G}(v) := |\{ \{u_1,u_2,u_3\} \subseteq N_G(v) : u_i \nsim u_j \text{ for all } i,j\in [3] \}| .$$
    That is, $P_{G}(v)$ is the number of non-edges and $T_G(v)$ is the number of independent $3$-sets in the neighborhood of $v$.

For a graph $G$ and number $M \ge \Delta(G)$, we will use the parameter $b_{G,M}(v)$, which is defined as follows: $$b_{G,M}(v) :=\frac{\deg_G(v)}{M} -  \frac{P_{G}(v)}{M^2} + \frac{T_{G}(v)}{M^3}.$$
The terms in $b_{G,M}(v)$ arise from an inclusion-exclusion estimate for the number of colors made unavailable at $v$ during a random coloring procedure.
We will write $P(v)$, $T(v)$, and $b(v)$ when $G$ and $M$ are clear from context.

In our application, $G=L(\cH)$ is the line graph of a $k$-uniform hypergraph $\cH$, and we take $M=kD$, where $D\ge \Delta(\cH)$. Thus we will often consider $b_{L(\cH), kD}(e)$ where $e\in E(\cH)$, viewing the edge as a vertex in the line graph.

The following general theorem states that if the parameter $\theb(e)$ is bounded above by a constant $\theb$ for all edges in every sub-hypergraph of bounded maximum degree, then the chromatic index of the entire hypergraph is at most $(\theb + \eps)kD$.

\begin{theorem}\label{thm:gct}
      For all $\eps >0$ and $k\in \mathbb{Z}^{+}$, there exists $D_0$ such that the following holds for all $D\ge D_0$ and $\theb\in (0,1)$. Let $\cH$ be a $k$-uniform hypergraph with maximum degree at most $D$.
      If for all $D'\ge \eps D/3$ and $H\subseteq \cH$ with $\Delta(H)\le D',$ we have $$\theb_{L(H), kD'}(e) \le \theb$$
  for all $e\in E(H)$, then $$\chi'(\cH) \le (\theb+\eps)kD.$$
\end{theorem}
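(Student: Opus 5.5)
We prove Theorem~\ref{thm:gct} with a nibble (semi-random) coloring procedure run on the line graph $G=L(\cH)$. The palette is $C=\{1,\dots,\lceil(\theb+\eps)kD\rceil\}$. We color in rounds, and after each round we delete the colored edges. This gives a nested sequence of subhypergraphs $\cH=H_0\supseteq H_1\supseteq\cdots$, whose maximum degrees we track as $D_i$, together with lists $L_i(e)\subseteq C$ of colors still available to each uncolored edge.

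The key idea is that a single round in which every edge tries a uniformly random color shows the meaning of $\theb$. Fix an edge $e$ and a color $c$. The color $c$ is lost at $e$ if some neighbour $f\sim e$ keeps $c$. Each neighbour picks $c$ with probability $1/M$, where $M=kD_i$. By inclusion--exclusion truncated at the third term (Bonferroni), the probability that some neighbour picks $c$ is at most
\[
\frac{\deg(e)}{M}-\frac{\#\{\text{nonadjacent pairs}\}}{M^2}+\frac{\#\{\text{independent triples}\}}{M^3}.
\]
Adjacent pairs $f\sim f'$ cannot both keep $c$, so they contribute no pairwise overlap; this is why only $P$ and $T$ appear. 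The bound is therefore $\theb_{L(H_i),kD_i}(e)\le\theb$. So a full-palette random coloring loses only about a $\theb$-fraction of the colors at each edge, whereas the greedy bound loses a $1$-fraction.

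To turn this into a proof, the plan is as follows.

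First, we run a "wasteful" first stage on the whole palette. Each edge is activated with small probability, or the saving from $\theb$ is realized across $O(1/\eps)$ nibbles. In each nibble an activated edge picks a random color from its list, and it keeps the color if no neighbour picked the same one. We then use an equalizing coin-flip trick so that every color survives in each list with exactly the right probability. The goal is to maintain the invariant $|L_i(e)|\ge(1-\theb_{\text{cum}}+\eps/2)kD$-type bounds while $D_i$ decays. We stop once $D_i\le \eps D/3$. This is precisely why the hypothesis is required for all sub-hypergraphs $H\subseteq\cH$ with $\Delta(H)\le D'$ for all $D'\ge\eps D/3$: every intermediate $H_i$, normalized by its own $kD_i$, satisfies $\theb\le\theb$.

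Second, we finish greedily. At the end, each remaining edge has at most $k(D_i-1)<k\eps D/3$ uncolored neighbours, while its list keeps more than $\eps kD/3$ colors. So the remainder can be colored greedily, or via a standard list-coloring lemma.

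Concentration is handled as follows. Each nibble changes $|L_i(e)|$ and $\deg_{H_i}$ only through events determined by edges within distance $2$ in $L(\cH)$. So Talagrand's inequality or Azuma-type bounds, combined with the Lovász Local Lemma, give that all edges satisfy the expectations up to $D^{2/3}$-size errors. Degrees are bounded by $kD$ and the number of rounds is constant, so these errors accumulate to $o(D)$ and are absorbed into $\eps$.

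The main obstacle is bookkeeping the loss of colors across rounds. In a later round the neighbours of $e$ pick from their own lists $L_i(f)$ rather than the full palette. The probability that $f$ picks $c$ is then $1/|L_i(f)|$, conditioned on $c\in L_i(f)$, so the inclusion--exclusion computation must be redone with non-uniform probabilities. I would first try to keep every list of equal size, via equalizing coin flips, and with each color present with the same probability. Then the per-round loss factor is again $\theb$ evaluated on the current $H_i$ at scale $kD_i$, and the total number of colors used telescopes to $(\theb+\eps)kD$.
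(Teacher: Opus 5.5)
There is a genuine gap at the core of your plan: the random procedure you describe does not lose colours at rate $b$ per unit of degree decrease. In your key computation, $1/M$ is the probability that a neighbour $f$ of $e$ \emph{picks} $c$, not the probability that it \emph{keeps} $c$. If you apply Bonferroni to the ``picks'' events, adjacent neighbours can both pick $c$. Then all $\binom{\deg(e)}{2}$ pairs enter the second term and no structural saving appears.

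Suppose instead you apply it to the ``keeps'' events: each edge picks $c$ independently with probability $p$ and keeps it only if no neighbour also picked $c$. With degrees about $M$, you get $\Pr[f \text{ keeps } c]\approx pe^{-pM}\le 1/(eM)$. A nonadjacent pair with $\lambda M$ common neighbours keeps $c$ jointly with probability about $p^2e^{-(2-\lambda)pM}$. Write $y=pM$ and take $\deg(e)=M$, and compare colours lost at $e$ with neighbours of $e$ that get coloured. Each nonadjacent pair then contributes about $\frac{1}{M^2}\,ye^{-(1-\lambda)y}\le \frac{1}{e(1-\lambda)M^2}$ to the saving, whereas $b$ credits it with $\frac{1}{M^2}$. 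So the saving falls short of $b$ whenever nonadjacent neighbours share few common neighbours, as in linear hypergraphs.

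Your wasteful nibble with small activation probability $\theta$ is worse still. There $y=O(\theta/\varepsilon)$, so the saving per unit of degree decrease is $O(\theta/\varepsilon)$ and vanishes as $\theta\to 0$. Equalising lists by coin flips does not touch this issue.

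The missing ingredient is a per-colour sampler whose colour classes have full density. You need an independent set containing each vertex with probability $\approx 1/\Delta$ and each nonadjacent pair with probability at least $\approx 1/\Delta^2$. Triples must appear with probability at most $\approx 1/\Delta^3$, up to a correction absorbed by the pair term.

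The paper gets this from the Hurley--de Joannis de Verclos--Kang priority algorithm, with activation probability $\gamma/\Delta$ and $\gamma$ large, run on a $\Delta$-regular supergraph of $G_\alpha$ (Theorem~\ref{thm: 2.1 - ind sets}). Random priorities give $\Pr[v\in I]\approx(1-e^{-\gamma})/\Delta$ and $\Delta^2\Pr[u,v\in I]\approx 2/((1-\lambda)(2-\lambda))\ge 1$. Bonferroni then yields $\Pr[I\cap N(v)\neq\emptyset]\le b(v)+o(1)$.

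The nibble is then taken over colours rather than over edges. Each round (Lemma~\ref{thm: 2.2 - nibble}) uses only $\ell_i\approx kD_i/\gamma$ colours, each generated independently. So each edge is coloured with probability about $1/\gamma$, while its list loses at most about $b\ell_i$ colours. About $(b+\varepsilon/2)k(D_i-D_{i+1})$ fresh colours are added each round, instead of fixing the palette in advance. Non-uniform lists are handled not by equalisation but by the monotonicity $b_{G_\alpha,\Delta}(v)\le b_{G,\Delta}(v)$. Independence across colours gives concentration via McDiarmid and Chernoff.

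Your use of the hypothesis for every $D'\ge\varepsilon D/3$ and your greedy finish match the paper. Without a full-density sampler, however, the main stage cannot reach $(b+\varepsilon)kD$.
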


The point of Theorem~\ref{thm:gct} is that it separates the probabilistic coloring argument from the structural extremal analysis. 
In order to use this theorem to prove Theorems \ref{thm: 3,2 corr}, \ref{thm: 3,3 corr}, and \ref{thm: kk corr}, we will prove an upper bound on the parameter $\theb(e)$ in each of the three cases. For the former two cases, those of 3-uniform hypergraphs, our bound on the value of $b(e)$ is best possible.

\subsection{Outline of the paper}
Section 2 contains preliminary results and their proofs. Section 3 contains the proofs of Theorems \ref{thm: 3,2 corr}, \ref{thm: 3,3 corr}, and \ref{thm: kk corr}, which requires proving bounds on $\theb(e)$ in each of these three contexts. Section 4 contains the proof of our main theorem, Theorem \ref{thm:gct}.

Here we give a general outline of the proof of Theorem \ref{thm:gct}.
Our proof uses an iterative coloring procedure to color a few vertices from the line graph $L(\cH)$ at a time, where in each iteration we consider a random partial coloring and show with nonzero probability both the maximum degree and the number of colors made unavailable drop appropriately so that the nibble step of the coloring procedure can be repeated. Ultimately, when the maximum degree is small enough, we can finish coloring the vertices of $L(\cH)$ greedily.

To generate each color class of this partial coloring, we use an algorithm for sampling a random independent set in a regular graph introduced by Hurley, de Joannis de Verclos, and Kang \cite{HVK22} to improve bounds on the chromatic number of \textit{$\sigma$-sparse} graphs. We prove a generalization of their result about the probabilistic properties of this algorithm that can be applied to line graphs of uniform hypergraphs. Our nibble approach is inspired by theirs, but, as discussed in Section \ref{section: 4-gctproof}, we have modified the random partial coloring procedure which resulted in a simplified probabilistic analysis.

\section{Triangle Counting Lemmas and Other Tools}
This section has some useful structural tools, which will help us bound the parameter $\theb$ in the next section. We start with lemmas useful for counting the number of independent triples in the neighborhood of a vertex.

\begin{lemma} \label{lem: trianglecount1}
    A $k$-partite graph with $m$ edges has at most $\binom{k}{3} (m/\binom{k}{2})^{3/2}$ triangles. 
\end{lemma}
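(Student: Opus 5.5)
Let $G$ be $k$-partite with parts $V_1,\dots,V_k$, and let $m_{ij}$ denote the number of edges between $V_i$ and $V_j$, so that $\sum_{i<j} m_{ij}=m$. Every triangle uses three distinct parts. The plan is to bound the triangles on each triple of parts in terms of the three edge counts involved, and then optimize over the distribution of the $m_{ij}$.

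\textbf{Step 1 (one triple of parts).} For parts $i<j<l$, let $t_{ijl}$ be the number of triangles with one vertex in each of $V_i,V_j,V_l$. I will show
\[
t_{ijl} \le \sqrt{m_{ij}m_{jl}m_{il}}.
\]
A triangle is determined by an edge $uv$ between $V_i$ and $V_j$ together with a vertex $w\in V_l$ adjacent to both $u$ and $v$. Writing $d_l(x)$ for the number of neighbours of $x$ in $V_l$, this gives
\[
t_{ijl}\le \sum_{uv\in E(V_i,V_j)} \min\{d_l(u),d_l(v)\} \le \sum_{uv} \sqrt{d_l(u)d_l(v)}.
\]
By Cauchy--Schwarz, $\sum_{uv}\sqrt{d_l(u)d_l(v)}\le \bigl(\sum_{uv} d_l(u)\bigr)^{1/2}\bigl(\sum_{uv} d_l(v)\bigr)^{1/2}$. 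This alone does not give the bound, because $\sum_{uv} d_l(u)$ counts paths $v\!-\!u\!-\!w$ rather than edges.

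A cleaner route is the standard Loomis--Whitney / Shearer-type inequality for tripartite graphs. View the triangle set $T\subseteq V_i\times V_j\times V_l$; its three coordinate projections land in the three edge sets. Loomis--Whitney gives $|T|^2\le m_{ij}m_{jl}m_{il}$, which is exactly the claimed bound. I will cite Loomis--Whitney, or prove it via Shearer's lemma applied to a uniformly random triangle: $2H(X,Y,Z)\le H(X,Y)+H(Y,Z)+H(X,Z)$. I expect this step to be the main obstacle only in deciding which standard inequality to invoke; the entropy proof is short.

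\textbf{Step 2 (optimization).} Summing over triples, the number of triangles is at most
\[
\sum_{i<j<l}\sqrt{m_{ij}m_{jl}m_{il}}.
\]
By AM--GM, $\sqrt{m_{ij}m_{jl}m_{il}}\le \bigl((m_{ij}+m_{jl}+m_{il})/3\bigr)^{3/2}$. Then apply the power mean inequality, since $x\mapsto x^{3/2}$ is convex, though convexity points the wrong way for an upper bound. So I will instead handle this step via H\"older.

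Set $x_{ij}=m_{ij}^{1/2}$. The sum becomes $\sum_{i<j<l} x_{ij}x_{jl}x_{il}$, which I bound by Maclaurin-type reasoning. By H\"older with exponent 3, summing over triples, each product is at most $\tfrac13(x_{ij}^3+x_{jl}^3+x_{il}^3)$. That gives $(k-2)\sum x^3/3$, which is also the wrong direction of homogeneity.

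The correct tool is the following. With $p=\sum_{i<j} m_{ij}=\sum x_{ij}^2$ fixed, maximize $\sum x_{ij}x_{jl}x_{il}$. This is the triangle density of a weighted complete graph $K_k$ with edge weights $x_{ij}$. Use $\sum_{\text{triangles}} x_{ij}x_{jl}x_{il}\le \tfrac16\operatorname{tr}(X^3)\le \tfrac16\bigl(\operatorname{tr}X^2\bigr)^{3/2}$, where $X$ is the symmetric weighted adjacency matrix. Since $\operatorname{tr}X^2=2m$, this yields $\tfrac{\sqrt2}{3}m^{3/2}$, which is weaker than the target when $k$ is small. To get the exact constant $\binom{k}{3}\binom{k}{2}^{-3/2}$, attained when all $m_{ij}$ are equal, I will instead use Lagrange multipliers or symmetrization on $\sum x_{ij}x_{jl}x_{il}$ subject to $\sum x_{ij}^2=m$, over nonnegative $x$. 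The maximum of a cubic form with nonnegative coefficients on the sphere is attained at the symmetric point, because the form is Schur-concave-like. Concretely, Maclaurin's inequality for the elementary symmetric-type count, or the bound $\sum_{\text{tri}}\le \binom{k}{3}\bigl(\sum x^2/\binom{k}{2}\bigr)^{3/2}$, follows from Kruskal--Katona-style power means.

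I expect this optimization to be the real obstacle. I will first try the power-mean chain: apply AM--GM within each triangle, $\prod\le (\text{mean of squares})^{3/2}$, sum the results, and then apply Jensen in the concave direction, using that $x\mapsto x^{3/2}$ composed with averaging is handled via H\"older $\sum a_T^{3/2}\le(\sum a_T)^{3/2}\cdot(\#T)^{-1/2}$. That inequality is false in general, so if it fails I will fall back on Lagrange multipliers directly.
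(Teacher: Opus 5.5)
\textbf{Step 2 is a genuine gap.} Your Step 1 is fine: Loomis--Whitney, or Shearer's inequality $2H(X,Y,Z)\le H(X,Y)+H(Y,Z)+H(X,Z)$ for a uniformly random triangle, does give $t_{ijl}\le\sqrt{m_{ij}m_{jl}m_{il}}$. But in Step 2 you never prove
\[
\sum_{i<j<l}x_{ij}x_{jl}x_{il}\le\binom{k}{3}\Bigl(\frac{m}{\binom{k}{2}}\Bigr)^{3/2}\qquad\text{for } x_{ij}=\sqrt{m_{ij}}\ge 0,\ \sum_{i<j}x_{ij}^2=m,
\]
and none of the routes you list gets there.

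\begin{itemize}
\item Per-triple AM--GM is fatal, not merely ``the wrong direction.'' If all edges go between $V_1$ and $V_2$, the left side is $0$. The AM--GM bounds, however, sum to $(k-2)(m/3)^{3/2}$, which exceeds the target for every $k\ge 4$. So no later step can recover the bound.
\item Maclaurin's inequality applies only when the weights have product form $x_{ij}=\sqrt{a_ia_j}$, which yours need not.
\item The claim that a cubic form with nonnegative coefficients is maximized on the sphere at the symmetric point is false in general: $x_1^3+x_2^3$ on the unit circle is maximized at $(1,0)$.
\item Lagrange multipliers only locate critical points. The feasible region has boundary faces $x_{ij}=0$ carrying other critical points, e.g.\ uniform weights on a sub-clique $K_{k'}$. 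You would still need a global comparison, and that comparison is the entire content of the lemma.
\end{itemize}

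\textbf{How to close it.} The spectral idea you discarded works if you keep the dimension. The matrix $X$ is $k\times k$ with zero diagonal, so its eigenvalues satisfy $\sum_i\lambda_i=0$ and $\sum_i\lambda_i^2=2m$. By Cauchy--Schwarz, $\lambda_i^2\le(k-1)(2m-\lambda_i^2)$, so $\lambda_i\le a:=\sqrt{2m(k-1)/k}$. Set $b:=a/(k-1)$ and sum $(\lambda_i-a)(\lambda_i+b)^2\le 0$ over $i$. This gives $\operatorname{tr}(X^3)\le (a-2b)\cdot 2m+kab^2=\frac{k(k-2)}{(k-1)^2}a^3$. One sixth of this is exactly $\binom{k}{3}(m/\binom{k}{2})^{3/2}$, since $\sum_{i<j<l}x_{ij}x_{jl}x_{il}=\frac16\operatorname{tr}(X^3)$.

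\textbf{Comparison with the paper.} The paper avoids both Loomis--Whitney and any optimization over the $m_{ij}$. For $v\in A_i$ it bounds $T_v$ in two ways:
\begin{itemize}
\item by $\frac{k-2}{2(k-1)}\deg(v)^2$, counting pairs of neighbours in distinct parts;
\item by $m-m_i$, since each edge missing $A_i$ closes at most one triangle with $v$.
\end{itemize}
Writing $T_v=\sqrt{T_v}\sqrt{T_v}$ and using one bound on each factor gives $T_i\le\sqrt{\tfrac{k-2}{2(k-1)}}\,m_i\sqrt{m-m_i}$. It then finishes with Jensen for the concave map $x\mapsto x\sqrt{m-x}$ under $\sum_i m_i=2m$. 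The same argument, run on the weighted $K_k$ (Cauchy--Schwarz at each index $i$ with $M_i=\sum_j m_{ij}$, then the same Jensen step), would also prove your Step 2.
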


\begin{proof}
    Label the vertex partition of the graph as $A_1,A_2,...,A_k$, and let $T$ be the total number of triangles in the graph. Also define $T_i$ to be the number of triangles involving a vertex from $A_i$, and let $T_v$ be the number of triangles involving a particular vertex $v$. 
    Then $3T = \sum_{i\in [k]} T_i$ and $T_i = \sum_{v\in A_i} T_v$. 

    Let $v\in A_i$. 
    Since every triangle containing $v$ contains two neighbors of $v$ in distinct parts, we have $$T_v\le \sum_{\{j,j'\}\in\binom{[k]\setminus\{i\}}{2}} |N(v) \cap A_j| \cdot |N(v) \cap A_{j'}| \le \binom{k-1}{2}\left( \frac{\deg(v)}{k-1} \right)^2.$$
    Hence, 
    \begin{align*}
        \sqrt{T_v} &\le \sqrt{ \frac{k-2}{2(k-1)}}\deg(v).
    \end{align*}

    Let $m_i$ be the number of edges incident to a vertex in $A_i$. Thus $m_i = \sum_{v\in A_i} deg(v)$. Since every edge not incident to a vertex of $A_i$ is in at most one triangle containing $v\in A_i$, we have $T_v \le m-m_i$ for every $v\in A_i$. Thus $\sqrt{T_v} \le \sqrt{m-m_i}$.

    We can use our two bounds on $\sqrt{T_v}$ to bound the value of $T_i$ as follows:
    \begin{align*}
        T_i &= \sum_{v\in A_i} T_v = \sum_{v\in A_i} \sqrt{T_v} \sqrt{T_v} \le \sqrt{m-m_i} \sqrt{ \frac{k-2}{2(k-1)}} m_i.
    \end{align*}
    From here, we use $T = \frac13 \sum_{i\in [k]} T_i$ to bound $T$.
    Since the function $x\mapsto x\sqrt{m-x}$ is concave on $[0, m]$ and $\sum_{i=1}^k m_i =2m$, Jensen's Inequality implies that $$ \sum_{i=1}^k  m_i \sqrt{m-m_i} \le \sum_{i=1}^k \left( \frac{2m}{k}\right) \sqrt{m - \frac{2m}{k}} = 2m \sqrt{m-\frac{2m}{k}}.$$

    Thus, 
    \begin{align*}
        T &= \frac13 \sum_{i\in [k]} T_i 
        \le \frac13 \sqrt{ \frac{k-2}{2(k-1)}} \sum _{i\in [k]}\sqrt{m-m_i}  m_i 
        \le \frac{2m}{3}  \sqrt{ \frac{k-2}{2(k-1)}} \sqrt{m-\frac{2m}{k}}
        = \binom{k}{3} \sqrt{ \frac{m}{\binom{k}{2}}}^3 
    \end{align*}
    whence we get the desired result. 
\end{proof}

Note that Lemma \ref{lem: trianglecount1} implies that a $3$-partite graph with $m$ edges has at most $(m/3)^{3/2}$ triangles. 

\begin{lemma} \label{lem: trianglecount2}
    A tripartite graph $G$  has at most $\frac{1}{12} \sum_{v\in V(G)} \deg(v)^2$ triangles.
\end{lemma}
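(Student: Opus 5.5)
The plan is to count triangles edge by edge rather than vertex by vertex. Let the parts be $A_1,A_2,A_3$ and let $T$ be the number of triangles. Each triangle contains exactly one edge between each pair of parts. So, for instance,
$$T=\sum_{uv\in E(A_1,A_2)} |N(u)\cap N(v)\cap A_3|.$$
The common neighbourhood satisfies $|N(u)\cap N(v)\cap A_3|\le \min(\deg_{A_3}(u),\deg_{A_3}(v))$, where $\deg_{A_j}(x)$ denotes the number of neighbours of $x$ in $A_j$. We bound this minimum by the arithmetic mean, $\tfrac12(\deg_{A_3}(u)+\deg_{A_3}(v))$. Applying the same identity to all three pairs of parts and averaging gives
$$3T\le \sum_{\{i,j\}}\ \sum_{uv\in E(A_i,A_j)}\tfrac12\bigl(\deg_{A_\ell}(u)+\deg_{A_\ell}(v)\bigr),$$
where $\ell$ is the third index.

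Next we regroup this sum by vertex. Fix $u\in A_1$ and write $a=\deg_{A_2}(u)$ and $b=\deg_{A_3}(u)$. The terms involving $u$ come from two sources. Each of the $a$ edges from $u$ to $A_2$ contributes $\tfrac12 b$, and each of the $b$ edges from $u$ to $A_3$ contributes $\tfrac12 a$. The total contribution of $u$ is therefore $\tfrac12(ab+ba)=ab$. This gives
$$3T\le \sum_{v} a_v b_v,$$
where $a_v,b_v$ are the degrees of $v$ into the two other parts. Finally, AM--GM gives $a_vb_v\le (a_v+b_v)^2/4=\deg(v)^2/4$. Combining these, $T\le \frac1{12}\sum_v \deg(v)^2$, as required.

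An alternative check is the per-vertex bound $T_v\le a_vb_v$ (a triangle through $v$ is determined by its two other vertices). Summed over all vertices this gives $3T=\sum_v T_v\le\sum_v a_vb_v$ directly, without the min and mean step. I would present this shorter route, noting that every triangle is counted three times, once at each vertex, in $\sum_v T_v$. No step is a serious obstacle; the only care needed is to verify that tripartiteness is what ensures each triangle through $v$ uses one neighbour in each of the other two parts, which justifies $T_v\le a_vb_v$.
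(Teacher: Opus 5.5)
Your proposal is correct, and the shorter route you say you would present is exactly the paper's proof: for $v$ in one part, $T_v \le |N(v)\cap A_j|\cdot|N(v)\cap A_{j'}| \le (\deg(v)/2)^2$ by AM--GM, and then $T=\frac13\sum_v T_v$. Your edge-by-edge computation is also valid, but its regrouping lands on the same intermediate inequality $3T\le\sum_v a_vb_v$, so dropping it loses nothing.
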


\begin{proof}
    Label the vertex partition of the graph as $A_1,A_2,A_3$, and let $T_v$ be the number of triangles involving a particular vertex $v\in V(G)$. Then $3T = \sum_{v\in V(G)} T_v$. 

    For every vertex $v\in A_i$, since every triangle containing $v$ contains two neighbors of $v$ in distinct parts, we have $$ T_v \le |N(v) \cap A_j|\cdot |N(v) \cap A_{j'}| \le \left( \frac{\deg(v)}{2}\right)^2,$$
    where ${\{j,j'\}=[3]\setminus\{i\}}$.
    Thus $T = \frac13 \sum_{v\in V(G)} T_v \le \frac1{12} \sum_{v\in V(G)} \deg(v)^2$ as desired.
\end{proof}

\begin{fact}\label{lem: sums}
    For any real numbers $r_1,r_2,...,r_n \in [a,b]$, where $a\le b$ are real numbers, we have $\sum_{i=1}^n r_i^2 \le (a+b) (\sum_{i=1}^n r_i) - nab$.
\end{fact}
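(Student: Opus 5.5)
The plan is to reduce the statement to a termwise inequality. Each $r_i$ lies in $[a,b]$, so both $r_i - a \ge 0$ and $b - r_i \ge 0$. Hence their product is nonnegative:
$$(r_i - a)(b - r_i) \ge 0 \quad \text{for each } i\in[n].$$

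Expanding the product gives
$$(r_i-a)(b-r_i) = (a+b)r_i - r_i^2 - ab,$$
so the inequality above becomes
$$r_i^2 \le (a+b)r_i - ab.$$

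Summing this over $i=1,\dots,n$ yields exactly
$$\sum_{i=1}^n r_i^2 \le (a+b)\sum_{i=1}^n r_i - nab.$$

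There is no real obstacle here. The only point to keep straight is the sign when expanding the product, and it is the nonnegativity of each factor that drives the bound.
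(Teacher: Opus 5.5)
Your proof is correct and is the same as the paper's. The paper also starts from $(r_i-a)(b-r_i)\ge 0$, expands it to $(a+b)r_i - r_i^2 - ab \ge 0$, and sums over $i$.
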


\begin{proof}
    Using the following inequality $$0\le (r_i-a)(b-r_i) = (a+b)r_i - r_i^2 - ab,$$
    we get $\sum_{i=1}^n r_i^2 \le (a+b) (\sum_{i=1}^n r_i) - nab$.
\end{proof}

The next two lemmas will aid in determining what structure in a hypergraph will optimize the parameter $\theb$. Given $e\in E(\cH)$, we will be interested in the structure of $N_{L(\cH)}(e)$ which makes $\theb_{L(\cH), kD}(e)$ large, or the related goal to make $P_{L(\cH)}(e)$ small. In context, we will usually have $a_{i,j}$ in the lemmas denote the codegree between vertices $v_i\in e$ and $x_j\notin e$. But for this section, the next two lemmas are stated generally, as applications of other common algebraic inequalities.

\begin{fact}\label{lem: ops1}
Let $k\in \mathbb{N}$. If $a_1,...,a_k\in \mathbb{R}$, then $$\sum_{1\le i < j \le k} a_ia_j=\frac{k-1}{2k} \left(\sum_{i=1}^k a_i \right)^2-\frac{1}{2k} \sum_{1\le i < j \le k} (a_i-a_j)^2 \le \frac{k-1}{2k} \left(\sum_{i=1}^k a_i \right)^2.$$
\end{fact}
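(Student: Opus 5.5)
The plan is to prove the identity by direct algebraic expansion; the inequality then follows immediately. Write $S=\sum_{i=1}^k a_i$ and $Q=\sum_{i=1}^k a_i^2$, and let $E=\sum_{1\le i<j\le k} a_ia_j$ denote the left-hand side. Expanding the square of the sum gives the standard relation
\[
S^2 = Q + 2E .
\]

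Next, expand the sum of squared differences. Each index $i$ appears in exactly $k-1$ of the unordered pairs $\{i,j\}$, so
\[
\sum_{1\le i<j\le k}(a_i-a_j)^2 = (k-1)Q - 2E .
\]
Substituting $Q = S^2-2E$ turns this into $(k-1)S^2 - 2(k-1)E - 2E = (k-1)S^2 - 2kE$.

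Dividing by $2k$ and rearranging gives
\[
E=\frac{k-1}{2k}S^2-\frac{1}{2k}\sum_{i<j}(a_i-a_j)^2,
\]
which is the claimed identity. The inequality follows at once, because the subtracted term is a nonnegative sum of squares and $k\ge 1$.

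There is no real obstacle here. The only care needed is counting the multiplicity $k-1$ correctly and handling the degenerate case $k=1$, where both sides are zero.
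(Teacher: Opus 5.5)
Your proof is correct and follows the same route as the paper, which simply calls the identity trivial and then drops the nonnegative sum of squares. You additionally write out the expansion, combining $S^2=Q+2E$ with $\sum_{i<j}(a_i-a_j)^2=(k-1)Q-2E$, which makes the ``trivial'' equality explicit.
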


\begin{proof}
    The equality is trivial, and the inequality follows since $(a_i-a_j)^2 \ge 0$ for all $i$ and $j$.
\end{proof}

\begin{lemma} \label{lem: tableops redo}
  Let $k, n \in \mathbb{N}$ and $C, T \in \mathbb{R}^{>0}$, and let $a_{i,j} \in \mathbb{R}^{\ge 0}$ for every $i \in [k]$ and $j \in [n]$. If $\sum_{i=1}^{k} a_{i,j} \le C$ for every  $j \in [n]$ and $\sum_{i=1}^{k} \sum_{j=1}^{n} a_{i,j} \le T$, then
\[ \sum_{j=1}^{n} \sum_{1 \le i < i' \le k} a_{i,j} \cdot a_{i',j} \le \frac{k-1}{2k}(qC^2 + r^2) \le \frac{k-1}{2k} TC, \text{ where }  q= \lfloor T/C \rfloor \text{ and } r= T - qC.\] 
Moreover, if there exist distinct $j_1, j_2 \in [n]$ and $i^* \in [k]$ such that 
\[\left|\sum_{i \in [k] \setminus \{i^*\}} (a_{i,j_1} + a_{i,j_2})- 2\left(1 - \frac{1}{k}\right)C\right|\ge \delta \text{ for some } \delta > 0,\] then
\begin{equation*}
    \sum_{j=1}^{n} \sum_{1 \le i < i' \le k} a_{i,j} a_{i',j} \le \frac{k-1}{2k}nC^2 - \frac{k\delta^2}{4(k-1)}.
\end{equation*}
\end{lemma}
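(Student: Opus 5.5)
The plan is to reduce everything to column sums and then use the equality form of Fact~\ref{lem: ops1}. For each $j\in[n]$, write $s_j:=\sum_{i=1}^k a_{i,j}$. Then $0\le s_j\le C$ and $\sum_j s_j\le T$.

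\textbf{First bound.} By Fact~\ref{lem: ops1}, each column satisfies $\sum_{1\le i<i'\le k} a_{i,j}a_{i',j}\le \frac{k-1}{2k}s_j^2$. So it suffices to show
\[\sum_j s_j^2\le qC^2+r^2, \qquad q=\lfloor T/C\rfloor,\quad r=T-qC.\]

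1. Increasing some $s_j$ can only increase $\sum_j s_j^2$. So we may assume $\sum_j s_j=T$. If the constraint $s_j\le C$ with $nC<T$ makes this impossible, then the sum is trivially at most $nC^2\le qC^2$.
2. The map $(s_j)\mapsto\sum_j s_j^2$ is convex, and the feasible region $\{0\le s_j\le C,\ \sum_j s_j=T\}$ is a polytope. Hence the maximum is attained at a vertex.
3. At a vertex, at most one coordinate lies strictly between $0$ and $C$. Equivalently, one can use a smoothing argument: replacing $(s_j,s_{j'})$ by $(s_j+\eta,s_{j'}-\eta)$ when $s_j\ge s_{j'}$ does not decrease the sum of squares.
4. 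Such a vertex consists of $q$ entries equal to $C$, one entry equal to $r$, and zeros. This gives $\sum_j s_j^2\le qC^2+r^2$.
5. Since $0\le r<C$, we have $r^2\le rC$, so $qC^2+r^2\le (qC+r)C=TC$.

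\textbf{Moreover part.} Here I compare each column to $\frac{k-1}{2k}C^2$ directly. Fix a column $j$, and set $y:=a_{i^*,j}$ and $u:=\sum_{i\ne i^*}a_{i,j}$.

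1. Applying Fact~\ref{lem: ops1} to the $k-1$ entries other than $i^*$ gives
\[\sum_{i<i'} a_{i,j}a_{i',j}\le yu+\frac{k-2}{2(k-1)}u^2.\]
2. Since $y\le C-u$, this is at most
\[f(u):=(C-u)u+\frac{k-2}{2(k-1)}u^2 = Cu-\frac{k}{2(k-1)}u^2.\]
3. Completing the square,
\[f(u)=\frac{k-1}{2k}C^2-\frac{k}{2(k-1)}\bigl(u-c\bigr)^2, \qquad c:=\tfrac{k-1}{k}C.\]

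This yields a per-column bound with an explicit quadratic deficit in $u$, and it is the key step.

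We now sum over columns.

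1. Every column $j\notin\{j_1,j_2\}$ contributes at most $\frac{k-1}{2k}C^2$.
2. For $j_1,j_2$, write $u_1,u_2$ for the corresponding values of $u$. The hypothesis says $|u_1+u_2-2c|\ge\delta$.
3. By the inequality $x^2+y^2\ge (x+y)^2/2$, the combined deficit of these two columns is at least
\[\frac{k}{2(k-1)}\bigl[(u_1-c)^2+(u_2-c)^2\bigr]\ge\frac{k}{2(k-1)}\cdot\frac{(u_1+u_2-2c)^2}{2}\ge \frac{k\delta^2}{4(k-1)}.\]
4. Adding all columns gives $\sum_j\sum_{i<i'}a_{i,j}a_{i',j}\le\frac{k-1}{2k}nC^2-\frac{k\delta^2}{4(k-1)}$, as claimed.

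The main obstacle I expect is singling out the coordinate $i^*$. The naive bound $\frac{k-1}{2k}s_j^2$ does not see the deviation in $u$, so instead I split off $a_{i^*,j}$ and bound it by $C-u$, which turns the column bound into a concave quadratic in $u$ maximized exactly at $u=c$.
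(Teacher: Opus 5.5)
Your proposal is correct and follows essentially the same route as the paper. For the first bound, the paper uses Karamata's inequality, with $(C,\dots,C,r,0,\dots,0)$ majorizing the column sums, which is equivalent to your extreme-point/smoothing argument; for the ``moreover'' part, the paper likewise splits off row $i^*$, bounds $a_{i^*,j}\le C-u$, applies Fact~\ref{lem: ops1} to the remaining $k-1$ entries to get $Cu-\frac{k}{2(k-1)}u^2$, and then combines the two columns via the identity $S_\alpha^2+S_\beta^2=\frac12(S_\alpha+S_\beta)^2+\frac12(S_\alpha-S_\beta)^2$, which is exactly your completion of the square followed by $x^2+y^2\ge (x+y)^2/2$.
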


\begin{proof}

    Without loss of generality, suppose the indices $j\in [n]$ are ordered such that $\sum_{i=1}^k a_{i,1} \ge \sum_{i=1}^k a_{i,2} \ge \cdots \ge \sum_{i=1}^k a_{i,n}$. We may also assume that $C\le T$ and $\sum_{i=1}^k \sum_{j=1}^n a_{i,j} =T$.
    
    Let us use Karamata's inequality applied to the real-valued convex function $f(x)=x^2$.  Define a sequence $Y:=(y_j)_{j=1}^n$ where $y_j = C$ for $j\in [q]$, and $y_{q+1}=r$, and all remaining terms (if any) are zero. In order to apply the inequality, we first show that the sequence $Y$ majorizes the sequence $(\sum_{i=1}^k a_{i,j})_{j=1}^n$.  
    
    Indeed, because $\sum_{i=1}^k a_{i,j} \le C$ for every $j\in [n]$, and because $\sum_{i=1}^k \sum_{j=1}^n a_{i,j} \le qC +r$ by choice of $q$ and $r$, we have $\sum_{i=1}^k \sum_{j=1}^{\ell} a_{i,j} \le \sum_{j=1}^{\ell} y_j$ for each $\ell\in [n]$. Thus, Karamata's inequality implies that $\sum_{j=1}^n f(\sum_{i=1}^k a_{i,j}) \le \sum_{j=1}^n f(y_j)$, that is, $$\sum_{j=1}^n \left(\sum_{i=1}^k a_{i,j}\right)^2 \le qC^2 + r^2. $$
    This, together with Fact \ref{lem: ops1}, implies
    \begin{equation*}
        \sum_{j=1}^n \sum_{1\le i < i' \le k} a_{i,j}a_{i',j} 
        \le \frac{k-1}{2k}\sum_{j=1}^n \left(\sum_{i=1}^k a_{i,j}\right)^2 
        \le \frac{k-1}{2k}(qC^2+r^2), 
    \end{equation*}
    as desired. In addition, since $r \leq C$, we have
    \begin{equation*}
      \frac{k-1}{2k}(qC^2+r^2) \le \frac{k - 1}{2k}(qC^2 + rC) = \frac{k-1}{2k} TC.
    \end{equation*}

    For the ``moreover'' part, without loss of generality, we assume that $j_1 = n - 1$, $j_2 = n$, and $i^* = k$.
    For notational convenience, we let $\alpha_i = a_{i,n-1}$ and $\beta_i = a_{i,n}$ for each $i \in [k-1]$, we let $A = a_{k,n-1}$ and $B = a_{k,n}$, and we let $S_\alpha = \sum_{i=1}^{k-1}\alpha_i$ and $S_\beta = \sum_{i=1}^{k-1}\beta_i$.
    Note that
    \begin{equation*}
      \sum_{j=1}^n\sum_{1 \leq i < i' \leq k}a_{i,j}a_{i',j} = \sum_{j=1}^{n-2}\sum_{1 \leq i < i' \leq k}a_{i,j}a_{i',j} + \sum_{1 \leq i < i' \leq k - 1}(\alpha_i \alpha_{i'} + \beta_i \beta_{i'}) + \sum_{i=1}^{k-1}(A\alpha_i + B\beta_i).
    \end{equation*}
    By Fact \ref{lem: ops1},
    we have $\sum_{j=1}^{n-2}\sum_{1 \leq i < i' \leq k}a_{i,j}a_{i',j} \leq (n - 2)\frac{k-1}{2k}C^2$, so to finish the proof, it suffices to show that
    \begin{equation*}
      \sum_{1 \leq i < i' \leq k - 1}(\alpha_i \alpha_{i'} + \beta_i \beta_{i'}) + \sum_{i=1}^{k-1}(A\alpha_i + B\beta_i) \leq 2C^2\frac{k - 1}{2k} - \frac{k\delta^2}{4(k-1)}.
    \end{equation*}
    To that end, we let
    \begin{equation*}
      E := 2C^2\frac{k-1}{2k}-\frac{k\delta^2}{4(k-1)}-\Big(\sum_{1\le i<i'\le k-1}(\alpha_i\alpha_{i'}+\beta_i\beta_{i'})+\sum_{i=1}^{k-1}(A\alpha_i+B\beta_i)\Big),
    \end{equation*}
    and we show that $E \geq 0$.
    
    By assumption, $\delta \leq |(S_\alpha + S_\beta) - \frac{2(k-1)}{k}C|$, so by squaring both sides of this inequality, we have
    \begin{equation}
      \label{eq:2.1}
      \frac{k}{4(k - 1)}\delta^2 \leq \frac{k}{4(k-1)}(S_\alpha + S_\beta)^2 - C(S_\alpha + S_\beta) + \frac{k - 1}{k}C^2.
    \end{equation}
    By Fact~\ref{lem: ops1}  again, since $A \leq C - S_\alpha$,
    \begin{align}
      \sum_{1\le i<i'\le k-1}\alpha_i\alpha_{i'}+\sum_{i=1}^{k-1}A\alpha_i
      &=\frac{k-2}{2(k-1)}S_\alpha^2 - \frac{1}{2(k-1)}\sum_{i<i'}(\alpha_i - \alpha_{i'})^2 + A S_\alpha \nonumber\\
      &\le \frac{k-2}{2(k-1)}S_\alpha^2- \frac{1}{2(k-1)}\sum_{i<i'}(\alpha_i - \alpha_{i'})^2+(C - S_\alpha) S_\alpha \nonumber\\
      &\le CS_\alpha-\frac{k}{2(k-1)}S_\alpha^2 - \frac{1}{2(k-1)}\sum_{i<i'}(\alpha_i - \alpha_{i'})^2 \nonumber \\
      &\le CS_\alpha-\frac{k}{2(k-1)}S_\alpha^2. \label{eq:2.3}
    \end{align}
    Similarly, since $B \leq C - S_\beta$,
    \begin{align}\label{eq:2.4}
      \sum_{1\le i<i'\le k-1}\beta_i\beta_{i'}+\sum_{i=1}^{k-1}B\beta_i 
      & \le CS_\beta-\frac{k}{2(k-1)}S_\beta^2 - \frac{1}{2(k-1)}\sum_{i<i'}(\beta_i - \beta_{i'})^2 \nonumber \\
      & \le CS_\beta-\frac{k}{2(k-1)}S_\beta^2. 
    \end{align}
    Combining \eqref{eq:2.1}, \eqref{eq:2.3}, and \eqref{eq:2.4}, we have
    \begin{align*}E&\ge \frac{k-1}{k}C^2 - \frac{k\delta^2}{4(k-1)} - C(S_\alpha + S_\beta) + \frac{k}{2(k-1)}(S_\alpha^2 + S_\beta^2)\\
	&= \frac{k-1}{k}C^2 - \frac{k\delta^2}{4(k-1)} - C(S_\alpha + S_\beta) + \Big(\frac{k}{4(k-1)}(S_\alpha + S_\beta)^2 + \frac{k}{4(k-1)}(S_\alpha - S_\beta)^2\Big)\\
	&= \Big(\frac{k-1}{k}C^2 - C(S_\alpha + S_\beta) + \frac{k}{4(k-1)}(S_\alpha + S_\beta)^2\Big) - \frac{k\delta^2}{4(k-1)} + \frac{k}{4(k-1)}(S_\alpha - S_\beta)^2\\
&\ge\frac{k}{4(k-1)}(S_\alpha - S_\beta)^2\ge 0,
\end{align*}
    as desired.
\end{proof}

The \textit{permanent} of an $n\times n$ matrix $M=(m_{i,j})$ is defined as $$\operatorname{perm}(M):= \sum_{\sigma \in S_n} \prod_{i=1}^n m_{i, \sigma(i)}$$ where $S_n$ is the symmetric group, that is, the group of all permutations of $[n]$.

The following lemma involving the permanent of $3\times n$ matrix will be useful for bounding the number of independent triples for an edge in a $3$-uniform $2$-simple hypergraph with few independent pairs.

\begin{lemma}
\label{lem:3xn-matrix-bound}
Let $n \geq 3$, let $A = (a_{i,j})_{i\in[3],j\in[n]}$ be a $3 \times n$ matrix with entries $a_{i,j} \in [0,1]$, and let $c_j := \sum_{i=1}^3 a_{i,j} \leq 1$ for each $j \in [n]$.
If  $\sum_{j=1}^n c_j \leq 3$ and $1 \geq c_1 \geq c_2 \geq \cdots \geq c_n$, then
\begin{equation*}
  \frac{1}{9}\sum_{j=1}^{n} \sum_{1 \leq i < i' \leq 3} a_{i,j}a_{i',j}
  + \frac{1}{27} \operatorname{perm}(A_{[3]})
  - \frac{1}{27} \sum_{i=1}^{3}\sum_{j=1}^{3} a_{i,j} \leq \frac{2}{3^5},
\end{equation*}
where $A_{[3]}$ denotes the leading $3 \times 3$ submatrix of $A$.
\end{lemma}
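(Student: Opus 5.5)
The plan is to isolate the leading $3\times 3$ block, bound everything else by column sums, and reduce to a finite-dimensional optimization whose maximizer is the uniform matrix. First, the equality case guides the plan. Take $n=3$ and $a_{i,j}=1/3$ for all $i,j$, so every $c_j=1$. Then the pair term is $\frac19\cdot 3\cdot\frac13=\frac19$, the permanent term is $\frac1{27}\cdot 6\cdot\frac1{27}=\frac{2}{3^5}$, and the linear term is $-\frac1{27}\cdot 3=-\frac19$. So the bound $\frac{2}{3^5}$ is attained, and the goal is to show the uniform configuration is the global maximizer.

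\textbf{Step 1 (tail columns).} Set $s:=c_1+c_2+c_3\le 3$. For $j\ge 4$, Fact~\ref{lem: ops1} with $k=3$ gives $\sum_{i<i'}a_{i,j}a_{i',j}\le \frac13 c_j^2\le \frac13 c_3c_j$. Since $\sum_{j\ge4}c_j\le 3-s$, the tail contributes at most $\frac1{27}c_3(3-s)$. (Alternatively, apply Lemma~\ref{lem: tableops redo} to the tail with $C=c_3$ and $T=3-s$.) This reduces the problem to the $3\times3$ block $B=A_{[3]}$ with column sums $c_1\ge c_2\ge c_3$. We must show
\[
\Phi(B):=3\sum_{j\le3}e_2(B_{\cdot j})+\operatorname{perm}(B)-\sum_{j\le 3}c_j+c_3(3-s)\le \tfrac29 ,
\]
where $e_2$ is the second elementary symmetric function of a column. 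Note $\Phi$ is $27$ times the left-hand side of the lemma, after the tail bound.

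\textbf{Step 2 (optimize within the block, fixed column sums).} Fix $c_1,c_2,c_3$ and vary the entries of one column $j$ inside the simplex $\{a_{\cdot j}\ge0,\ \sum_i a_{i,j}=c_j\}$. On this simplex, $e_2(a_{\cdot j})=\frac12(c_j^2-\sum_i a_{i,j}^2)$ is strictly concave, and $\operatorname{perm}(B)$ is linear in the column. Hence the maximizer for each column is the Euclidean projection of a uniform-plus-linear-shift point, and it is determined by KKT conditions. The linear coefficient of $a_{i,j}$ in the permanent is the $2\times2$ minor permanent $\operatorname{perm}(B^{(i,j)})$, which is at most $c_{j'}c_{j''}$ for the other two columns. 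This is small compared with the concave term's curvature $3$, since all $c\le1$.

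From this I expect to show that at the optimum each column is nearly uniform. Concretely, I would write $a_{i,j}=c_j/3+x_{i,j}$ with $\sum_i x_{i,j}=0$. Expanding gives
\[
3\sum_j e_2=\sum_j c_j^2-\tfrac32\sum_{i,j}x_{i,j}^2 ,
\]
while $\operatorname{perm}(B)=\frac{6}{27}c_1c_2c_3+Q(x)+R(x)$. Here $Q$ is quadratic and $R$ cubic in $x$; the linear part vanishes because the columns of $x$ sum to zero. The aim is to bound $|Q|+|R|\le \frac32\sum x^2$ using $|x_{i,j}|\le c_j\le1$, so that the deviation never helps. This reduces $\Phi$ to
\[
g(c)=\sum c_j^2+\tfrac{6}{27}c_1c_2c_3-s+c_3(3-s).
\]

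\textbf{Step 3 (scalar optimization).} Maximize $g$ over $1\ge c_1\ge c_2\ge c_3\ge0$ with $s\le3$. The function $g$ is convex in each $c_j$ separately for $j=1,2$, so those coordinates are pushed to the endpoints $\{c_3,1\}$. The remaining one-variable checks should give the maximum $\frac29$ at $c_1=c_2=c_3=1$. For instance, when all three are equal to $c$, the expression is $3c^2+\frac{6}{27}c^3-3c+3c-3c^2=\frac{6}{27}c^3\le\frac29$.

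The main obstacle is Step 2. The bound $\operatorname{perm}\le\prod c_j$ is far too weak: the identity-like matrix shows that the permanent can be large exactly when the pair terms vanish. So the quadratic and cubic error terms $Q$ and $R$ must be controlled sharply against the $-\frac32\sum x^2$ gain. If a direct estimate fails, I would fall back on a KKT and boundary case analysis over which entries vanish, exploiting multilinearity of the permanent in the columns.
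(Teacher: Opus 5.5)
Your outline is the paper's. The tail estimate $\sum_{j\ge4}c_j^2\le c_3(3-s)$ is the first step of Lemma~\ref{lem:sum-of-squares-minus-S}. Your Step~2 target $3\sum_{j\le3}e_2(B_{\cdot j})+\operatorname{perm}(B)\le\sum_{j\le3}c_j^2+\frac29c_1c_2c_3$ is exactly Lemma~\ref{lem:square}, and your $g$ is the paper's $f(c_1,c_2,c_3)$.

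The real difference is how you prove the $3\times3$ inequality, which you left as an ``aim'' with a KKT fallback. The paper fixes the column sums, takes a maximizer, and smooths it. Shifting $\varepsilon$ between the two same-column entries with the largest spread $d$ (over all columns) gains $3\varepsilon d-3\varepsilon^2$ in the pair term and loses at most $2\varepsilon d$ in the permanent, so every column of a maximizer is constant.

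Your expansion about $c_j/3$ also works, and needs no sharp control at all. Since $x_{i,j}\in[-c_j/3,2c_j/3]$, we have $|x_{i,j}|\le1$.
\begin{itemize}
\item From $Q=-\sum_{\{j,j'\}}\frac{c_{j''}}{3}\sum_i x_{i,j}x_{i,j'}$, $c_{j''}\le1$, and AM--GM, you get $|Q|\le\frac13\sum_{i,j}x_{i,j}^2$.
\item $R=\operatorname{perm}(x)$ satisfies $|R|\le\frac23\sum_{i,j}x_{i,j}^2$. Indeed, $|abc|\le\min(|ab|,|bc|,|ca|)\le\frac13(a^2+b^2+c^2)$ when $|a|,|b|,|c|\le1$, and each entry occurs in exactly two of the six permutation terms.
\end{itemize}
Hence
\[
3\sum_{j\le3}e_2(B_{\cdot j})+\operatorname{perm}(B)\le \sum_{j\le3}c_j^2+\tfrac29c_1c_2c_3-\tfrac12\sum_{i,j}x_{i,j}^2 .
\]
This is a global, quantitative (stability) form of Lemma~\ref{lem:square}, and it avoids the compactness/maximizer argument. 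Compared with the paper's purely first-order smoothing comparison, it gives an explicit deficit. The column-by-column KKT discussion is unnecessary. It also would not by itself force constant columns, since the linear coefficients (the $2\times2$ minor permanents) differ by row unless the other columns are already constant.

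Step~3 closes as you suggest, though note the endpoints for $c_1$ are $c_2$ and $1$, not $c_3$ and $1$.
\begin{itemize}
\item $c_1=1$ gives a convex function of $c_2\in[c_3,1]$ with endpoint values $\frac29c_3^2$ and $\frac29c_3$.
\item $c_1=c_2=y$ gives a function linear in $c_3$ with nonnegative slope, maximized at $\frac29y^3$.
\end{itemize}
The paper's monotonicity route is shorter: first set $c_3=c_2$ by linearity with nonnegative coefficient, then $c_2=c_1$ since the result is increasing in $c_2$.
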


First let's prove the case when it's a $3 \times 3$ matrix, for a fixed value of the column sums. Note that equality is achieved when all entries equal $1/3$.

\begin{lemma}\label{lem:square}
  Let $A = (a_{i,j})_{i,j\in[3]}$ be a $3 \times 3$ matrix with entries $a_{i,j} \in [0,1]$, and let $c_j \coloneqq \sum_{i=1}^3 a_{i,j}$ for each $j \in [3]$. If $c_1,c_2,c_3 \leq 1$, then
  \begin{equation*}
      3\sum_{j=1}^{3} \sum_{1 \leq i < i' \leq 3} a_{i,j}a_{i',j}
      + \operatorname{perm}(A) \leq c_1^2 + c_2^2 + c_3^2 + \frac{2}{9}c_1c_2c_3.
  \end{equation*}
\end{lemma}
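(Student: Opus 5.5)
The plan is to change variables so that the equality case (all entries $1/3$, or more generally each column constant) becomes the origin, and then show that the deviation terms are controlled. First I rewrite the left-hand side. For each column $j$, $\sum_{i<i'} a_{i,j}a_{i',j} = \frac12\bigl(c_j^2 - \sum_i a_{i,j}^2\bigr)$. Also $3\sum_i a_{i,j}^2 - c_j^2 = \sum_{i<i'}(a_{i,j}-a_{i',j})^2$. Together these show that the claimed inequality is equivalent to
\begin{equation*}
  \operatorname{perm}(A) \le \frac{2}{9}c_1c_2c_3 + \frac12\sum_{j=1}^3\sum_{1\le i<i'\le 3}(a_{i,j}-a_{i',j})^2 .
\end{equation*}
Next I write $a_{i,j} = c_j/3 + d_{i,j}$, where each column vector $d_j=(d_{1,j},d_{2,j},d_{3,j})$ has zero sum. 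For zero-sum vectors, $\sum_{i<i'}(d_{i,j}-d_{i',j})^2 = 3\|d_j\|^2$, so the quadratic term on the right is $\frac32\sum_j\|d_j\|^2$.

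Then I expand $\operatorname{perm}(A)=\sum_\sigma\prod_j (c_j/3 + d_{\sigma(j),j})$ multilinearly and sort terms by the number of $d$-factors.
\begin{itemize}
\item The term with no $d$-factor is $6\cdot c_1c_2c_3/27 = \frac29 c_1c_2c_3$, which cancels exactly.
\item Terms with exactly one $d$-factor vanish after summing over $\sigma$, since each $d_{i,j}$ appears equally often and $\sum_i d_{i,j}=0$.
\item For the terms with two $d$-factors, in columns $j,j'$ with third column $j''$, the sum is $\frac{c_{j''}}{3}\sum_{i\ne i'}d_{i,j}d_{i',j'}$. Using the zero sums, this equals $-\frac{c_{j''}}{3}\langle d_j,d_{j'}\rangle$.
\item The term with three $d$-factors is $\operatorname{perm}(D)$.
\end{itemize}
So it suffices to prove
\begin{equation*}
  \operatorname{perm}(D) - \sum_{\{j,j'\}} \frac{c_{j''}}{3}\langle d_j,d_{j'}\rangle \le \frac32\sum_{j=1}^3\|d_j\|^2 .
\end{equation*}

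The quadratic part is easy. Since $c_{j''}\le 1$ and $|\langle d_j,d_{j'}\rangle|\le \frac12(\|d_j\|^2+\|d_{j'}\|^2)$, it is at most $\frac13\sum_j\|d_j\|^2$. What remains is to show $\operatorname{perm}(D)\le \frac76\sum_j\|d_j\|^2$. For this I use the box constraints. Since $0\le a_{i,j}\le c_j\le 1$, we get $d_{i,j}\in[-c_j/3,\,2c_j/3]\subseteq[-1/3,2/3]$. For a zero-sum $3$-vector one also has $\|d_j\|_\infty\le\sqrt{2/3}\,\|d_j\|$.

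I expand $\operatorname{perm}(D)$ along one column $j$: $\operatorname{perm}(D)=\sum_i d_{i,j}\operatorname{perm}(D^{(i,j)})$. Each $2\times2$ permanent of a minor is bounded by products of entries of the other two columns. By Cauchy--Schwarz these products give something like $\|d_j\|_\infty\cdot 2\|d_{j'}\|\|d_{j''}\|$. I then average this bound over the three choices of $j$ and apply AM--GM. This produces a cubic quantity in the norms that is dominated by the quadratic one, because every entry of $D$ is at most $2/3$ in absolute value.

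The main obstacle is making this cubic estimate sharp enough to fit under the constant $\frac76$. The crude bound $|\operatorname{perm}(D)|\le\sum_\sigma\prod_j|d_{\sigma(j),j}|$ may lose too much. If it does, I would exploit signs. With zero column sums, at most one entry per column can be as large as $2/3$, and in that case the other two entries are both negative and equal to $-1/3$. I would split into cases according to whether some column attains a large entry. In the extreme configurations, such as a permutation-matrix-like $A$, I would check directly that the inequality holds, and in the remaining case I would use the sharper norm bound above.
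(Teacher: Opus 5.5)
Your proposal is correct, and it takes a genuinely different route from the paper.

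\textbf{The paper's route.} The paper argues extremally. It fixes the column sums and takes a maximizer of the left-hand side. It then moves $\epsilon$ from the largest to the smallest entry of the column with the largest spread $\Delta$. The within-column quadratic term gains $3\epsilon\Delta-3\epsilon^2$, while the permanent, which is linear in that column, loses at most $2\epsilon\Delta$. Hence every column of a maximizer is constant.

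\textbf{Your route.} You instead expand exactly around the column-balanced matrix $a_{i,j}=c_j/3+d_{i,j}$. Your reduction checks out: the constant term cancels, the one-$d$ terms vanish, and the two-$d$ terms equal $-\frac{c_{j''}}{3}\langle d_j,d_{j'}\rangle$.

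\textbf{The step you flagged is not an obstacle.} You already have the bound $|\operatorname{perm}(D)|\le 2\|d_j\|_\infty\|d_{j'}\|\|d_{j''}\|$, which follows because $J-I$ has operator norm $2$ on $\mathbb{R}^3$. Combine it with $\|d_j\|_\infty\le 2/3$ and average over $j$:
\[
|\operatorname{perm}(D)|\le \tfrac49\sum_{\{j',j''\}}\|d_{j'}\|\|d_{j''}\|\le \tfrac49\sum_j\|d_j\|^2 .
\]
This is well below the $\tfrac76$ you need. The same argument bounds $\operatorname{perm}(|D|)$, so even the crude estimate you worried about suffices. The sign-based case analysis and the bound $\|d_j\|_\infty\le\sqrt{2/3}\,\|d_j\|$ are unnecessary.

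\textbf{What each approach buys.} Your route gives an explicit stability statement. The left-hand side is at most
\[
c_1^2+c_2^2+c_3^2+\tfrac29c_1c_2c_3-\tfrac{13}{18}\sum_j\|d_j\|^2 .
\]
This identifies the equality cases (constant columns) and quantifies the slack. The paper's smoothing argument is shorter and avoids the expansion bookkeeping. However, it relies on a compactness/maximizer argument, and it yields only the inequality itself. That argument implicitly needs the column sums held fixed, which the paper does not state.
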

\begin{proof}
  Let $A$ be a $3\times 3$ matrix satisfying the hypotheses with
  \begin{equation*}
    3\sum_{j=1}^{3} \sum_{1 \leq i < i' \leq 3} a_{i,j}a_{i',j}
      + \operatorname{perm}(A) 
    \end{equation*}
    maximum. (Since the function is continuous over a compact domain, such a maximizer exists.)

    It suffices to show that each entry in column $j$ equals $c_j / 3$, in which case we can compute
    \begin{equation*}
      3\sum_{j=1}^{3} \sum_{1 \leq i < i' \leq 3} a_{i,j}a_{i',j}
      + \operatorname{perm}(A) = c_1^2 + c_2^2 + c_3^2 + \frac{2}{9}c_1c_2c_3,
    \end{equation*}
    as desired.
    
    Suppose not, and consider two entries in the same column with the greatest difference. We assume without loss of generality it's $a_{1,3}$ and $a_{3,3}$, so $a_{3,3} - a_{1,3} \geq a_{i,j} - a_{i',j}$ for all $i,i',j \in [3]$.
    Let $\eps > 0$ be a sufficiently small number where in particular $\eps < (a_{3,3} - a_{1,3})/3$, and let $A' = (a'_{i,j})_{i,j\in[3]}$ be the matrix obtained from $A$ by adding $\eps$ to $a_{1,3}$ and subtracting $\eps$ from $a_{3,3}$.

    By the choice of $A$, we have
    \begin{equation*}
      3\sum_{j=1}^{3} \sum_{1 \leq i < i' \leq 3} a_{i,j}a_{i',j}
      + \operatorname{perm}(A) \geq     3\sum_{j=1}^{3} \sum_{1 \leq i < i' \leq 3} a'_{i,j}a'_{i',j}
      + \operatorname{perm}(A'),
    \end{equation*}
    but we compute
    \begin{multline*}
      \left(3\sum_{j=1}^{3} \sum_{1 \leq i < i' \leq 3} a'_{i,j}a'_{i',j}
        + \operatorname{perm}(A')\right)  -  \left(3\sum_{j=1}^{3} \sum_{1 \leq i < i' \leq 3} a_{i,j}a_{i',j}
        + \operatorname{perm}(A)\right)\\
      = 3\eps\left( a_{3,3} - a_{1,3} \right) - 3\eps^2 + \eps\left( a_{2,1}a_{3,2} + a_{2,2}a_{3,1} - a_{1,1} a_{2,2} - a_{1,2}a_{2,1}  \right).
    \end{multline*}
    By the choice of $a_{1,3}$ and $a_{3,3}$, we have $a_{3,2} - a_{1,2}, a_{3,1} - a_{1,1} \geq -(a_{3,3} - a_{1,3})$, so
    \begin{align*}
      a_{2,1}a_{3,2} + a_{2,2}a_{3,1} - a_{1,1} a_{2,2} - a_{1,2}a_{2,1}
      &= a_{2,1}(a_{3,2} - a_{1,2}) + a_{2,2}(a_{3,1} - a_{1,1})\\
      &\geq -(a_{2,1} + a_{2,2})(a_{3,3} - a_{1,3}) \geq -2(a_{3,3} - a_{1,3}).   
    \end{align*}
    Combining the inequalities above, we have
    \begin{equation*}
      \eps(a_{3,3} - a_{1,3}) - 3\eps^2 \leq 0,
    \end{equation*}
    contradicting our choice of $A$ and $\eps$.
\end{proof}

We need one more helpful lemma.
\begin{lemma}\label{lem:sum-of-squares-minus-S}
  If $1 \geq c_1 \geq c_2 \geq \cdots \geq c_n \geq 0$ and $\sum_{j=1}^n c_j \leq 3$, then
  \begin{equation*}
    \sum_{j=1}^n c_j^2 + \frac{2}{9}c_1c_2c_3 - c_1 - c_2 - c_3 \leq \frac{2}{9}.
  \end{equation*}
\end{lemma}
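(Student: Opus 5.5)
The plan is to split off the three largest terms, bound the tail by a quantity depending only on $c_3$ and $s:=c_1+c_2+c_3$, and then maximize the resulting function of $(c_1,c_2,c_3)$ by convexity arguments that push it to the boundary of the region $1\ge c_1\ge c_2\ge c_3\ge 0$.

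\emph{Step 1: bounding the tail.} For $j\ge 4$ we have $0\le c_j\le c_3$, and $\sum_{j\ge 4}c_j\le 3-s$. Applying Fact~\ref{lem: sums} with $a=0$ and $b=c_3$ gives
\[
\sum_{j\ge4}c_j^2\le c_3\sum_{j\ge 4}c_j\le c_3(3-s).
\]
So it suffices to show that
\[
f(c_1,c_2,c_3):=\sum_{i=1}^3(c_i^2-c_i)+c_3(3-c_1-c_2-c_3)+\tfrac29c_1c_2c_3\le \tfrac29
\]
whenever $1\ge c_1\ge c_2\ge c_3\ge 0$. (If $n<3$, pad with zeros.)

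\emph{Step 2: reducing $c_1$ to the boundary.} With $c_2,c_3$ fixed, $f$ is a quadratic in $c_1$ with leading coefficient $1>0$, hence convex. Its maximum over $c_1\in[c_2,1]$ is therefore attained at $c_1=1$ or at $c_1=c_2$.

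\emph{Step 3a: the case $c_1=1$.} Here
\[
f=c_2^2-c_2+c_3-c_2c_3+\tfrac29c_2c_3,
\]
which is convex in $c_2\in[c_3,1]$, so again only endpoints matter.
\begin{itemize}
\item At $c_2=1$ we get $f=\tfrac29c_3\le\tfrac29$.
\item At $c_2=c_3=x$ we get $f=\tfrac29x^2\le\tfrac29$.
\end{itemize}

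\emph{Step 3b: the case $c_1=c_2=y$.} Now $f$, viewed as a function of $y\in[c_3,1]$, has leading coefficient $2+\tfrac29c_3>0$, so it is convex in $y$.
\begin{itemize}
\item At $y=1$ we are back in Step 3a.
\item At $y=c_3=x$ we compute $f=3(x^2-x)+x(3-3x)+\tfrac29x^3=\tfrac29x^3\le\tfrac29$.
\end{itemize}

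Equality holds at $c_1=c_2=c_3=1$ with all other $c_j=0$. This matches the extremal example in Lemma~\ref{lem:3xn-matrix-bound}, where each of the three leading columns is filled with $1/3$.

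\emph{Main obstacle.} The only delicate point is Step 1: we must make sure the tail bound uses $c_j\le c_3$, which comes from the ordering, and not merely $c_j\le 1$, since the latter is too weak. We must also check that the term $c_3(3-s)$ is the exact one that makes $f$ linear in $c_3$. Everything after that is a finite boundary check driven by convexity in one variable at a time. The key point there is that the maximization over $c_1$ is done with $c_2,c_3$ fixed, so that the case split ``$c_1=1$ or $c_1=c_2$'' is exhaustive.
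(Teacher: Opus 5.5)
Your proof is correct and follows the paper's argument: the same tail bound $\sum_{j\ge4}c_j^2\le c_3(3-c_1-c_2-c_3)$ reduces the problem to the same cubic $f$ on $1\ge c_1\ge c_2\ge c_3\ge 0$. The only difference is the finish: the paper uses that $f$ is linear and nondecreasing in $c_3$, and then, after setting $c_3=c_2$, nondecreasing in $c_2$, which gives $f\le \frac29 c_1^2$ directly with no case split; you instead use convexity in $c_1$ and then in $c_2$ (or in the common value when $c_1=c_2$) to reduce to endpoint checks, and those checks all work out as you computed.
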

\begin{proof}
  First, we claim that
  \begin{equation}\label{eq:sum-of-squares-at-least-four}
    \sum_{j=4}^n c_j^2 \leq c_3(3 - c_1 - c_2 - c_3).
  \end{equation}
  Indeed, since $c_3 \geq c_j \geq 0$ for all $j \geq 4$, we have
  \begin{equation*}
    \sum_{j=4}^n c_j^2 \leq \sum_{j=4}^nc_3 c_j = c_3\sum_{j=4}^n c_j,
  \end{equation*}
  and since $\sum_{j=1}^n c_j \leq 3$, we have 
  \begin{equation*}
    \sum_{j=4}^n c_j \leq 3 - c_1 - c_2 - c_3.
  \end{equation*}
  Combining the inequalities above yields \eqref{eq:sum-of-squares-at-least-four}.
  
  Therefore,
  \begin{align*}
    \sum_{j=1}^n c_j^2 + \frac{2}{9}c_1c_2c_3 - c_1 - c_2 - c_3
    &\leq c_1^2 + c_2^2 + c_3^2 + 3c_3 - c_1c_3 - c_2c_3 - c_3^2 + \frac{2}{9}c_1c_2c_3 - c_1 - c_2 - c_3\\
    &= c_1(c_1 - 1) + c_2(c_2 - 1) + c_3(2 - c_1 - c_2 + \frac{2}{9}c_1c_2).
  \end{align*}

  Now let $f(x,y,z) \coloneqq x(x - 1) + y(y - 1) + z(2 - x - y + \frac{2}{9}xy)$. By the previous inequality, it suffices to prove that $f(x,y,z) \leq 2/9$ for all $1 \geq x \geq y \geq z \geq 0$.
  
  Since $f$ is linear in $z$ and $2 - x - y + \frac{2}{9}xy \geq 0$ for $0 \leq x,y\leq 1$, we have 
  \begin{equation*}
    f(x,y,z) \leq f(x, y, y) = x(x - 1) + y(1 - x + \frac{2}{9}xy).
  \end{equation*}
  Let $g(x,y) \coloneqq x(x - 1) + y(1 - x + \frac{2}{9}xy)$. Now we have
  \begin{equation*}
    \frac{\partial g}{\partial y} = 1 - x + \frac49xy \geq 0,
  \end{equation*}
  so $g(x,y) \leq g(x,x)$. Hence,
  \begin{equation*}
    f(x,y,z) \leq g(x,x) = x(x - 1) + x(1 - x + \frac{2}{9}x^2) = \frac{2}{9}x^2 \leq \frac{2}{9},
  \end{equation*}
  as desired.
\end{proof}

Now we can prove Lemma~\ref{lem:3xn-matrix-bound}.

\begin{proof}[Proof of Lemma~\ref{lem:3xn-matrix-bound}]
  For notational convenience, for a $3 \times n$ matrix $A$, define
\[
  Q(A) \coloneqq \sum_{j=1}^{n} \sum_{1 \leq i < i' \leq 3} a_{i,j}a_{i',j},
  \qquad
  P(A) \coloneqq \operatorname{perm}(A_{[3]}),
  \qquad
  S(A) \coloneqq \sum_{i=1}^{3}\sum_{j=1}^{3} a_{i,j},
\]
and let
\begin{equation*}
  F(A) \coloneqq 3Q(A) + P(A) - S(A).
\end{equation*}
Let $A = (a_{i,j})$ be a $3 \times n$ matrix satisfying the hypotheses with $F(A)$ maximum (such a maximum exists because $F$ is a continuous function over a compact domain). 
It suffices to show that $F(A) \leq 2/9$.
Let $Q_1 \coloneqq \sum_{j=1}^{3} \sum_{1 \leq i < i' \leq 3} a_{i,j}a_{i',j}$ and $Q_2 \coloneqq \sum_{j=4}^{n} \sum_{1 \leq i < i' \leq 3} a_{i,j}a_{i',j}$.

By Fact \ref{lem: ops1},
\begin{equation*}
  Q_2 \leq \frac{1}{3}\sum_{j=4}^n c_j^2.
\end{equation*}
We also have by Lemma~\ref{lem:square},
\begin{equation*}
  3Q_1 + P(A) \leq c_1^2 + c_2^2 + c_3^2 + \frac{2}{9}c_1c_2c_3.
\end{equation*}
Therefore,
\begin{equation*}
  F(A) \leq \sum_{j=1}^nc_j^2 + \frac{2}{9}c_1c_2c_3 - S(A),
\end{equation*}
and the desired result follows from Lemma~\ref{lem:sum-of-squares-minus-S}.
\end{proof}

\section{Coloring $3$-uniform and $k$-uniform hypergraphs}

The goal of this section is to prove Theorems \ref{thm: 3,2 corr}, \ref{thm: 3,3 corr}, and \ref{thm: kk corr}. The strategy is to apply the general framework of Theorem \ref{thm:gct}. This requires proving, for each class of hypergraphs considered, a uniform upper bound on the parameter $\theb_{L(\cH), kD}(e)$ for all edges $e$ in any sub-hypergraph. We establish these bounds through a series of structural lemmas.

It will be useful to have the following notation.

\begin{definition}
    Let $H(k,t,D)$ be the set of $k$-uniform, $t$-simple hypergraphs $\cH$ with $\Delta(\cH) \le D$. 
\end{definition}

\subsection{Upper bounds on the $\theb$ parameter}

The following lemmas provide the crucial upper bounds for the $\theb$ parameter in our three main cases. Their proofs, which rely on the subsequent structural analysis, are given in subsections \ref{subsection: 3,2 a param proof}, \ref{subsection: 3,3 param proof}, and \ref{subsection: kk param proof}.

The first two lemmas concern the 3-uniform cases, and give $\theb$ parameters which are best possible up to the $o(1)$ term.

\begin{lemma}\label{lem: 3,2 a parameter}
  Let $\cH\in H(3,2,D)$, and let $e\in E(\cH)$ be an edge. Then, as $D \to \infty$,
  $$\theb_{L(\cH), 3D}(e) \le 1 - \frac{2}{9} + \frac{2}{3^5} + o(1).$$
\end{lemma}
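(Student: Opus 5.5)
The plan is to write $\theb_{L(\cH),3D}(e)$ as an explicit function of the codegrees between the vertices of $e=\{v_1,v_2,v_3\}$ and the outside vertices. I will then remove the lower-order contributions and reduce the remaining optimisation to Lemma~\ref{lem: tableops redo} and Lemma~\ref{lem:3xn-matrix-bound}. Throughout $M=3D$, and I normalise every count by the appropriate power of $D$.

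\textbf{Step 1: setup.} Edges meeting $e$ in two vertices, say $\{v_i,v_{i'},x\}$, are adjacent to every neighbour through $v_i$ or $v_{i'}$. Replacing such an edge by an edge through $v_i$ alone does not decrease $\deg(e)$. It can only increase $P$ and $T$ in a way I expect to be harmless or favourable, and in any case it is a local exchange to check. So I will first argue that it suffices to treat the case where every neighbour of $e$ meets $e$ in exactly one vertex.

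Let $N_i$ be the set of neighbours through $v_i$, with $n_i=|N_i|\le D$. For an outside vertex $x$, let $a_{i,x}$ be the codegree of $v_i$ and $x$, normalised by $D$. Then $\sum_i a_{i,x}\le 1$ and $\sum_x a_{i,x}=2n_i/D$. Two members of the same $N_i$ are always adjacent. Members $f\in N_i$ and $g\in N_{i'}$ with $i\ne i'$ are adjacent only if they share an outside vertex. Hence the number of adjacent cross pairs is at most $D^2\sum_x\sum_{i<i'}a_{i,x}a_{i',x}$.

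\textbf{Step 2: heavy/light reduction.} Fix a small $\gamma>0$ and call $x$ \emph{heavy} if $\sum_i a_{i,x}\ge\gamma$. There are $O(1/\gamma)$ heavy vertices. By 2-simplicity, for each pair $\{x,y\}$ and each $i$ there is at most one edge $\{v_i,x,y\}$. Hence only $O(\gamma^{-2})=o(D)$ edges have two heavy outside vertices.

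Light vertices contribute at most $\gamma\sum_x\sum_i a_{i,x}=O(\gamma)$ to the quadratic sum. So up to $O(\gamma)+o(1)$, only heavy vertices matter, and each edge contributes essentially one heavy vertex. In the heavy part, with columns indexed by heavy vertices, each column sum is at most $1$ and the total is $\sum_j c_j\le \sum_i n_i/D\le 3$.

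Lemma~\ref{lem: tableops redo} with $k=3$, $C=1$ and $T=3$ then bounds the adjacent cross pairs by $D^2(1+O(\gamma))$. This gives $P\ge (n_1n_2+n_1n_3+n_2n_3)-D^2Q$, where $Q=\sum_j\sum_{i<i'}a_{i,j}a_{i',j}$. Up to the cubic term, this already gives $1-\frac29$ when $n_i=D$.

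\textbf{Step 3: the cubic term (main obstacle).} Independent triples must consist of one edge from each $N_i$, pairwise disjoint. I will bound $T$ by inclusion--exclusion:
\[
T\le n_1n_2n_3-\sum_{\text{adjacent cross pairs}}(\text{size of the third class})+(\text{triples with at least two adjacent pairs}).
\]
Since each edge has essentially one heavy vertex, a triple with two adjacent pairs must have all three edges through a common heavy vertex $x$, up to lower-order error. The subtracted main term then combines with the $-P/M^2$ term.

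I expect that, after setting $n_i=s_iD$ and optimising, the dependence on the $n_i$ is monotone, so the worst case is $n_i=D$. What then remains is an expression of the form
\[
\frac19 Q+\frac1{27}\operatorname{perm}(A_{[3]})-\frac1{27}S,
\]
which must be at most $2/3^5$. The permanent and $S$ terms should come from correcting the overcount of disjoint triples through the three heaviest vertices. Once columns are sorted by $c_j$, Lemma~\ref{lem:3xn-matrix-bound} gives exactly this bound, which yields $1-\frac29+\frac{2}{3^5}+O(\gamma)+o(1)$; letting $\gamma\to0$ slowly finishes the proof.

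The hard part is matching the exact cubic inclusion--exclusion to the form in Lemma~\ref{lem:3xn-matrix-bound}, in particular identifying where $\operatorname{perm}(A_{[3]})$ and $-S$ arise. My first attempt will be to treat the top three heavy vertices separately and bound everything else crudely by $n_1n_2n_3/D^3$ minus the pair corrections.
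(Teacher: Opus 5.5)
\textbf{Step 3 has a genuine error, and it is where the whole difficulty lies.} Your displayed inclusion--exclusion bound is false. Write $A_p$ for the set of triples in $N_1\times N_2\times N_3$ whose pair $p$ is adjacent, and $t_2,t_3$ for the numbers of triples with exactly two, respectively three, adjacent pairs. Then
\[
T=n_1n_2n_3-\sum_p|A_p|+t_2+2t_3 ,
\]
because a triple with all three pairs adjacent is subtracted three times and must be added back twice, not once.

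In the configuration of Lemma~\ref{lem: extreme 3,2}, write $n$ for the common value of the $n_i$. Then $\sum_p|A_p|=n^3$, $t_2=0$, $t_3=n^3/9$ and $T=2n^3/9$, whereas your right-hand side is only $n^3/9$. Since the bound is sharp, pushing your inequality through would ``prove'' something false. The valid Bonferroni truncation, with correction term $\sum_{p<p'}|A_p\cap A_{p'}|$, is too weak: it yields only $1-\frac29+\frac1{81}$.

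You also give no derivation of the form $\frac19Q+\frac1{27}\operatorname{perm}(A_{[3]})-\frac1{27}S$, and inclusion--exclusion does not produce it. The paper gets it from a cruder count.
\begin{itemize}
\item Split $N(f)$ into $V_1,V_2,V_3$ according to whether $0$, $2$ or $1$ of an edge's outside vertices lie among the three heaviest vertices $x_1,x_2,x_3$.
\item Independent triples meeting $V_1$ number at most $D^2|V_1|$, and $|V_1|\le 3D+6-\sum_{i,j\le 3}\deg(v_i,x_j)$. This is the source of $-S$.
\item Triples meeting $V_2$ also meet $V_1$.
\item The remaining triples use $x_1,x_2,x_3$ bijectively, which gives the permanent.
\end{itemize}
The pair bound is kept separate, and the leftover $3D^3/(3D)^3$ turns $1-\frac13$ into $1-\frac29$.

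\textbf{With the coefficient corrected, your own route closes, without Lemma~\ref{lem:3xn-matrix-bound}.} This is arguably simpler than the paper's argument. Let $B$ be the set of triples with at least two adjacent pairs, so $T\le n_1n_2n_3-\sum_p|A_p|+2|B|$. With $n_i=m=D-1$ we have $\sum_p|A_p|=m(3m^2-P)$, hence
\[
b\le 1-\tfrac{2}{27}-\tfrac{2P}{27D^2}+\tfrac{2|B|}{27D^3}+o(1).
\]
A triple in $B$ either has an outside vertex common to all three edges, or its two non-central edges meet the central edge in its two different outside vertices. The latter kind number $O(\gamma D^3)+o(D^3)$:
\begin{itemize}
\item if the centre has a light outside vertex, they number $O\big(D^3\sum_{y\ \mathrm{light}}\sum_{i<i'}a_{iy}a_{i'y}\big)$;
\item otherwise the centre is one of $O(\gamma^{-2})$ edges.
\end{itemize}
Combining this with $P\ge 3m^2-w(X)$ gives
\[
b\le 1-\tfrac{8}{27}+\tfrac{2}{27}\sum_{u}\Big(\tfrac{c_u^2}{3}+\tfrac{c_u^3}{27}\Big)+O(\gamma)+o(1).
\]
The light part of this sum is $O(\gamma)$. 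For heavy $u$ we have $c_u\le 1$, so $\frac{c_u^2}{3}+\frac{c_u^3}{27}\le\frac{10}{27}c_u$, and the heavy $c_u$ sum to at most $3+o(1)$. Hence $b\le 1-\frac{8}{27}+\frac{20}{243}+O(\gamma)+o(1)=1-\frac29+\frac{2}{3^5}+O(\gamma)+o(1)$, and you finish by letting $\gamma\to0$.

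\textbf{Step 1 is also unsound as stated.} Replacing $\{v_i,v_{i'},x\}$ by a fresh edge through $v_i$ keeps $\deg(e)$ fixed. But the new edge is non-adjacent to every edge of $N_{i'}$ not through $v_i$, so $P$ can rise by order $D$, while $T$ gains at most $n_{i'}n_{i''}\le D^2$. Since the change in $b$ is at most $-\Delta P/(9D^2)+\Delta T/(27D^3)$, $b$ decreases as soon as $\Delta P>D/3$.

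The paper instead fixes a maximizing pair $(\cF,f)$ and uses Lemma~\ref{lem: structure1}. Two operations each strictly increase $b$:
\begin{itemize}
\item adding a fresh edge at a vertex of degree less than $D$;
\item splitting an edge that meets $f$ twice into two edges, which raises $\deg_{L(\cF)}(f)$ by one and so pays for the increase of less than $3D$ in $P$.
\end{itemize}
This gives $n_i=D-1$ outright and makes your unproved monotonicity claim in the $n_i$ unnecessary.
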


\begin{lemma}\label{lem: 3,3 a parameter}
  Let $\cH\in H(3,3,D)$, and let $e\in E(\cH)$ be an edge. Then, as $D \to \infty$,
  $$\theb_{L(\cH), 3D}(e) \le 1-\frac{1}{9} + \frac{1}{3^5} +o(1).$$
\end{lemma}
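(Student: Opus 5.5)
The plan is to write $\theb_{L(\cH),3D}(e)$ in terms of the local structure around $e=\{v_1,v_2,v_3\}$ and then solve the resulting finite optimization problem. Throughout, $M=3D$.

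\textbf{Classifying the neighbours of $e$.} I sort the neighbours $f\neq e$ of $e$ in $L(\cH)$ by the set $f\cap e$.
\begin{itemize}
\item $A_i$ is the set of edges with $f\cap e=\{v_i\}$, and $n_i:=|A_i|$.
\item $B_{ij}$ is the set of edges with $f\cap e=\{v_i,v_j\}$.
\item $R$ is the set of parallel copies of $e$; these occur because $t=3$ allows multi-edges.
\end{itemize}
Two neighbours are non-adjacent in $L(\cH)$ exactly when they are disjoint, so their traces on $e$ must be disjoint. Hence:
\begin{itemize}
\item independent pairs occur only between $A_i$ and $A_j$ for $i\neq j$, or between $B_{ij}$ and $A_k$ with $\{i,j,k\}=[3]$;
\item independent triples consist of one edge from each of $A_1,A_2,A_3$, pairwise disjoint outside $e$.
\end{itemize}
The degree conditions give $n_i+|B_{ij}|+|B_{ik}|+|R|\le D-1$.

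\textbf{Recording the outside structure.} For $x\notin e$, let $a_{i,x}$ be the number of edges of $A_i$ containing $x$. Then $\sum_x a_{i,x}=2n_i$, and $\sum_i a_{i,x}\le D$ because every such edge contains $x$. The number of intersecting pairs between $A_i$ and $A_j$ is at most $\sum_x a_{i,x}a_{j,x}$. Let $K$ be the tripartite "intersection graph" on $A_1\cup A_2\cup A_3$, with $m$ edges.

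\textbf{Bounding $P$.} Lemma~\ref{lem: tableops redo} with $k=3$, $C=D$ and total $2(n_1+n_2+n_3)\le 6D$ gives $m\le \frac13\cdot 2(n_1+n_2+n_3)\cdot D\le 2D^2$. Therefore
\[
P(e)\ \ge\ \sum_{i<j}n_in_j - m + \sum_{k}|B_{ij}|\,n_k - O(D)\ \gtrsim\ D^2 .
\]
This alone yields the main-order bound $1-\frac19$ for $\frac{\deg(e)}{M}-\frac{P}{M^2}$.

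\textbf{Bounding $T$.} I would expand the number of independent triples by inclusion--exclusion in $K$ and drop the (negative) triangle term:
\[
T(e)\ \le\ n_1n_2n_3-\sum_{\{f,g\}\in K}n_{k(f,g)}+\#\{\text{cherries of $K$ spanning all three parts}\},
\]
where $k(f,g)$ denotes the part not containing $f$ or $g$. I would bound the cherries by $\sum_{f}\deg_K(f)^2$ restricted to pairs in different parts. I would also control $\deg_K(f)$ via the codegrees of the two outside vertices of $f$, using $\deg_K(f)\le 2D$ together with Fact~\ref{lem: sums}. Lemmas~\ref{lem: trianglecount1} and \ref{lem: trianglecount2} are available if the dropped triangle term needs to be recovered.

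\textbf{Reducing to an optimization problem.} Substituting these bounds turns $\theb$ into a function of the normalized quantities $n_i/D$, $|B_{ij}|/D$, $|R|/D$ and $m/D^2$ (and possibly a second-moment parameter). I would then show this function is at most $1-\frac19+\frac1{3^5}+o(1)$. I expect to handle the variables as follows:
\begin{itemize}
\item \emph{$R$ and $B_{ij}$.} Moving mass from $R$ and $B_{ij}$ into the $A_i$ should only help, since $R$ contributes to the degree but to no independent pair, and $B_{ij}$ trades two $A$-slots for one.
\item \emph{Symmetrization.} I would push toward $n_1=n_2=n_3=D$, arguing as in Lemma~\ref{lem:square}.
\end{itemize}
After these reductions, the remaining problem is one-dimensional in $m$ (or in the column profile of $(a_{i,x})$).

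\textbf{Main obstacle.} The hard step is this last joint optimization. Adding intersections in $K$ decreases $P$, which increases $\theb$, but also decreases $T$. So the extremum is a genuine trade-off, and the crude cherry bound may be too lossy to land exactly on $\frac1{243}$, which corresponds to $T\approx D^3/9$. If it is too lossy, I would switch to a matrix formulation analogous to Lemma~\ref{lem:3xn-matrix-bound}. In that formulation I would write $T$ exactly in terms of the codegree columns $a_{\cdot,x}$ and pairs of outside vertices. I would then prove that the maximizer spreads each column evenly across the three rows, and identify the extremal configuration that attains equality.
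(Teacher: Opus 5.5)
\paragraph{Verdict.} There is a genuine gap. Your setup is sound: the classes $A_i$, $B_{ij}$, $R$, the codegrees $a_{i,x}$, and $P(e)\ge D^2-O(D)$ are all fine. But the step that produces the constant $\frac{1}{3^5}$ is left open, and the route you propose for it cannot reach that constant.

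\paragraph{Why the cherry bound fails.} Dropping the triangle term of $K$ is already too lossy at the extremal configuration of Lemma~\ref{lem: extreme kk and 33}. There $n_i\approx D$, and each $f\in A_i$ meets $\frac23D$ edges of each other part. So $K$ has $2D^2$ edges, $\frac43D^3$ cross-part cherries and $\frac29D^3$ triangles. Your inequality then gives $T\le D^3-2D^3+\frac43D^3=\frac13D^3$ instead of the true value $\frac19D^3$. Any bound built on it is therefore at least $1-\frac19+\frac{3}{3^5}$ at that configuration, and no degree control, mass-shifting or symmetrization can repair this.

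\paragraph{The triangle lemmas point the wrong way for your use.} ``Recovering'' the triangle term would need a \emph{lower} bound on the number of triangles of $K$. Lemmas~\ref{lem: trianglecount1} and~\ref{lem: trianglecount2} are \emph{upper} bounds. They have to be applied to the non-intersection graph $\overline G$ on $N_{L(\cH)}(e)$, whose triangles are exactly the independent triples.

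\paragraph{Why the reduced problem is not one-dimensional.} Even on $\overline G$, a bound via $P$ alone is too weak: $(P/3)^{3/2}\approx 0.19D^3$ at $P=D^2$. So the problem is not ``one-dimensional in $m$''. $T$ depends on the multiplicities of the triples $\{v_i,x,x'\}$, not just on $m$ or on the codegree matrix. Your matrix fallback does not say how this dependence would be controlled, and that is where all the difficulty lies.

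\paragraph{What the paper does instead.} The missing ingredient is a stability argument giving an exact linear trade-off.
\begin{enumerate}
\item The paper passes to a maximizing pair (Lemma~\ref{lem: structure1}). Then there are no $B_{ij}$ or $R$ edges, and $P=3D^2-6D+3-w(X)+Y$.
\item The coarse bound $T\le (P/3)^{3/2}$, compared with the extremal example, forces $P=(1+\delta)D^2-9D$ with $0\le\delta\le 0.031$.
\item Near-extremality of $w(X)$ then forces structure:
\begin{itemize}
\item the low-degree outside vertices $X_s$ carry at most $\frac85\delta D$ edges;
\item exactly six other outside vertices remain;
\item by the ``moreover'' part of Lemma~\ref{lem: tableops redo}, every edge $g$ avoiding $X_s$ satisfies $\deg_{\overline G}(g)\le d(g)+y(g)$ with $d(g)=(\frac23\pm O(\sqrt\delta))D$.
\end{itemize}
\item Lemma~\ref{lem: trianglecount2} is tight at the extremum, where every $\deg_{\overline G}(g)=\frac23D$. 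Together with Fact~\ref{lem: sums} it yields $T\le(\frac19+3\delta)D^3$.
\item The resulting $+\frac{3\delta}{27}$ in $b$ exactly cancels the $-\frac{\delta}{9}$ coming from $P$.
\end{enumerate}
What your proposal lacks is an argument of this kind: that $\delta D^2$ extra independent pairs can create at most $3\delta D^3$ extra independent triples.
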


\begin{lemma}\label{lem: kk a parameter}
  Let $\cH\in H(k,k,D)$, and let $e\in E(\cH)$ be an edge. Then, as $D \to \infty$,
  $$\theb_{L(\cH), kD}(e) \le 1-\frac{k-1}{2k^2} + \frac{(k-1)(k-2)}{6k^3\sqrt{k}} + o(1).$$
\end{lemma}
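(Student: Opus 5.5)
The plan is to bound $P_{L(\cH)}(e)$ from below and $T_{L(\cH)}(e)$ from above in terms of a single quantity $m$, the number of "cross" independent pairs. Then I show that the resulting expression for $\theb$ is maximized at the smallest admissible $m$. Write $e=\{v_1,\dots,v_k\}$ and $M=kD$. Assign each neighbor $f\in N_{L(\cH)}(e)$ to a class $A_i$, where $i$ is the smallest index with $v_i\in f$. This partitions $N(e)$ into $A_1,\dots,A_k$ with $|A_i|\le D-1$. Two members of the same class share $v_i$, so they are adjacent in $L(\cH)$. Hence every independent pair lies across two classes and every independent triple meets three distinct classes. Let $G$ be the $k$-partite graph on $A_1\cup\dots\cup A_k$ whose edges are the independent (disjoint) cross pairs, and let $m=|E(G)|$. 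Then $P(e)=m$, and $T(e)$ is the number of triangles of $G$. By Lemma~\ref{lem: trianglecount1},
\[
T(e)\le \binom{k}{3}\Bigl(m/\tbinom{k}{2}\Bigr)^{3/2}.
\]

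Next I lower-bound $m$. A cross pair $f\in A_i$, $g\in A_j$ is adjacent in $L(\cH)$ only if its members share a vertex. That shared vertex is either some $v_\ell\in e$ (possible only when the traces $f\cap e$ and $g\cap e$ overlap, i.e.\ when a trace has size at least $2$) or some $x\notin e$. For $x\notin e$, let $a_{i,x}$ be the number of edges of $A_i$ containing $x$. Then $\sum_i a_{i,x}\le D$ by the degree of $x$, and $\sum_x\sum_i a_{i,x}\le (k-1)\sum_i|A_i|\le k(k-1)D$. Lemma~\ref{lem: tableops redo} with $C=D$ and $T=k(k-1)D$ bounds the number of cross pairs adjacent through outside vertices by
\[
\sum_x\sum_{i<i'}a_{i,x}a_{i',x}\le \tfrac{k-1}{2k}\,k(k-1)D^2=\tfrac{(k-1)^2}{2}D^2 .
\]
In the model case where all traces are singletons and $|A_i|=D-o(D)$, we get $d=kD-o(D)$. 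The number of cross pairs is then $\binom{k}{2}D^2(1-o(1))$, so $m\ge \frac{k-1}{2}D^2-o(D^2)$ and $P/M^2\ge \frac{k-1}{2k^2}-o(1)$.

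To finish, set
\[
\phi(m)=-\frac{m}{M^2}+\binom{k}{3}\Bigl(m/\tbinom{k}{2}\Bigr)^{3/2}\Big/M^3 .
\]
Its derivative in $m$ is at most $-\frac{1}{M^2}\bigl(1-\frac{k-2}{2k}\bigr)<0$ on the whole range $m\le\binom{k}{2}D^2$. So $\phi$ is decreasing, and we may substitute the lower bound for $m$. At $m=\frac{k-1}{2}D^2$ the triangle term evaluates exactly to $\frac{(k-1)(k-2)}{6k^3\sqrt k}$. Together with $d/M\le 1$, this gives the claimed bound.

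The main obstacle is the general case, where traces can have size $\ge 2$ (including copies of $e$ itself, since multi-edges are allowed) and the class sizes are unbalanced. I would handle this with a budget argument. Let $s_f=|f\cap e|$. Then $\sum_f s_f\le k(D-1)$ by the degrees of the $v_i$, and edge $f$ has only $k-s_f$ outside vertices. Each unit of degree spent on a trace of size $s\ge2$ buys only $1/s$ of a neighbor, costing $\frac{s-1}{s}\cdot\frac1M$ in the $d/M$ term. It also removes at most $O(D)$ independent pairs per such neighbor, so it lowers $P/M^2$ by at most about $1/M$ times a factor of $(k-1)/k$ or less. I would verify that this trade-off is never favorable. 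For unbalanced classes, I would express the number of cross pairs as $\sum_{i<j}|A_i||A_j|$ and use Fact~\ref{lem: ops1} together with the fact that the whole expression is increasing in each $|A_i|$ (again by the derivative bound above). This reduces to the balanced singleton case, up to $o(1)$ error terms.
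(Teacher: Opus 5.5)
Your endgame is the paper's. Once every neighbor of $e$ meets $e$ in exactly one vertex and each $v_i$ lies in $D-1$ other edges, your $k$-partite setup, Lemma~\ref{lem: tableops redo} with $C=D$, $T=k(k-1)D$, Lemma~\ref{lem: trianglecount1}, and the monotonicity of $\phi$ are exactly Lemma~\ref{lem: kunif pairs} plus the final computation.

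The gap is the step you flag yourself: reducing an arbitrary pair $(\cH,e)$ to that configuration.
\begin{itemize}
\item Your lower bound on $m$ only subtracts cross pairs that meet at a vertex outside $e$. Once some neighbor has $|f\cap e|\ge 2$, cross pairs can also be adjacent through a vertex of $e$, and nothing in the proposal bounds how many there are.
\item The budget paragraph does not fill this in. It weighs the loss $\frac{s-1}{s}\cdot\frac1M$ in $d/M$ against an unquantified ``$O(D)$'' change in $P$. It also omits that each neighbor you would gain with the saved degree can bring up to about $(k-1)D$ new independent pairs. That term is of the same order $\frac1M$ as the quantities being compared, so ``never favorable'' is asserted rather than shown.
\item For unbalanced singleton classes, Fact~\ref{lem: ops1} bounds $\sum_{i<j}|A_i||A_j|$ from above, which is the wrong direction for a lower bound on $m$. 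The monotonicity you invoke suffices on its own, and it is correct: the partial derivative of $\frac dM+\phi(\cdot)$ in $|A_i|$ is at least $\frac1M-\frac{(k-1)D}{M^2}>0$.
\end{itemize}

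The paper closes this gap with Lemma~\ref{lem: structure1}. It fixes a pair $(\cF,f)$ maximizing $b$ over $H(k,k,D)$; a maximum exists because $b$ takes finitely many values for fixed $D$. It then uses two explicit moves. The first adds a pendant edge $\{v_1,x_1,\dots,x_{k-1}\}$ at a vertex of $f$ of degree below $D$. The second replaces an edge $g\supseteq\{v_1,v_2\}$ by $\{v_1,x_1,\dots,x_{k-1}\}$ and $(g\cup\{x_1\})\setminus\{v_1\}$. Each move raises $d$ by one, raises $P$ by less than $M$, and does not lower $T$. Hence $b$ strictly increases unless $f$ meets exactly $kD-k$ edges, each in one vertex.

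Alternatively, you can complete your direct route by tracking the excess $\sigma:=\sum_f(|f\cap e|-1)$. Let $c_\ell\le D-1$ be the number of neighbors through $v_\ell$.
\begin{itemize}
\item Then $d+\sigma=\sum_\ell c_\ell\le k(D-1)$.
\item Cross pairs adjacent through $v_\ell$ number at most $\binom{c_\ell}{2}-\binom{|A_\ell|}{2}\le (c_\ell-|A_\ell|)D$, hence at most $D\sigma$ in total.
\item Lemma~\ref{lem: tableops redo} with $T=(k-1)d-\sigma$ handles adjacencies through outside vertices.
\end{itemize}
Together these give
\[
m\ \ge\ \sum_{i<j}|A_i||A_j|-\frac{(k-1)^2}{2k}Dd-\frac{k+1}{2k}D\sigma .
\]
Now replace $\sigma$ by its largest allowed value $k(D-1)-d$, which only increases the bound. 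The resulting expression $\frac dM+\phi(\max\{0,\cdot\})$ is nondecreasing in each $|A_i|$, with partial derivative at least $\frac{k-1}{2k}\cdot\frac1M$. So it is maximized at $|A_i|=D-1$, $\sigma=0$, which is your model case.

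You need one of these two arguments. As written, the proposal proves the bound only for the configuration it was designed around.
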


Let's discuss some intuition behind the constants in the expressions above. Given the definition of the parameter $\theb_{L(\cH), 3D}(e)$, we might guess that $\theb (e)$ is largest when $\deg_{L(\cH)}(e)$ is as large as possible, and subject to this, when the number of independent pairs $P_{L(\cH)}(e)$ is as small as possible. 

For a 3-uniform 2-simple hypergraph, this means we look for an edge $e$ where $\deg_{L(\cH)}(e) = 3D-3$, the largest possible value, and there are many edges in $N_{L(\cH)}(e)$. The edge $e$ in Figure \ref{fig: 3,2 optimum edge} is therefore a good candidate.

Indeed, as we show in the next lemma, the configuration in Figure \ref{fig: 3,2 optimum edge} realizes the bound, up to a vanishing error, in Lemma~\ref{lem: 3,2 a parameter}.

\begin{lemma}\label{lem: extreme 3,2}
    For every $D \in \mathbb Z^+$, there exists a 3-uniform 2-simple hypergraph $\cH$ of maximum degree at most $D$ and an edge $e \in \cH$ such that
  \begin{equation*}
    b_{L(\cH),3D}(e) \geq 1 - \frac{2}{9} + \frac{2}{3^5} -o(1),
  \end{equation*} 
  where $o(1)$ goes to 0 as $D$ goes to infinity.
\end{lemma}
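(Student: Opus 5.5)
We take $e=\{v_1,v_2,v_3\}$ and three further vertices $x_1,x_2,x_3$. Each pair $(v_i,x_j)$ gets the same codegree, about $D/3$. This is the configuration of Figure~\ref{fig: 3,2 optimum edge}, and it is the equality case of Lemma~\ref{lem:3xn-matrix-bound}, where all entries equal $1/3$.

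\textbf{Construction.} Let $d:=\lfloor (D-1)/3\rfloor$. For each $i,j\in[3]$ and each $\ell\in[d]$, introduce a fresh vertex $y_{i,j,\ell}$ and the edge $f_{i,j,\ell}:=\{v_i,x_j,y_{i,j,\ell}\}$. Let $\cH$ consist of $e$ together with these $9d$ edges.

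\textbf{Checking the hypotheses.}
\begin{itemize}
\item Uniformity: $\cH$ is clearly $3$-uniform.
\item $2$-simplicity: two distinct edges $f_{i,j,\ell}$ and $f_{i',j',\ell'}$ have different private vertices $y$, so they share at most $\{v_i,x_j\}$. Also $|e\cap f_{i,j,\ell}|=1$.
\item Degrees: $\deg(v_i)=1+3d\le D$, $\deg(x_j)=3d\le D$, and $\deg(y_{i,j,\ell})=1$.
\end{itemize}

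\textbf{Computing $b_{L(\cH),3D}(e)$.} Set $M=3D$. Every $f_{i,j,\ell}$ meets $e$, so $\deg_{L(\cH)}(e)=9d\ge 3D-O(1)$. Hence $\deg(e)/M\ge 1-o(1)$.

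Two neighbours $f_{i,j,\ell}$ and $f_{i',j',\ell'}$ are non-adjacent in $L(\cH)$ exactly when $i\ne i'$ and $j\ne j'$. The cells $(i,j),(i',j')\in[3]^2$ with $i\ne i'$ and $j\ne j'$ form $9\cdot 4/2=18$ unordered pairs. Therefore
\[
P(e)=18d^2=2D^2-O(D).
\]

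A triple of neighbours is independent exactly when its three cells lie in distinct rows and distinct columns, i.e. form a permutation of $[3]$. This gives
\[
T(e)=3!\,d^3=\tfrac{2}{9}D^3-O(D^2).
\]

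Substituting into the definition of $b$,
\[
b_{L(\cH),3D}(e)\ \ge\ 1-o(1)-\frac{2D^2}{9D^2}+\frac{2D^3/9}{27D^3}-o(1)\ =\ 1-\frac29+\frac{2}{3^5}-o(1).
\]

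No step is a real obstacle. The only care needed is in the rounding to $d=\lfloor (D-1)/3\rfloor$ and in confirming that the $O(D)$ and $O(D^2)$ error terms vanish after dividing by $M^2$ and $M^3$ respectively.
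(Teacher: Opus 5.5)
Your proposal is correct and is essentially the paper's proof. The paper uses the same construction: it chooses $D'\in\{D-2,D-1,D\}$ with $3\mid D'-1$, so that $(D'-1)/3=\lfloor (D-1)/3\rfloor=d$. It then computes the same counts, $\deg_{L(\cH)}(e)=9d$, $P(e)=18d^2$ and $T(e)=6d^3$.
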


\begin{proof}
    Let $D'\in \{D-2, D-1, D\}$ such that $D'-1$ is divisible by 3.
    
    Construct a 3-uniform 2-simple hypergraph with maximum degree $D'$ as follows. Let the vertex set be $V(\cH) = \{v_1,v_2,v_3\}\cup \{x_1,x_2,x_3\}\cup \{y_{i,j,1}, y_{i,j,2},...,y_{i,j,(D'-1)/3}: (i,j)\in [3]\times [3]\}$. Let $e\in \cH$ be an edge such that $e=\{v_1,v_2,v_3\}$.

    For every $(i,j)\in [3]\times [3]$ and $k\in [(D'-1)/3]$, we also have the edge $\{v_i,x_j, y_{i,j,k}\}$. Thus, there are $(D'-1)/3$ edges containing the pair $\{v_i,x_j\}$. See Figure \ref{fig: 3,2 optimum edge}.

    We calculate $\deg_{L(\cH)}(e) = 3D'-3=3D+o(D)$, $P_{L(\cH)}(e) = 18 ((D'-1)/3)^2 = 2D^2+o(D^2)$, and $T_{L(\cH)}(e) = 6 ((D'-1)/3)^3 = \frac29 D^3 + o(D^3)$, which altogether gives $b_{L(\cH),3D}(e)=\frac{3D+o(D)}{3D}-\frac{2D^2 + o(D^2)}{(3D)^2}+\frac{\frac29 D^3 + o(D^3)}{(3D)^3} = 1-\frac29 + \frac{2}{3^5}+o(1)$.
\end{proof}

\begin{figure}[t]
    \centering
    \begin{subfigure}[t]{0.48\textwidth}
	    \centering
	    \resizebox{0.85\linewidth}{!}{%
	    \begin{tikzpicture}

		\tikzstyle{every node}=[font=\LARGE]
	\path[use as bounding box] (4.25,3.5) rectangle (13.25,13.8);
	\node at (8.75,12) [circ] {};
\node at (8.75,4.5) [circ] {};
\node at (5,12) [circ] {};
\node at (5,4.5) [circ] {};
\node at (12.5,12) [circ] {};
\node at (12.5,4.5) [circ] {};
\draw [line width=0.6pt, short] (5,12) .. controls (4.5,8.25) and (4.5,8.25) .. (5,4.5);
\draw [line width=0.6pt, short] (5,12) .. controls (6.5,7.75) and (6.5,8) .. (8.75,4.5);
\draw [line width=0.6pt, short] (5,12) .. controls (8.5,7.75) and (8.5,7.75) .. (12.5,4.5);
\draw [line width=0.6pt, short] (8.75,12) .. controls (6.75,8.5) and (6.5,8.5) .. (5,4.5);
\draw [line width=0.6pt, short] (8.75,12) .. controls (8.25,8.25) and (8,8.25) .. (8.75,4.5);
\draw [line width=0.6pt, short] (12.5,12) .. controls (12,8.25) and (11.75,8.25) .. (12.5,4.5);
\draw [line width=0.6pt, short] (8.75,12) .. controls (10.25,8.25) and (10.25,8.25) .. (12.5,4.5);
\draw [line width=0.6pt, short] (12.5,12) .. controls (10.25,8.25) and (10,8.25) .. (8.75,4.5);
\draw [line width=0.6pt, short] (12.5,12) .. controls (8,8.25) and (8,8.25) .. (5,4.5);
\draw [line width=0.6pt, short] (5,12) -- (5,12);
\draw [line width=0.6pt, short] (5,12) .. controls (5.5,8.25) and (5.5,8.25) .. (5,4.5);
\draw [line width=0.6pt, short] (5,12) .. controls (7.25,8.25) and (7.25,8.25) .. (8.75,4.5);
\draw [line width=0.6pt, short] (5,12) .. controls (9,8.25) and (9.25,8.25) .. (12.5,4.5);
\draw [line width=0.6pt, short] (8.75,12) .. controls (7.25,8.25) and (7.25,8.25) .. (5,4.5);
\draw [line width=0.6pt, short] (8.75,12) .. controls (9.25,8.25) and (9.25,8) .. (8.75,4.5);
\draw [line width=0.6pt, short] (8.75,12) .. controls (11.25,8.25) and (11.25,8.25) .. (12.5,4.5);
\draw [line width=0.6pt, short] (12.5,12) .. controls (9.25,8.25) and (9.25,8.25) .. (5,4.5);
\draw [line width=0.6pt, short] (12.5,12) .. controls (11.25,8.25) and (11.25,8.25) .. (8.75,4.5);
\draw [line width=0.6pt, short] (12.5,12) .. controls (13,8.25) and (13,8.25) .. (12.5,4.5);
\draw [line width=0.6pt, short] (5,12) -- (12.5,12);
\node [font=\LARGE] at (5,12.5) {$v_1$};
\node [font=\LARGE] at (8.75,12.5) {$v_2$};
\node [font=\LARGE] at (12.5,12.5) {$v_3$};
	\node [font=\LARGE] at (5,3.75) {$x_1$};
	\node [font=\LARGE] at (8.75,3.75) {$x_2$};
	\node [font=\LARGE] at (12.5,3.75) {$x_3$};
\draw [line width=0.5pt, short] (5,12) -- (5,4.5);
\draw [line width=0.5pt, short] (5,12) -- (8.75,4.5);
\draw [line width=0.5pt, short] (5,12) -- (12.5,4.5);
\draw [line width=0.5pt, short] (8.75,12) -- (5,4.5);
\draw [line width=0.5pt, short] (8.75,12) -- (8.75,4.5);
\draw [line width=0.5pt, short] (8.75,12) -- (12.5,4.5);
\draw [line width=0.5pt, short] (12.5,12) -- (5,4.5);
\draw [line width=0.5pt, short] (12.5,12) -- (8.75,4.5);
\draw [line width=0.5pt, short] (12.5,12) -- (12.5,4.5);
	\node [font=\LARGE] at (8.75,13.5) {$e$};

    \end{tikzpicture}%
    }
    \caption{A 3-uniform 2-simple hypergraph}
    \label{fig: 3,2 optimum edge}
    \end{subfigure}\hfill
    \begin{subfigure}[t]{0.48\textwidth}
	    \centering
	    \resizebox{0.85\linewidth}{!}{%
		\begin{tikzpicture}
		\tikzstyle{every node}=[font=\LARGE]
		\path[use as bounding box] (4.25,3.5) rectangle (13.25,13.8);
		\node at (8.75,12) [circ] {};
	\node at (5,12) [circ] {};
	\node at (12.5,12) [circ] {};
	\node at (8.75,8.25) [circ] {};
	\node at (5,8.25) [circ] {};
	\node at (12.5,8.25) [circ] {};
	\node at (5,4.5) [circ] {};
	\node at (8.75,4.5) [circ] {};
	\node at (12.5,4.5) [circ] {};
	\draw [line width=0.5pt, short] (5,12) -- (5,4.5);
	\draw [line width=0.5pt, short] (8.75,12) -- (8.75,4.5);
	\draw [line width=0.5pt, short] (12.5,12) -- (12.5,4.5);
	\draw [line width=0.5pt, dashed] (5,12) -- (12.5,4.5);
	\draw [line width=0.5pt, dashed] (8.75,12) .. controls (11.25,10.25) and (12.25,9.5) .. (12.5,8.25);
	\draw [line width=0.5pt, dashed] (12.5,8.25) .. controls (12.25,6) and (9.25,5.75) .. (5,4.5);
	\draw [line width=0.5pt, dashed] (12.5,12) .. controls (8,10.25) and (6,10.5) .. (5,8.25);
	\draw [line width=0.5pt, dashed] (5,8.25) .. controls (5.25,6.5) and (6.75,5.75) .. (8.75,4.5);
	\draw [ line width=0.5pt, short, dotted] (8.75,12) .. controls (6.5,10.75) and (5.5,10.25) .. (5,8.25);
	\draw [ line width=0.5pt, short, dotted] (5,8.25) .. controls (5.5,6) and (8.25,5.5) .. (12.5,4.5);
	\draw [line width=0.5pt, short, dotted] (12.5,12) -- (5,4.5);
	\draw [ line width=0.5pt, short, dotted] (5,12) .. controls (8,11) and (12.4,10) .. (12.5,8.25);
	\draw [ line width=0.5pt, short, dotted] (12.5,8.25) .. controls (12.5,5.25) and (11.25,5) .. (8.75,4.5);
	\draw [line width=0.5pt, short] (5,12) -- (12.5,12);
	\node [font=\LARGE] at (5,12.5) {$v_1$};
	\node [font=\LARGE] at (8.75,12.5) {$v_2$};
	\node [font=\LARGE] at (12.5,12.5) {$v_3$};
	\node [font=\LARGE] at (5,3.75) {$x_1$};
	\node [font=\LARGE] at (8.75,3.75) {$x_2$};
	\node [font=\LARGE] at (12.5,3.75) {$x_3$};
	\node [font=\LARGE] at (4.25,8.25) {$x_4$};
	\node [font=\LARGE] at (7.75,8.25) {$x_5$};
	\node [font=\LARGE] at (13.25,8.25) {$x_6$};
	\node [font=\LARGE] at (8.75,13.5) {$e$};
	\end{tikzpicture}%
    }
    \caption{A 3-uniform 3-simple hypergraph}
    \label{fig: 3,3 optimum edge}
    \end{subfigure}
    \caption{Extremal examples. In (a), edge $e=\{v_1,v_2,v_3\}$ intersects $3D-3$ other edges, each of which contains exactly one of $\{x_1,x_2,x_3\}$. For each $i\in [3]$ and $j\in [3]$, there are $(D-1)/3$ edges containing $\{v_i,x_j\}$; the remaining vertices of all these edges are distinct and are not shown. In (b), edge $e=\{v_1,v_2,v_3\}$ intersects $3D-3$ other edges, each of which contains exactly one of $\{x_1,x_2,x_3\}$ and exactly one of $\{x_4,x_5,x_6\}$. Aside from $e$, there are 9 types of edges, each one occurring $(D-1)/3$ times. Such edges of the same type (solid, dashed, or dotted) form a size 3 matching.}
    \label{fig: optimum edge examples}
\end{figure}

A similar intuition to the 2-simple case leads us to suspect that $b(e)$ will be maximized for a 3-uniform 3-simple hypergraph $\cH$ and edge $e$ where $\deg_{L(\cH)}(e)$ is large and, subject to this, $P_{L(\cH)}(e)$ is small. As we show in the next lemma, the configuration in Figure~\ref{fig: 3,3 optimum edge} realizes the bound in Lemma~\ref{lem: 3,3 a parameter}, up to a vanishing error.

\begin{lemma}\label{lem: extreme kk and 33}
    For every $D \in \mathbb Z^+$ and $k \in \mathbb Z^+$ for which there exists an affine plane of order $k$, there exists a $k$-uniform $k$-simple hypergraph $\cH$ of maximum degree at most $D$ and an edge $e \in \cH$ such that
  \begin{equation*}
    \theb_{L(\cH), kD}(e)= 1-\frac{k-1}{2k^2} + \frac{(k-1)(k-2)}{6k^4} + o(1),
  \end{equation*}
  where $o(1)$ goes to 0 as $D$ goes to infinity.
  
  In particular, for every $D \in \mathbb Z^+$, there exists a $3$-uniform $3$-simple hypergraph $\cH$ of maximum degree at most $D$ and an edge $e \in \cH$ such that
    \begin{equation*}
    b_{L(\cH),3D}(e) \geq 1 - \frac{1}{9} + \frac{1}{3^5}-o(1).
  \end{equation*}
\end{lemma}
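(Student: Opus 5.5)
The plan is to build the extremal example directly from an affine plane of order $k$, generalizing Figure~\ref{fig: 3,3 optimum edge}, and then count $\deg_{L(\cH)}(e)$, $P_{L(\cH)}(e)$ and $T_{L(\cH)}(e)$ exactly. Recall that an affine plane of order $k$ has $k^2$ points and $k^2+k$ lines, each line has $k$ points, and the lines split into $k+1$ parallel classes of $k$ pairwise disjoint lines each. Any two non-parallel lines meet in exactly one point.

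\textbf{Construction.} Fix $D$ and choose $D'\in\{D-k+1,\dots,D\}$ with $k \mid D'-1$, and put $m:=(D'-1)/k$. Let $e$ be one line of the plane. Let $\cH$ consist of $e$ (once) together with $m$ copies of each of the $k^2$ lines not parallel to $e$. The hypergraph is $k$-uniform. It is trivially $k$-simple, as a multihypergraph in which every pair of edges meets in at most $k$ vertices.

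\textbf{Degrees.} A point of $e$ lies on exactly $k$ lines not parallel to $e$, so its degree is $1+km=D'\le D$. Any other point lies on $k+1$ lines, one of which is parallel to $e$ and hence excluded. Its degree is therefore $km=D'-1\le D$. So $\Delta(\cH)\le D$.

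\textbf{Counting in $N_{L(\cH)}(e)$.} Every non-parallel line meets $e$ in exactly one point, so $\deg_{L(\cH)}(e)=k^2m=kD+O(1)$. Two neighbours of $e$ are non-adjacent in $L(\cH)$ exactly when they are copies of two distinct parallel lines. Copies of the same line intersect, and non-parallel lines intersect. The $k^2$ lines form $k$ parallel classes of $k$ lines, so
\[
P_{L(\cH)}(e)=k\binom{k}{2}m^2=\frac{k-1}{2}D^2+O(D).
\]
An independent triple must consist of copies of three distinct lines from a single parallel class, so
\[
T_{L(\cH)}(e)=k\binom{k}{3}m^3=\frac{(k-1)(k-2)}{6k}D^3+O(D^2).
\]
Dividing by $kD$, $(kD)^2$ and $(kD)^3$ respectively gives
\[
b_{L(\cH),kD}(e)=1-\frac{k-1}{2k^2}+\frac{(k-1)(k-2)}{6k^4}+o(1).
\]
For $k=3$ an affine plane of order $3$ exists, and the third term equals $\frac{2}{6\cdot 81}=\frac{1}{3^5}$. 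This gives the second statement and recovers Figure~\ref{fig: 3,3 optimum edge}.

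The only mildly delicate point is confirming that the independence structure in $N_{L(\cH)}(e)$ is exactly the union of $k$ disjoint complete $k$-partite blocks. Each block has parts of size $m$, one part per line of the parallel class. Everything else is routine bookkeeping of the $O(1)$ error from replacing $D$ by $D'$.
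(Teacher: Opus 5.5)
Your proposal is correct and follows the paper's proof essentially verbatim. You use the same construction: $e$ together with $(D'-1)/k$ copies of each of the $k^2$ lines not parallel to $e$, with $D-k<D'\le D$. You also obtain the same exact counts $\deg_{L(\cH)}(e)=kD'-k$, $P_{L(\cH)}(e)=k\binom{k}{2}m^2$ and $T_{L(\cH)}(e)=k\binom{k}{3}m^3$.
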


\begin{proof}
  Let $D-k < D' \le D$ such that $D'-1$ is divisible by $k$, and suppose that there exists an affine plane of order $k$. 
  Choose a line to play the role of $e$, and let $\mathcal L$ be the set of lines not parallel to $e$.

  To form $\cH$, take the points of the affine plane as the vertices of $\cH$, include $e$ as an edge, and include $(D' - 1)/k$ copies of each line in $\mathcal L$ as edges. Each point of $e$ lies on one line from each of the $k$ parallel classes represented in $\mathcal L$ and thus has degree at most $D' \leq D$, while every other point has degree at most $D' - 1$. Hence, $\cH$ has maximum degree at most $D$, as desired, and the edge $e$ intersects exactly $kD' - k$ other edges.
  Moreover, two neighbors of $e$ are disjoint precisely when the two corresponding lines in the affine plane are parallel, so we have
  \begin{equation*}
    P_{L(\cH)}(e) = k \binom{k}{2} \left(\frac{D'-1}{k}\right)^2 = \frac{k-1}{2}D^2 + o(D^2).
  \end{equation*}
  Similarly, three neighbors of $e$ are pairwise disjoint when the three corresponding lines are parallel, so we have
  \begin{equation*}
    T_{L(\cH)}(e) = k\binom{k}{3}\left(\frac{D'-1}{k}\right)^3 = \frac{(k-1)(k-2)}{6k} D^3 + o(D^3).
  \end{equation*}
  Altogether, we have
  \begin{equation*}
    \theb_{L(\cH), kD}(e)= 1-\frac{k-1}{2k^2} + \frac{(k-1)(k-2)}{6k^4} + o(1),
  \end{equation*}
  as desired.

  It is well known that affine planes of order $k$ exist whenever $k$ is a prime power, so we get the desired result for a 3-uniform hypergraph if we let $k=3$, in which case the construction matches Figure \ref{fig: 3,3 optimum edge}.
\end{proof}


\subsection{Structural lemmas and tools}

To prove the bounds in Lemmas \ref{lem: 3,2 a parameter}, \ref{lem: 3,3 a parameter}, and \ref{lem: kk a parameter}, we analyze a hypergraph $\cF$ and an edge $f$ which maximize $\theb(f)$. The following lemma shows that such a maximizing pair $(\cF, f)$ must have $f$ intersecting as many edges as possible -- to wit, $\deg_{L(\cF)}(f)$ as large as possible. 

\begin{lemma}\label{lem: structure1}
    Let $k,t\in \mathbb{Z}^+$ and $D$ sufficiently large. Suppose $\mathcal{F} \in H(k,t,D)$ and $f \in E(\mathcal{F})$ such that $$b_{L(\mathcal{F}), kD}(f) = \max_{\cH \in H(k,t,D)}\max_{e\in E(\cH)} b_{L(\cH), kD}(e).$$
Then $\deg_{\mathcal{F}}(v) = D$ for every $v \in f$, and $f$ intersects exactly $kD-k$ other edges.
\end{lemma}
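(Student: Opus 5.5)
We argue by contradiction through a local modification. Suppose $(\cF,f)$ is a maximizer but some $v\in f$ has $\deg_{\cF}(v)<D$. We build $\cF'\in H(k,t,D)$ containing $f$ by adding one new edge $g$ that meets $f$ only in $v$: take $k-1$ brand-new vertices $w_1,\dots,w_{k-1}$ and set $g=\{v,w_1,\dots,w_{k-1}\}$. Then $\cF'$ is still $k$-uniform. It is $t$-simple since $g$ meets every old edge only in $v$ (assuming $t\ge 1$). Its maximum degree is at most $D$ because only $\deg(v)$ increased, and $\deg(v)\le D$ still. In $L(\cF')$, the vertex $f$ gains exactly one neighbor $g$, and $g$ is adjacent to exactly the old edges through $v$. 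Since $M=kD$ is unchanged, we can compare $b$ directly.

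The change is
\[
\Delta b=\frac{1}{M}-\frac{\Delta P}{M^2}+\frac{\Delta T}{M^3}.
\]
Here $\Delta P$ is the number of neighbors of $f$ not containing $v$, so $\Delta P\le \deg_{L(\cF)}(f)\le k(D-1)$. Also $\Delta T\ge 0$, since it counts the independent pairs of $N(f)$ avoiding $v$. Hence
\[
\Delta b\ge \frac1{kD}-\frac{k(D-1)}{k^2D^2}=\frac{1}{k D^2}>0,
\]
so $b_{L(\cF'),kD}(f)>b_{L(\cF),kD}(f)$. This contradicts maximality, and therefore $\deg_{\cF}(v)=D$ for all $v\in f$. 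The key point is that $\Delta P\le \deg(f)<kD=M$, so the first-order gain beats the second-order loss. The hypothesis ``$D$ sufficiently large'' is not really needed here; the inequality holds as soon as $\deg(f)<M$.

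For the second claim, all vertices of $f$ have degree exactly $D$. Thus the number of edges other than $f$ meeting $f$, counted with multiplicity over vertices, is $k(D-1)$. So $\deg_{L(\cF)}(f)=k(D-1)$ exactly when no other edge meets $f$ in two or more vertices. Suppose instead some edge $h\neq f$ has $|h\cap f|\ge 2$. We replace $h$ by edges each meeting $f$ in a single vertex. Remove $h$, and for each $u\in h\cap f$ add a new edge $g_u=\{u\}\cup\{\text{$k-1$ fresh vertices}\}$. Vertex degrees in $f$ are preserved, all other degrees do not increase, and $t$-simplicity is preserved. The degree of $f$ in the line graph grows by $|h\cap f|-1\ge 1$.

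It remains to check that $b$ increases, and this is the main obstacle. We bound the new independent pairs created. Each $g_u$ is non-adjacent to at most the $\le k(D-1)$ neighbors of $f$ avoiding $u$, which gives $\Delta P\le |h\cap f|\cdot kD$. The gain is only $(|h\cap f|-1)/M$, so this crude bound fails. A sharper route is to do the replacement one vertex at a time. Remove only $u$'s membership by swapping $h$ for $h'=(h\setminus\{u\})\cup\{z\}$ with $z$ fresh, and add $g_u$. Now $h'$ still meets $f$, and its adjacencies inside $N(f)$ change only through edges at $u$. The net change is one new neighbor $g_u$ of $f$, contributing $+1/M$. Against this, the new pairs are at most about $\deg(f)$ pairs involving $g_u$, plus at most $D$ pairs involving $h'$. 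This gives $\Delta P\le k(D-1)+D$, which is still larger than $M$.

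So the delicate step is to account for $\Delta T\ge0$ and the pairs that are lost. In the swap, $g_u$ is adjacent to all edges through $u$, of which there are $D-1$. Hence $g_u$ contributes at most $(k-1)(D-1)$ new non-edges, not $k(D-1)$. Meanwhile $h'$ loses adjacency only to edges at $u$ that miss $h'$, and the pairs $(h',e)$ with $u\in e$ replace pairs that were already counted. Tallying these, I expect $\Delta P\le (k-1)(D-1)+(D-1)-1<kD$, which yields $\Delta b\ge 1/M-\Delta P/M^2>0$. That contradicts maximality and forces $\deg_{L(\cF)}(f)=kD-k$.
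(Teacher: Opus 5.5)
Your proposal is correct and is essentially the paper's proof. For the first claim you use the same pendant edge at a low-degree vertex of $f$. For the second claim you use the same swap of $h$ for $(h\setminus\{u\})\cup\{z\}$ plus a pendant edge at $u$; the paper reuses one fresh vertex so the two new edges intersect, which is immaterial. Your final tally $\Delta P\le kD-k-1<kD$ is right.

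The one step you flag but never write out, $\Delta T\ge 0$ for the swap, holds because any independent triple $\{h,g,g'\}$ stays independent with $h'$ in place of $h$, since $h'\cap g\subseteq h\cap g$ for every old edge $g$.
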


\begin{proof}[Proof of Lemma \ref{lem: structure1}]
    Label the vertices of $f$ as $v_1, v_2, \dots, v_k$. Suppose first, by way of contradiction, that some vertex of $f$ has degree less than $D$; without loss of generality, assume $\deg_{\cF}(v_1) < D$. Let $x_1, \dots, x_{k-1}$ be new vertices, and define a new hypergraph $\cF_1 \in H(k,t,D)$ by
    \[
      V(\cF_1) := V(\cF) \cup \{x_1,\dots,x_{k-1}\}
      \quad\text{and}\quad
      E(\cF_1) := E(\cF) \cup \{\{v_1,x_1,\dots,x_{k-1}\}\}.
    \]
    Let $f_1$ denote the edge $\{v_1,\dots,v_k\}$ in $\cF_1$. Now, we compare $f$ in the original hypergraph with $f_1$ in the new hypergraph.
    \begin{itemize}
        \item $\deg_{L(\cF_1)}(f_1) = \deg_{L(\cF)}(f) +1 $
        \item $P_{L(\cF_1)}(f_1) = P_{L(\cF)}(f) +\deg_{L(\cF)}(f) - (\deg_{\cF}(v_1)-1) $
        \item $T_{L(\cF_1)}(f_1) = T_{L(\cF)}(f) + P_{L(\cF)}(f)  - (\# \text{independent pairs involving edges containing } v_1) $
    \end{itemize}
    Combining these equations yields:
    \begin{align*}
        \theb_{L(\cF_1),kD}(f_1) &= \theb_{L(\cF),kD}(f) + \frac{1}{kD} - \frac{\deg_{L(\cF)}(f) - \deg_{\cF}(v_1)+1}{(kD)^2} \\ & \phantom{space it out}+ \frac{P_{L(\cF)}(f)-(\# \text{independent pairs involving edges containing } v_1)}{(kD)^3} \\
        &\ge \theb_{L(\cF),kD}(f) + \frac{1}{kD} - \frac{\deg_{L(\cF)}(f) +1}{(kD)^2}.
    \end{align*}
    Note that the number of independent pairs in $N_{L(\cF)}(f)$ involving edges containing $v_1$ is at most the total number of independent pairs, $P_{L(\cF)}(f)$, which is why we can drop the last term. Since $\deg_{L(\cF)}(f) < kD$, the above inequality implies $\theb_{L(\cF_1),kD}(f_1) > \theb_{L(\cF),kD}(f)$, which contradicts the choice of $(\cF, f)$. 

    The second part of the lemma is proved similarly: if $(\cF,f)$ are such that $f$ intersects fewer than $kD-k$ edges, then we can adjust to get a new pair $(\cF_2,f_2)$ with a parameter $\theb$ at least as large. In particular, if $(\cF,f)$ satisfies $\deg_{\cF}(v_i) = D$ but $f$ intersects fewer than $kD-k$ edges, then there must be an edge $e\in E(\cF)$ such that $|e\cap f| \ge 2$. Without loss of generality, assume $v_1,v_2 \in e$. Define $\cF_2 \in H(k,t,D)$ where $V(\cF_2) := V(\cF) \cup \{x_1,...,x_{k-1}\}$ and $E(\cF_2) := (E(\cF) \cup \{e_1 , e_2 \}) \setminus \{e\}$ where $e_1:= \{v_1,x_1,...,x_{k-1}\}$ and $e_2:=(e\cup\{x_1\})\setminus\{v_1\}$.

    Label the edge $\{v_1,...,v_k\} \in E(\cF_2)$ as $f_2$. Now, we compare $f$ from the original hypergraph to our new edge $f_2$. As before, $\deg_{L(\cF_2)}(f_2) = \deg_{L(\cF)}(f) +1 $. And since $T_{L(\cF_2)}(f_2) \ge T_{L(\cF)}(f)$ and $P_{L(\cF_2)}(f_2) < P_{L(\cF)}(f) +kD $, overall $\theb_{L(\cF_2),kD}(f_2) > \theb_{L(\cF),kD}(f)$. This contradicts the choice of $(\cF, f)$.
\end{proof}

Given a hypergraph $\cH$ and an edge $e\in E(\cH)$, let $X(e,\cH)\subseteq V(\cH)$ be the set of vertices outside $e$ that appear in some edge intersecting $e$. That is, $$X(e,\cH):= \bigcup \{h\in E(\cH): h\cap e\ne \emptyset\} \setminus e .$$

For vertices $x \in X(e,\cH)$ and $v\in e$, let $\deg_{\cH}(v,x)$ denote the number of edges of $\cH$ containing both $v$ and $x$.

The previous lemma, Lemma \ref{lem: structure1}, argued that a pair $(\cF,f)$ which maximizes the parameter $b(f)$ must have $\deg_{L(\cF)}(f) =kD-k$. This next lemma expands on that property to give some explicit structural information about such a pair $(\cF,f)$.

\begin{lemma}\label{lem: tableop new}
    Let $k,D \in \mathbb{N}$, let $\cH$ be a $k$-uniform hypergraph of maximum degree at most $D$, and let $e\in E(\cH)$. If $e$ intersects $kD-k$ edges of $\cH$, then the following holds:
    \begin{enumerate} [label=\textup{(S\arabic*)}]
        \item \label{case s1 new} $|f\cap e|\le 1$ for every $f\in E(\cH)\setminus \{e\}$;
        
        \item \label{case s2 new} $\sum_{v\in e} \deg_{\cH}(v,x) \le D$ for all $x\in X(e,\cH)$;

        \item \label{case s3 new} $\sum_{x\in X(e,\cH)} \deg_{\cH}(v,x) = (k-1)(D-1)$ for all $v\in e$. 
\end{enumerate}
\end{lemma}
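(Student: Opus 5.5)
Everything follows from a single double count. For each $v\in e$, the edges other than $e$ that contain $v$ number $\deg_\cH(v)-1\le D-1$. Summing over the $k$ vertices of $e$ gives
\[
\sum_{v\in e}(\deg_\cH(v)-1)=\sum_{f\in E(\cH)\setminus\{e\}}|f\cap e| \;\ge\; \#\{f\ne e: f\cap e\ne\emptyset\}=kD-k .
\]
The left side is at most $k(D-1)=kD-k$, so equality holds throughout. This gives $\deg_\cH(v)=D$ for every $v\in e$. It also gives $|f\cap e|\le 1$ for every $f\ne e$, since an edge meeting $e$ in two or more vertices would make the middle inequality strict. That is \ref{case s1 new}.

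For \ref{case s3 new}, fix $v\in e$ and count pairs $(f,x)$ with $f\ni v$, $f\ne e$ and $x\in f\setminus e$. Each of the $D-1$ edges $f\ne e$ through $v$ satisfies $f\cap e=\{v\}$ by \ref{case s1 new}. So each such $f$ contributes exactly $k-1$ vertices outside $e$, all of which lie in $X(e,\cH)$. Conversely, for $x\in X(e,\cH)$, the quantity $\deg_\cH(v,x)$ counts exactly the edges through both $v$ and $x$, and none of these is $e$ because $x\notin e$. Therefore $\sum_{x\in X(e,\cH)}\deg_\cH(v,x)=(k-1)(D-1)$.

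For \ref{case s2 new}, fix $x\in X(e,\cH)$. By \ref{case s1 new}, each edge containing $x$ meets $e$ in at most one vertex. Hence the families of edges containing $\{v,x\}$, for different $v\in e$, are pairwise disjoint subsets of the edges through $x$. This gives $\sum_{v\in e}\deg_\cH(v,x)\le\deg_\cH(x)\le D$.

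The only step needing care is the initial double count. It must be set up so that equality simultaneously forces full degree on $e$ and the intersection condition \ref{case s1 new}. Once that is in place, the other two items are routine counting. Multiplicities of edges cause no issue, because every count above is over edges taken with multiplicity.
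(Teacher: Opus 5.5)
Your proof is correct and follows essentially the same route as the paper. The paper proves (S1) by contradiction using the same count: an edge meeting $e$ in two vertices is counted twice, which leaves at most $kD-k-1$ edges meeting $e$. It then derives (S2) and (S3) with the same disjointness and $(k-1)$-vertices-per-edge arguments you use. Your version treats (S1) as the equality case of a single double count. This has the small advantage of stating explicitly that $\deg_{\cH}(v)=D$ for every $v\in e$, a fact the paper's proof of (S3) uses only implicitly.
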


\begin{proof}[Proof of Lemma \ref{lem: tableop new}]
    Label the vertices of $e$ as $v_1,v_2,...,v_k$, and label the vertices of $X(e,\cH)$ as $x_1,x_2,...,x_n$.
    
    First, we prove \ref{case s1 new} by contradiction. Suppose there exists $f\in E(\cH)\setminus \{e\}$ such that $|f\cap e|>1$. Without loss of generality, suppose $v_1,v_2\in f$. Let us count all the edges which intersect $e$. Because $\deg_{\cH}(v_i) \le D$, each $v_i$ is contained in at most $D-1$ edges which intersect $e$. Thus, the total number of edges which intersect $e$ is at most $k(D-1)=kD-k$. But since edge $f$ is counted twice (once for $v_1$ and for $v_2$), the total number of edges is actually at most $k(D-1)-1$, which is too small, a contradiction.

    Statement \ref{case s2 new} is also proved using the degree restriction. Because of \ref{case s1 new}, the set of edges containing $\{v_i,x_j\}$ is disjoint from the set of edges containing $\{v_i',x_j\}$, where $i,i'\in [k]$ are distinct. Thus $\sum_{i=1}^k \deg_{\cH}(v_i,x_j) \le \deg_{\cH}(x_j)\le D$.
    
    Let us now prove statement \ref{case s3 new}. Because $e$ intersects $kD-k$ edges, and because of \ref{case s1 new}, each $v_i$ is contained in $D-1$ edges intersecting $e$ (not including $e$ itself), and each of these edges contain exactly $k-1$ vertices from $X(e, \cH)$. 
    One such edge contributes $k-1$ to the sum $\sum_{j=1}^n \deg_{\cH}(v_i,x_j)$, and since there are $D-1$ such edges, we have $\sum_{j=1}^n \deg_{\cH}(v_i,x_j) = (k-1)(D-1)$.
\end{proof}

For an edge $e = \{v_1, v_2, \dots, v_k\}$ in a hypergraph $\cH$, for every $x\in X(e,\cH)$, define the \textit{weight} of $x$, denoted by $w_{\cH, e}(x)$, to be the value $$ w_{\cH, e}(x) := \sum_{1\le i< i'\le k} \deg_{\cH}(v_i,x) \cdot \deg_{\cH}(v_{i'},x). $$

For any set of vertices $X\subseteq V(\cH)$, define its \textit{weight} to be $w_{\cH, e}(X):= \sum_{x\in X} w_{\cH, e}(x)$. We will drop the subscripts $\cH$ and $e$ when these are clear from context. We also define the real-valued parameter $Y_{\cH}(e)$ as follows: $$Y_{\cH}(e) := \sum_{1\le i < j \le k } \sum_{e_1 \ni v_i, e_2\ni v_j} \max \{ 0, |e_1\cap e_2|-1\},$$
where the second sum is taken over edges $e_1,e_2\neq e$.

We define these two values, $w_{\cH,e}(X)$ and $Y_{\cH}(e)$, because they are useful for determining the number of edges in the line graph of $\cH$ induced on $N_{L(\cH)}(e)$. This, in turn, is useful for determining the number of independent pairs, which is used to calculate the parameter $b(f)$. The exact relationship using $w_{\cH,e}(X)$ and $Y_{\cH}(e)$ is given in this next lemma.

\begin{lemma}\label{lem: tableop new S4}
    Let $k,D \in \mathbb{N}$, let $\cH$ be a $k$-uniform hypergraph of maximum degree at most $D$, and let $e \in E(\cH)$, and let $G$ be the line graph of $\cH$ induced on $N_{L(\cH)}(e)$.
    If $e$ intersects $kD-k$ edges of $\cH$, then \label{case s4}
    $$ |E(G)| = k \binom{D-1}{2} + w_{\cH, e}(X) - Y_{\cH}(e). $$
\end{lemma}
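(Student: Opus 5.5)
We count the edges of $G$ by splitting pairs of neighbors of $e$ according to which vertices of $e$ they contain. By Lemma~\ref{lem: tableop new}\ref{case s1 new}, every edge $f\neq e$ meeting $e$ contains exactly one vertex of $e$. Hence the vertex set of $G$ is partitioned into classes $E_1,\dots,E_k$, where $E_i$ consists of the edges other than $e$ containing $v_i$. Since $e$ has $kD-k$ neighbors and each $v_i$ lies in at most $D-1$ edges besides $e$, each $|E_i|=D-1$. Any two edges in the same class $E_i$ share $v_i$ and are therefore adjacent in $G$. This contributes exactly $k\binom{D-1}{2}$ edges of $G$.

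It remains to count the adjacent pairs $\{e_1,e_2\}$ with $e_1\in E_i$, $e_2\in E_j$ and $i<j$. Such a pair is adjacent precisely when $e_1\cap e_2\neq\emptyset$. Because $e_1\cap e = \{v_i\}$ and $e_2\cap e=\{v_j\}$, the intersection $e_1\cap e_2$ lies inside $X(e,\cH)$. We double count, using the identity
\[
\mathbf{1}[e_1\cap e_2\ne\emptyset] = |e_1\cap e_2| - \max\{0,|e_1\cap e_2|-1\},
\]
which holds for every nonnegative integer value of $|e_1\cap e_2|$. Summing this over all $i<j$ and all $e_1\in E_i$, $e_2\in E_j$, the second term gives exactly $Y_{\cH}(e)$ by definition.

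For the first term, swap the order of summation:
\[
\sum_{i<j}\sum_{e_1\in E_i,\,e_2\in E_j}|e_1\cap e_2| = \sum_{x\in X(e,\cH)}\sum_{i<j} \#\{(e_1,e_2): e_1\in E_i,\ e_2\in E_j,\ x\in e_1\cap e_2\}.
\]
For fixed $x$ and $i<j$, the number of such pairs is $\deg_{\cH}(v_i,x)\deg_{\cH}(v_j,x)$. This uses that $e$ itself does not contain $x$, so every edge containing $\{v_i,x\}$ lies in $E_i$. Summing over $i<j$ gives $w_{\cH,e}(x)$, and summing over $x$ gives $w_{\cH,e}(X)$. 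Adding the contributions yields $|E(G)| = k\binom{D-1}{2}+w_{\cH,e}(X)-Y_{\cH}(e)$.

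There is no serious obstacle here. The only care needed is in two places. First, the classes $E_i$ must be disjoint and of size exactly $D-1$, which follows from \ref{case s1 new} and the count $kD-k$. Second, the pair-counting must be consistent with the definition of $Y_{\cH}(e)$: there the sum is over edges $e_1\ni v_i$, $e_2\ni v_j$ different from $e$, and these are exactly the classes $E_i$, $E_j$. If the hypergraph is a multihypergraph, parallel edges are counted as distinct elements, and the identity above still applies pair by pair.
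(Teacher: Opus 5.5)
Your proof is correct and follows the same route as the paper. Both arguments split $G$ into $k$ cliques of size $D-1$ (one per vertex of $e$), which contribute $k\binom{D-1}{2}$ edges, and then count the cross-clique adjacencies via $\mathbf{1}[e_1\cap e_2\neq\emptyset]=|e_1\cap e_2|-\max\{0,|e_1\cap e_2|-1\}$, with $\sum|e_1\cap e_2|=w_{\cH,e}(X)$. You are somewhat more explicit than the paper in two places: you invoke (S1) to make the cliques disjoint, and you spell out the double count giving $w_{\cH,e}(X)$.
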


\begin{proof}[Proof of Lemma \ref{lem: tableop new S4}]

    Label the vertices of $e$ as $v_1,v_2,...,v_k$. Also, let $X$ denote $X(e,\cH)$, and label its elements as $x_1,x_2,...,x_n$.

Let us consider the structure of $G$. Because $e$ intersects $kD-k$ edges, and since $\cH$ is $k$-uniform, necessarily $\deg_{\cH}(v_i) = D$ for each $i\in [k]$. Because of this, $G$ consists of $k$ cliques of size $D-1$, one for each $v_i$, with some number of additional edges between them. There are $k\binom{D-1}{2}$ edges coming from within these cliques.

    All other edges in $G$ go between cliques. Suppose there is an edge occurring between the clique associated with $v_i$ and that of $v_{i'}$. This happens if and only if there is some $x_j\in X$ and two edges $e_1,e_2$ in $\cH$ such that $\{v_i,x_j\} \subset e_1$ and $\{v_{i'},x_j\} \subset e_2$. Thus, $\{e_1,e_2\}$ is one edge in $G$, and this pair also contributes one to the value of $w_{\cH,e}(x_j)$. However, if $e_1,e_2$ overlap on multiple vertices from $X$, then it will be overcounted. 

    More explicitly, the number of edges in $G$ can be calculated as follows:
    $$|E(G)| = k\binom{D-1}{2} + \sum_{1\le i < j \le k} \sum_{e_1 \ni v_i, e_2\ni v_j} \mathbbm{1}[e_1\cap e_2\ne \emptyset].$$
    Additionally,
\begin{align*}
  \sum_{1 \leq i < j \leq k}\sum_{e_1 \ni v_i, e_2 \ni v_j}\mathbbm{1}[e_1 \cap e_2 \neq \emptyset]
  &= \sum_{1 \leq i < j \leq k}\sum_{e_1 \ni v_i, e_2 \ni v_j}\min\{1, |e_1 \cap e_2|\}\\
  &= w_{\cH,e}(X) - \sum_{1 \leq i < j \leq k}\sum_{e_1 \ni v_i, e_2 \ni v_j}\max\{0, |e_1 \cap e_2| - 1\}.
\end{align*}
Combining the two equalities yields the desired result.
\end{proof}


\subsection{Proof of Lemma~\ref{lem: 3,2 a parameter}}\label{subsection: 3,2 a param proof}

  This subsection is devoted to the proof of Lemma~\ref{lem: 3,2 a parameter}. To that end, throughout this subsection we assume $D$ is an integer sufficiently large to satisfy various inequalities, and we let $\cF \in H(3,2,D)$ and $f \in E(\cF)$ such that
  \begin{equation}
    \label{eq:3-uniform-2-simple-maxF}
    b_{L(\cF),3D}(f) = \max_{\cH\in H(3,2,D)}\max_{e\in E(\cH)} b_{L(\cH),3D}(e).
  \end{equation}

  From Lemma~\ref{lem: structure1}, we know that $f$ intersects $3(D-1)$ edges and each vertex of $f$ is in $(D-1)$ edges other than $f$.

  Let $X \coloneqq X(f, \cF)$.
  Let $G$ be the subgraph of the line graph of $\cF$ induced on $N_{L(\cF)}(f)$, and note that $P_{L(\cF)}(f)$ and $T_{L(\cF)}(f)$ are the number of edges and triangles in $\overline G$, respectively (we will omit the subscripts moving forward). Since $\binom{3D - 3}{2} - 3\binom{D - 1}{2} = 3D^2 - 6D + 3$ and $Y_{\cF}(f) \geq 0$, by Lemma~\ref{lem: tableop new S4} with $\overline G$ playing the role of $G$, we have
  \begin{equation}
    \label{eq:3-uniform-2-simple-pairs-bound}
    P(f) \geq 3D^2 - 6D + 3 - w_{\cF,f}(X).
  \end{equation}

	  Let $X_s \coloneqq \{x \in X : \sum_{v\in f}\deg_{\cF}(v,x) < D^{2/3}\}$, and let $X_b \coloneqq X \setminus X_s$.

    The vertices in $X_s$ have small total degree into $f$, so their contribution to $w(X)$ is negligible; the next two lemmas make this precise and control the degrees from $f$ into $X_b$.

	  \begin{lemma}\label{lem:degree-to-big-bound}
    For every $v \in f$,
    \begin{equation*}
      \sum_{x \in X_b}\deg(v, x) \leq D + 18D^{2/3}.
    \end{equation*}
  \end{lemma}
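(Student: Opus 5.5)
The idea is to count, for a fixed $v \in f$, the edges through $v$ according to how many of their vertices lie in $X_b$. By Lemma~\ref{lem: structure1}, $f$ intersects $3D-3$ edges, so Lemma~\ref{lem: tableop new} applies to $(\cF,f)$. By \ref{case s1 new}, every edge $h \ne f$ containing $v$ meets $f$ only in $v$. Its two remaining vertices therefore both lie in $X$, and there are exactly $D-1$ such edges.

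Each such edge $h$ contributes $|h \cap X_b| \in \{0,1,2\}$ to the sum $\sum_{x\in X_b}\deg(v,x)$. Hence
\begin{equation*}
  \sum_{x \in X_b}\deg(v,x) \le (D-1) + \#\{h \ni v : h \neq f,\ h\setminus\{v\} \subseteq X_b\}.
\end{equation*}
It thus suffices to show that at most $18D^{2/3}$ edges through $v$ have both of their other vertices in $X_b$.

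Next we bound $|X_b|$ by double counting. By \ref{case s3 new} with $k=3$,
\begin{equation*}
  \sum_{x\in X}\sum_{u\in f}\deg(u,x) = \sum_{u \in f}\sum_{x \in X}\deg(u,x) = 3\cdot 2(D-1) < 6D.
\end{equation*}
Every $x \in X_b$ contributes at least $D^{2/3}$ to the left-hand side, so $|X_b| < 6D^{1/3}$.

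Finally, an edge through $v$ with both other vertices in $X_b$ is $\{v,x,y\}$ for some pair $\{x,y\}\subseteq X_b$. Distinct such edges correspond to distinct pairs. Indeed, two edges $\{v,x,y\}$ would be a repeated edge, and two distinct edges sharing all $3$ vertices are forbidden by $2$-simplicity. Hence the number of such edges is at most
\begin{equation*}
  \binom{|X_b|}{2} \le \frac{(6D^{1/3})^2}{2} = 18D^{2/3}.
\end{equation*}
Combining this with the displayed reduction gives the lemma.

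I do not expect a serious obstacle. The only subtle points are the use of \ref{case s1 new}, which ensures each edge through $v$ has exactly two vertices in $X$, and the observation that $2$-simplicity rules out multi-edges. The latter is what makes the pair count valid in the multihypergraph setting.
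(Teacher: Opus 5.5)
Your proof is correct and follows essentially the same route as the paper. Both arguments bound $|X_b|\le 6D^{1/3}$ by double counting via \ref{case s3 new}, then use $2$-simplicity to show that at most $\binom{|X_b|}{2}\le 18D^{2/3}$ edges through $v$ have both of their other vertices in $X_b$; the only cosmetic difference is that the paper lower-bounds $\sum_{x\in X_s}\deg(v,x)$ and subtracts it from $2(D-1)$, whereas you bound each edge's contribution $|h\cap X_b|$ directly.
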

  \begin{proof}
  Since $\cF$ has maximum degree at most $D$, by definition of $X_b$, we have
  \begin{equation*}
    |X_b|\cdot D^{2/3} \leq \sum_{x \in X_b}\sum_{v\in f}\deg_{\cF}(v,x) \leq \sum_{x \in X}\sum_{v\in f}\deg_{\cF}(v,x) = 6(D - 1),
  \end{equation*}
  so
  \begin{equation*}
    \label{eq:3-uniform-2-simple-Xb-bound}
    |X_b| \leq 6D^{1/3}.
  \end{equation*}
  Let $v \in f$.
  Since $\cF$ is $2$-simple, there are at most $\binom{|X_b|}{2}$ edges in $\cF$ of the form $\{v,x,x'\}$ where $x,x' \in X_b$, so by the previous inequality,
  \begin{equation*}
    \sum_{x \in X_s}\deg_{\cF}(v, x) \geq D - 1 - \binom{|X_b|}{2} \geq D - 18D^{2/3}.
  \end{equation*}
  Hence, since $\sum_{x \in X_b}\deg(v, x) = \sum_{x \in X}\deg(v, x) - \sum_{x \in X_s}\deg(v, x) = 2(D-1) - \sum_{x \in X_s}\deg(v, x)$, by the previous inequality, we have $\sum_{x \in X_b}\deg(v,x) \leq D + 18D^{2/3}$, as desired.
  \end{proof}

  \begin{lemma}\label{lem:3uniform-2simple-pairs-bound}
    $P(f) \geq 3D^2 - 6D + 3 - w(X_b) - 2D^{5/3}$.
  \end{lemma}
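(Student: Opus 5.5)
Starting from \eqref{eq:3-uniform-2-simple-pairs-bound}, $P(f) \ge 3D^2 - 6D + 3 - w(X)$, and since $w(X) = w(X_b) + w(X_s)$, it suffices to show $w(X_s) \le 2D^{5/3}$. So the plan is simply to bound the weight contributed by the vertices of small total degree into $f$.

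For $x \in X_s$, write $d_i(x) := \deg_{\cF}(v_i,x)$ for $v_i \in f = \{v_1,v_2,v_3\}$ and $s(x) := d_1(x)+d_2(x)+d_3(x)$. By Fact~\ref{lem: ops1} with $k=3$, we have $w(x) = \sum_{i<i'} d_i(x)d_{i'}(x) \le \tfrac13 s(x)^2$. Since $x \in X_s$, $s(x) < D^{2/3}$, so $w(x) \le \tfrac13 D^{2/3} s(x)$. Summing over $X_s$,
\begin{equation*}
  w(X_s) \le \tfrac13 D^{2/3} \sum_{x \in X_s} s(x) \le \tfrac13 D^{2/3} \sum_{x \in X}\sum_{v \in f}\deg_{\cF}(v,x) = \tfrac13 D^{2/3}\cdot 6(D-1),
\end{equation*}
where the last equality is \ref{case s3 new} of Lemma~\ref{lem: tableop new} with $k=3$ (applicable since $f$ intersects $3D-3$ edges by Lemma~\ref{lem: structure1}), giving $\sum_{x}\deg(v,x)=2(D-1)$ for each of the three $v\in f$. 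Hence $w(X_s) \le 2D^{2/3}(D-1) \le 2D^{5/3}$, and combining with \eqref{eq:3-uniform-2-simple-pairs-bound} yields the claim.

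There is no real obstacle; the only point to be careful about is that the weaker bound $w(x)\le \tfrac13 s(x)^2$ (rather than the trivial $\tfrac12 s(x)^2$) is what produces the constant exactly $2$, and that the total sum of codegrees is $6(D-1)$, which is already used in the proof of Lemma~\ref{lem:degree-to-big-bound}.
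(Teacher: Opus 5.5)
Your proof is correct and is essentially the paper's argument: the paper gets $w(X_s)\le 2D^{5/3}$ by citing Lemma~\ref{lem: tableops redo} with $k=3$, $C=D^{2/3}$, $T=6(D-1)$, and your per-vertex use of Fact~\ref{lem: ops1} followed by $s(x)^2\le D^{2/3}s(x)$ is simply a direct derivation of that lemma's bound $\frac{k-1}{2k}TC$ in this case. One small wording slip: $\tfrac13 s(x)^2$ is the sharper bound compared with $\tfrac12 s(x)^2$, not the weaker one.
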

  \begin{proof}
    By Lemma~\ref{lem: tableops redo} with $k=3$, $n=|X_s|$, $C = D^{2/3}$, and $6(D - 1) = T$, we have
    \begin{equation*}
      w(X_s) \leq \frac{2}{6}\left(6(D - 1)D^{2/3}\right) \leq 2D^{5/3}.
    \end{equation*}
    Hence, since $w(X) = w(X_b) + w(X_s)$, the inequality follows from \eqref{eq:3-uniform-2-simple-pairs-bound}.
  \end{proof}

  We next bound the number of independent triples by separating off the contribution from edges using vertices outside the three vertices of $X_b$ with largest total degree into $f$. To that end, label the elements of $X_b$ as $x_1, \dots, x_n$, where $\sum_{v\in f}\deg_{\cF}(v,x_i) \geq \sum_{v\in f}\deg_{\cF}(v,x_{i+1})$ for each $i \in [n-1]$, and let $f = \{v_1,v_2,v_3\}$.
	  
	  \begin{lemma}\label{lem:3uniform-2-simple-triples-bound}
    \begin{equation*}
      T(f) \leq \sum_{(i,j,k)\in S_3}\deg(v_i, x_1)\cdot \deg(v_j, x_2)\cdot \deg(v_k, x_3) + 3D^3 + 6D^2 - D^2\sum_{i=1}^3\sum_{j=1}^3\deg(v_i, x_j),
    \end{equation*}
    where $S_3$ is the symmetric group of $3$ elements.
  \end{lemma}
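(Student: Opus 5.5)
The plan is to write $T(f)$ as a count of ordered triples of edges and split it according to whether the edges use the three heaviest vertices $x_1,x_2,x_3$ of $X_b$. By Lemma~\ref{lem: structure1} and \ref{case s1 new} of Lemma~\ref{lem: tableop new}, $N_{L(\cF)}(f)$ is the disjoint union of three cliques $E_1,E_2,E_3$, where $E_i$ is the set of $D-1$ edges other than $f$ containing $v_i$. Each $g\in E_i$ has the form $\{v_i,a,b\}$ with $a,b\in X$. An independent triple therefore contains exactly one edge from each $E_i$, so
\begin{equation*}
  T(f)=\#\{(e_1,e_2,e_3)\in E_1\times E_2\times E_3 : e_1,e_2,e_3 \text{ pairwise disjoint}\}.
\end{equation*}
Write $d_{i,j}:=\deg(v_i,x_j)$ and $s_i:=\sum_{j=1}^3 d_{i,j}$.

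For each $i$, let $A_i\subseteq E_i$ be the set of edges containing at least one of $x_1,x_2,x_3$. Since $\cF$ is $2$-simple, for each pair $\{x_j,x_{j'}\}$ there is at most one edge $\{v_i,x_j,x_{j'}\}$. Hence at most $3$ edges of $E_i$ are counted twice in $s_i$, which gives $|A_i|\ge s_i-3$. Combined with $|A_i|\le D-1$, this yields
\begin{equation*}
  0\le D-1-|A_i|\le D+2-s_i .
\end{equation*}

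Now split the triples into two classes.
\begin{itemize}
  \item[(a)] All three edges lie in $A_1\times A_2\times A_3$. Assign to each $e_i$ one of the $x_j$ it contains. Pairwise disjointness forces the assigned $x$'s to be distinct, so the assignment is a permutation. Each triple is counted at least once when we sum over permutations $\sigma$ and over choices of an edge through $\{v_i,x_{\sigma(i)}\}$ for every $i$. This bounds class (a) by $\sum_{\sigma\in S_3}\prod_i d_{i,\sigma(i)}$, which is the permanent term in the statement.
  \item[(b)] Some $e_i\notin A_i$. Choose the smallest such $i$, and bound the other two coordinates trivially by $D-1\le D$ choices each. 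This gives at most
  \begin{equation*}
    \sum_{i=1}^3 (D-1-|A_i|)D^2 \le \sum_{i=1}^3 (D+2-s_i)D^2 = 3D^3+6D^2-D^2\sum_{i=1}^3\sum_{j=1}^3 d_{i,j}.
  \end{equation*}
\end{itemize}
Adding the two classes gives exactly the claimed inequality.

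The only delicate point is the inequality $|A_i|\ge s_i-3$, which is what produces the constant $6D^2$. It depends on $2$-simplicity: an edge $\{v_i,x_j,x_{j'}\}$ is double-counted in $s_i$, but at most one such edge exists per pair $\{j,j'\}$. I would also check the degenerate case $n<3$, where $x_2$ or $x_3$ does not exist. There the corresponding $d_{i,j}$ are set to $0$ and the same argument goes through verbatim.
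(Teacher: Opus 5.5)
Your proof is correct and is essentially the paper's argument. Your sets $E_i\setminus A_i$ together make up the paper's class $V_1$ of edges avoiding $x_1,x_2,x_3$, and your per-vertex bound $|A_i|\ge s_i-3$ is the paper's bound $|V_2|\le 9$ (both come from $2$-simplicity); the only cosmetic difference is that you bound class (a) by the permanent through the assignment map, whereas the paper first observes that any triangle meeting $V_2$ also meets $V_1$ and then bounds the triangles inside $V_3$ by the permanent.
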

  \begin{proof}

    Partition $N_{L(\cF)}(f)$ into $V_1 \cup V_2 \cup V_3$, as follows. Let $V_1$ be the set of edges $g \in E(\cF)$ such that $|g \cap (X\setminus\{x_1,x_2,x_3\})| = 2$, let $V_2$ be the set of edges $g \in E(\cF)$ such that $|g \cap \{x_1,x_2,x_3\}| = 2$, and let $V_3 = N_{L(\cF)}(f) \setminus (V_1 \cup V_2)$. Since every $g \in N_{L(\cF)}(f)$ intersects $f$ in one vertex, we have $|g \cap \{x_1,x_2,x_3\}| = |g \cap (X \setminus \{x_1,x_2,x_3\})| = 1$ for every $g \in V_3$.

    Since $f$ intersects $3(D - 1)$ edges, we have $|V_1| + |V_2| + |V_3| = 3(D - 1)$, and $\sum_{i=1}^3\sum_{j=1}^3 \deg(v_i, x_j) = 2|V_2| + |V_3|$.
    Hence, 
    \begin{equation}\label{eq:3uniform-2simple-V1bound}
      |V_1| = 3(D-1) + |V_2| - \left(\sum_{i=1}^3\sum_{j=1}^3 \deg(v_i, x_j)\right) \leq 3D + 6 - \left(\sum_{i=1}^3\sum_{j=1}^3 \deg(v_i, x_j)\right),
    \end{equation}
    where in the last inequality we used $|V_2| \leq 9$, which follows because $\cF$ is $2$-simple.

    The number of triangles of $\overline G$ involving at least one vertex of $V_1$ is at most $|V_1|D^2$.
    Since $\cF$ is $2$-simple, every triangle containing some vertex in $V_2$ also has at least one vertex in $V_1$.
    Hence, the remaining triangles of $\overline G$ have vertices $g_1, g_2, g_3 \in V_3$, such that $x_i,v_1 \in g_1$, $x_j,v_2\in g_2$, and $x_k,v_3 \in g_3$ for distinct $i,j,k\in[3]$.
    Therefore, the number of triangles in $\overline G$ is at most
    \begin{equation*}
      T(f) \leq \sum_{(i,j,k)\in S_3}\deg(v_i, x_1)\cdot \deg(v_j, x_2)\cdot \deg(v_k, x_3) + D^2|V_1|,
    \end{equation*}
    and the result follows by combining the previous inequality with \eqref{eq:3uniform-2simple-V1bound}.
	  \end{proof}

  We now combine the pair and triple estimates above and finish by applying Lemma~\ref{lem:3xn-matrix-bound} to the resulting expression.

	  \begin{proof}[Proof of Lemma~\ref{lem: 3,2 a parameter}]
    By Lemmas~\ref{lem:3uniform-2simple-pairs-bound} and \ref{lem:3uniform-2-simple-triples-bound}, we have
    \begin{multline*}
      b_{L(\cF), 3D}(f) \leq 1 - \left(\frac{3D^2}{(3D)^2}\right) + \left(\frac{w(X_b)}{(3D)^2}\right) + \left(\frac{2D^{5/3} + 6D}{(3D)^2}\right) + \left(\frac{3D^3 + 6D^2}{(3D)^3}\right) +\\
      \left(\frac{\sum_{(i,j,k)\in S_3}\deg(v_i, x_1)\cdot \deg(v_j, x_2)\cdot \deg(v_k, x_3)}{(3D)^3}\right) - \left(\frac{D^2\sum_{i=1}^3\sum_{j=1}^3 \deg(v_i, x_j)}{(3D)^3}\right).
    \end{multline*}
    Note that
    \begin{equation*}
      1 - \left(\frac{3D^2}{(3D)^2}\right) + \left(\frac{2D^{5/3} + 6D}{(3D)^2}\right) + \left(\frac{3D^3 + 6D^2}{(3D)^3}\right) \leq 1 - \frac{2}{9} + o(1),
    \end{equation*}
    \begin{equation*}
      \frac{w(X_b)}{(3D)^2} = \frac{1}{9}\sum_{j=1}^n\sum_{1 \leq i < i' \leq 3}\left(\frac{\deg(v_i,x_j)}{D}\frac{\deg(v_{i'},x_j)}{D}\right),
    \end{equation*}
    \begin{equation*}
      \frac{\sum_{(i,j,k)\in S_3}\deg(v_i, x_1)\cdot \deg(v_j, x_2)\cdot \deg(v_k, x_3)}{(3D)^3} = \frac{1}{27}\sum_{(i,j,k)\in S_3}\left(\frac{\deg(v_i,x_1)}{D}\frac{\deg(v_j,x_2)}{D}\frac{\deg(v_k,x_3)}{D}\right),
    \end{equation*}
    and
    \begin{equation*}
      \frac{D^2\sum_{i=1}^3\sum_{j=1}^3 \deg(v_i, x_j)}{(3D)^3}
      =  \frac{1}{27}\sum_{i=1}^3\sum_{j=1}^3\left(\frac{\deg(v_i, x_j)}{D}\right).
    \end{equation*}

  We apply Lemma~\ref{lem:3xn-matrix-bound} to the matrix $A = (a_{i,j})_{i\in[3],j\in[n]}$ where $a_{i,j} \coloneqq \deg(v_i, x_j) / (D + 18 D^{2/3})$. We have $\sum_{i=1}^3 a_{i,j} \leq \deg(x_j) / (D + 18 D^{2/3}) < 1$ for each $j \in [n]$, and by Lemma~\ref{lem:degree-to-big-bound}, we have $\sum_{i=1}^3\sum_{j=1}^n a_{i,j} \leq 3$. Therefore, combining the inequalities above with Lemma~\ref{lem:3xn-matrix-bound}, we have
  \begin{equation*}
    b_{L(\cF),3D}(f) \leq 1 - \frac{2}{9} + o(1) + \frac{D + 18D^{2/3}}{D}\left(\frac{2}{3^5}\right) = 1 - \frac{2}{9} + \frac{2}{3^5} + o(1),
  \end{equation*}
  as desired.
\end{proof}

\subsection{Proof of Lemma~\ref{lem: 3,3 a parameter}}\label{subsection: 3,3 param proof}
This subsection is devoted to the proof of Lemma~\ref{lem: 3,3 a parameter}. To that end, throughout this subsection we assume $D$ is an integer sufficiently large to satisfy various inequalities, and we let $\cF \in H(3,3,D)$ and $f \in E(\cF)$ such that
  \begin{equation}
    \label{eq:3-uniform-3-simple-maxF}
    b_{L(\cF),3D}(f) = \max_{\cH\in H(3,3,D)}\max_{e\in E(\cH)} b_{L(\cH),3D}(e).
  \end{equation}

  From Lemma~\ref{lem: structure1}, we know that $f$ intersects $3(D-1)$ edges and each vertex of $f$ is in $(D-1)$ edges other than $f$.

  Let $X \coloneqq X(f, \cF)$ and $Y \coloneqq Y_{\cF}(f)$.
  Let $G$ be the line graph of $\cF$ induced on $N_{L(\cF)}(f)$, and note that $P(f)$ and $T(f)$ are the number of edges and triangles in $\overline G$, respectively. Since $\binom{3D - 3}{2} - 3\binom{D - 1}{2} = 3D^2 - 6D + 3$, by Lemma~\ref{lem: tableop new S4} with $\overline G$ playing the role of $G$, we have
  \begin{equation}
    \label{eq:3-uniform-3-simple-pairs-bound}
    P(f) = 3D^2 - 6D + 3 - w(X) + Y.
  \end{equation}

  The next two lemmas show that, for this maximizing pair, $P(f)$ is close to $D^2-9D$; this will let us write $P(f)=(1+\delta)D^2-9D$ with $\delta$ small.

\begin{lemma} \label{lem: 33pairs}
    $P(f) \geq D^2 - 9D + Y$.
\end{lemma}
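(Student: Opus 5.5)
The plan is to start from the exact identity \eqref{eq:3-uniform-3-simple-pairs-bound}, $P(f) = 3D^2 - 6D + 3 - w(X) + Y$, and reduce the claim to an upper bound on $w(X)$ of the form $w(X) \le 2D^2 + O(D)$. Substituting such a bound gives $P(f) \ge D^2 - O(D) + Y$, and we only need the constant in the $O(D)$ term to be at most $9$. The term $Y$ carries through untouched, which is exactly why the statement includes it.

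To bound $w(X)$, we apply Lemma~\ref{lem: tableops redo} with $k=3$ and $n=|X|$, taking the entries to be $a_{i,j} := \deg_{\cF}(v_i,x_j)$ for $f=\{v_1,v_2,v_3\}$ and $X=\{x_1,\dots,x_n\}$. With this choice the left-hand side of Lemma~\ref{lem: tableops redo} is exactly $w_{\cF,f}(X)$. The hypotheses come from Lemma~\ref{lem: tableop new}, which applies because Lemma~\ref{lem: structure1} tells us that $f$ meets exactly $3D-3$ edges.
\begin{itemize}
\item Property \ref{case s2 new} gives the column bound $\sum_{i} a_{i,j} \le D$, so we may take $C = D$.
\item Property \ref{case s3 new} gives $\sum_{j} a_{i,j} = 2(D-1)$ for each $i$, so the total is $T = 6(D-1)$.
\end{itemize}
The weaker conclusion of Lemma~\ref{lem: tableops redo} then yields
\[
w(X) \le \frac{2}{6}\, T C = \frac{1}{3}\cdot 6(D-1)\cdot D = 2D^2 - 2D .
\]

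Plugging this into \eqref{eq:3-uniform-3-simple-pairs-bound} gives
\[
P(f) \ge 3D^2 - 6D + 3 - 2D^2 + 2D + Y = D^2 - 4D + 3 + Y,
\]
which is at least $D^2 - 9D + Y$ for every $D \ge 1$.

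I do not expect a real obstacle here. The only point to check is that the hypotheses of Lemma~\ref{lem: tableop new} hold, and that is guaranteed by the maximality of $(\cF,f)$ via Lemma~\ref{lem: structure1}. The slack between $-4D$ and $-9D$ suggests the authors used a cruder bound somewhere, for instance using $T \le 6D$ instead of $6(D-1)$, or the $qC^2 + r^2$ form with rounding. Either variant also fits comfortably within the claimed constant.
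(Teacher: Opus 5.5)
Your proposal is correct and is essentially the paper's proof: it applies Lemma~\ref{lem: tableops redo} with $k=3$, $C=D$, $T=6(D-1)$ to bound $w(X)$, then substitutes into \eqref{eq:3-uniform-3-simple-pairs-bound}. The only difference is that the paper rounds up to $w(X)\le 2D^2$ and gets $P(f)\ge D^2-6D+3+Y$, while you keep $w(X)\le 2D^2-2D$ and get $P(f)\ge D^2-4D+3+Y$; either implies the stated bound.
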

  \begin{proof}
    Apply Lemma \ref{lem: tableops redo} with $k=3$, $n=|X|$, $C=D$, and $T=6(D-1)$ to get 
    \begin{align*}
        w(X) & \le \frac13 (6(D-1))(D) \le 2D^2.
    \end{align*}
    The result follows by combining the inequality above with \eqref{eq:3-uniform-3-simple-pairs-bound}.
  \end{proof}

\begin{lemma} \label{lem: delta 1.031}
    $P(f) \le (1.031)D^2-9D$.
\end{lemma}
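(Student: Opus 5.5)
The plan is to compare $b(f)$ with the lower bound coming from the extremal construction. We bound $T(f)$ from above in terms of $P(f)$ using the tripartite structure of $\overline G$. If $P(f)$ were larger than $(1.031)D^2-9D$, the resulting upper bound on $b(f)$ would fall strictly below the value attained by the affine-plane example. That contradicts the maximality assumption \eqref{eq:3-uniform-3-simple-maxF}.

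\emph{Lower bound.} By Lemma~\ref{lem: extreme kk and 33}, there is some $\cH\in H(3,3,D)$ and $e\in E(\cH)$ with $b_{L(\cH),3D}(e)\ge 1-\frac19+\frac1{3^5}-o(1)$. So by \eqref{eq:3-uniform-3-simple-maxF} we have $b_{L(\cF),3D}(f)\ge 1-\frac19+\frac1{243}-o(1)$.

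\emph{Upper bound on $T(f)$.} Since $f$ intersects $3(D-1)$ edges, each meeting $f$ in exactly one vertex by Lemma~\ref{lem: tableop new}\ref{case s1 new}, the set $N_{L(\cF)}(f)$ splits into three sets, namely the edges through $v_1$, $v_2$ and $v_3$. Each of these is a clique in $G$, hence an independent set in $\overline G$. So $\overline G$ is a $3$-partite graph with $P(f)$ edges. Lemma~\ref{lem: trianglecount1} (in the form noted after it) then gives $T(f)\le (P(f)/3)^{3/2}$.

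\emph{Combining.} Using $\deg_{L(\cF)}(f)=3D-3$, we get
\[
b(f)\le 1-\frac{P(f)}{9D^2}+\frac{(P(f)/3)^{3/2}}{27D^3}.
\]
Write $P(f)=(1+\delta)D^2-9D$; Lemma~\ref{lem: 33pairs} gives $\delta\ge 0$ because $Y\ge 0$. The $9D$ term only changes things by $O(1/D)$, so
\[
b(f)\le 1+h(\delta)+o(1),\qquad h(\delta):=-\frac{1+\delta}{9}+\frac{(1+\delta)^{3/2}}{27\cdot 3^{3/2}}.
\]

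\emph{Analysing $h$.} For $\delta$ in any bounded range, $h'(\delta)=-\frac19+\frac{(1+\delta)^{1/2}}{18\sqrt3}$ is negative and close to $-0.1$, so $h$ is strictly decreasing there. Note that $P(f)\le\binom{3D-3}{2}$ bounds $\delta\le 4$, and $h'<0$ on all of $[0,4]$. At $\delta=0$ we have $h(0)\approx -0.10398$. The target is $-\frac19+\frac1{243}\approx-0.10700$, so we need $h$ to drop by about $0.0030$, which happens at $\delta\approx 0.030$. Hence $h(0.031)<-\frac19+\frac1{243}-c$ for some absolute constant $c>0$.

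Now suppose $\delta>0.031$. Monotonicity gives $b(f)\le 1-\frac19+\frac1{243}-c+o(1)$. For $D$ large this contradicts the lower bound. Therefore $\delta\le 0.031$, which is the claim.

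The main obstacle is the numerical verification. We must check carefully that $h(0.031)$ lies strictly below $-\frac19+\frac1{243}$, since the margin is only about $10^{-4}$. We also need monotonicity on the whole admissible range of $\delta$, up to the trivial bound $P(f)\le\binom{3D-3}{2}$. I would do this by evaluating $(1.031/3)^{3/2}$ to enough decimal places and checking the sign of $h'$ explicitly. The triangle bound itself is immediate from Lemma~\ref{lem: trianglecount1}, so no further structural input should be needed.
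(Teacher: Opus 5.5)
Your proposal is correct and follows the paper's proof essentially exactly. Both bound $T(f)\le (P(f)/3)^{3/2}$ using Lemma~\ref{lem: trianglecount1} on the tripartite graph $\overline G$, show that $P(f)>(1.031)D^2-9D$ forces $b_{L(\cF),3D}(f)\le 0.89291$, and contradict the lower bound of about $0.893$ from Lemma~\ref{lem: extreme kk and 33}. One small slip: $h'(\delta)=-\frac19+\frac{(1+\delta)^{1/2}}{54\sqrt3}$, not $\frac{(1+\delta)^{1/2}}{18\sqrt3}$; this does not matter, since $h'<0$ on the admissible range either way and your value ``close to $-0.1$'' matches the correct formula.
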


\begin{proof}
    First, recall that $T(f)$ is the number of triangles in $\overline{G}$. Hence, by Lemma \ref{lem: trianglecount1} with $\overline{G}$ and $P(f)$ playing the roles of $G$ and $m$, respectively, we have
    \begin{equation*}
      T(f) \leq \left(\frac{P(f)}{3}\right)^{3/2}.
    \end{equation*}
    Therefore, if $P(f) > (1.031)D^2 - 9D$, then for sufficiently large $D$, 
    \begin{align*}
      b_{L(\cF),3D}(f) \leq \frac{3(D-1)}{3D} - \frac{P(f)}{(3D)^2} + \frac{({P(f)}/{3})^{3/2}}{(3D)^3} \le 0.89291.
    \end{align*}
    However, by Lemma \ref{lem: extreme kk and 33},
    \begin{equation*}
      \max_{\cH,e}b_{L(\cH), 3D}(e) \geq 1-\frac19 + \frac{1}{3^5} - o(1) \ge 0.893,
    \end{equation*}
    contradicting \eqref{eq:3-uniform-3-simple-maxF}. 
\end{proof}

Let $X_s$ be the maximal set of vertices in $X$ of lowest degree into $f$ such that
\begin{equation*}
  \sum_{v\in f}\sum_{x \in X_s} \deg_{\cF}(v,x) \leq D/2
\end{equation*}
(i.e.\ such that $\sum_{v\in f}\deg_\cF(v,x) \leq \sum_{v\in f}\deg_\cF(v,x')$ whenever $x \in X_s$ and $x'\in X \setminus X_s$), and let $X_b \coloneqq X \setminus X_s$.
The next lemma shows that, when $P(f)$ is close to $D^2-9D$, this low-degree part is small both in total degree into $f$ and in weight.

	  \begin{lemma} \label{lem: small Xs}
    For every $\delta \ge 0$, if $P(f) \leq (1 + \delta)D^2 - 9D$, then
    \begin{equation*}
      \sum_{v \in f}\sum_{x \in X_s}\deg_{\cF}(v, x) \leq \frac{8}{5}\delta D,
    \end{equation*}
    and
    \begin{equation*}
      w(X_s) \leq \frac{2}{75}\delta D^2.
    \end{equation*}
  \end{lemma}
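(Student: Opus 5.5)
The plan is to squeeze $w(X)$ from both sides, using the ``$qC^2+r^2$'' form of Lemma~\ref{lem: tableops redo}, not just its crude consequence $\tfrac{k-1}{2k}TC$. For $x\in X$ write $c_x:=\sum_{v\in f}\deg_\cF(v,x)$. By \ref{case s2 new} we have $c_x\le D$, and by \ref{case s3 new} we have $\sum_{x\in X}c_x=6(D-1)$. Let $s:=\sum_{x\in X_s}c_x\le D/2$.

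\emph{Lower bound.} Combining the hypothesis $P(f)\le(1+\delta)D^2-9D$ with \eqref{eq:3-uniform-3-simple-pairs-bound} and $Y\ge 0$ gives
\[
w(X)\ \ge\ w(X)-Y\ =\ 3D^2-6D+3-P(f)\ \ge\ (2-\delta)D^2+3D .
\]
The point is that the $-9D$ in the hypothesis leaves a positive linear term. This term absorbs the $O(D)$ losses coming from $T=6(D-1)$ rather than $6D$.

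\emph{Upper bound.} Split $w(X)=w(X_b)+w(X_s)$ and use Fact~\ref{lem: ops1} columnwise, so that $w(x)\le c_x^2/3$.
\begin{itemize}
\item On $X_b$ the column sums are at most $C=D$ and the total is $T=6(D-1)-s$. For $s<D$ this gives $q=5$ and $r=D-6-s$, and the first inequality of Lemma~\ref{lem: tableops redo} yields
\[
w(X_b)\le \tfrac13\bigl(5D^2+(D-6-s)^2\bigr)=2D^2-\tfrac23 D(s+6)+\tfrac13(s+6)^2 .
\]
\item On $X_s$ every $c_x$ is at most $s$, so $w(X_s)\le\tfrac13\sum_{x\in X_s}c_x^2\le \tfrac13 s^2$.
\end{itemize}
Comparing the two bounds, the $D^2$ terms cancel and the linear terms are favourable. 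This leaves an inequality of the shape
\[
\tfrac23 sD-\tfrac23 s^2\ \le\ \delta D^2+O(s)+O(1),
\]
that is, $s(1-s/D)\le \tfrac32\delta D+O(1)$.

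Next comes a short case analysis. If $\delta\ge 5/16$, then $\tfrac85\delta D\ge D/2\ge s$ and the first claim is trivial. Otherwise, the quadratic inequality forces $s$ onto its small branch. On that branch $1-s/D\ge 15/16$, and this gives $s\le \tfrac32\cdot\tfrac{16}{15}\delta D=\tfrac85\delta D$. Here I would first use the crude bound $s^2\le sD/2$, which gives $s\le 3\delta D$, to rule out the large branch, and then bootstrap once.

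For the weight bound, I will use $w(X_s)\le \tfrac13 s\cdot\max_{x\in X_s}c_x$. It then suffices to show $\max_{x\in X_s}c_x\le D/20$, so that $w(X_s)\le\tfrac13\cdot\tfrac85\delta D\cdot \tfrac{D}{20}=\tfrac{2}{75}\delta D^2$.

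I expect this last step to be the main obstacle: controlling the largest column inside $X_s$. I would try two things, in this order.
\begin{itemize}
\item Reuse the comparison above but keep $\sum_{X_s}c_x^2$ instead of bounding it by $s^2$. A single large column $c^*$ in $X_s$ costs roughly $\tfrac13 c^*(D-c^*)$ against the extremal value, and one would hope to bound this by $\delta D^2$ times a small constant.
\item Failing that, use maximality of $X_s$. The next vertex $x'\in X_b$ satisfies $c_{x'}\ge c^*$ and $s+c_{x'}>D/2$, so $X_b$ contains a column of size below $D$ near $D/2$. Its deficit $D-c_{x'}$ also costs $\Omega(D\cdot\min(c_{x'},D-c_{x'}))$ in $\sum c_x^2$, via the ``moreover'' part of Lemma~\ref{lem: tableops redo} or a direct Karamata argument. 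This would force $\delta$ to be bounded below unless $c^*$ is tiny.
\end{itemize}
If neither yields the constant $1/20$ cleanly, I would fall back on $w(X_s)\le\tfrac13 s^2\le \tfrac13(\tfrac85)^2\delta^2D^2$. I would then combine this with the a priori bound $\delta\le 0.031$ from Lemma~\ref{lem: delta 1.031}, which is the regime in which the lemma is applied.
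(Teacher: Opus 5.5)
Your squeeze of $w(X)$ is the paper's argument, and those steps are correct. You combine the lower bound $w(X)\ge(2-\delta)D^2+3D$, the bound $w(X_b)\le\frac13\bigl(5D^2+(D-6-s)^2\bigr)$, and $w(X_s)\le\frac13 s^2$. This gives $\sigma(1-\sigma)\le\frac32\delta$ with $\sigma:=s/D\le\frac12$.

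The gap is in extracting $\sigma\le\frac85\delta$ from this inequality. The quadratic only gives $\sigma\le\frac12\bigl(1-\sqrt{1-6\delta}\bigr)$, and that is at most $\frac85\delta$ exactly when $\delta\le\frac{5}{128}$. Your single bootstrap from $\sigma\le 3\delta$ is weaker still: it yields $1-\sigma\ge\frac{15}{16}$ only for $\delta\le\frac1{48}$. For $\delta\in(\frac{5}{128},\frac{5}{16})$ the step fails outright, not just in the bookkeeping. Take $\delta=\frac1{10}$, with five columns of sum $D$ and one of sum $0.82D-6$ in $X_b$, one column of sum $0.18D$ in $X_s$, and every column balanced (up to rounding). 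This satisfies every constraint you use: $c_x\le D$, the total $6(D-1)$, the definition of $X_s$, and $w(X)\ge(2-\delta)D^2+3D$. Yet $s=0.18D>\frac85\delta D$. So the first claim cannot be proved without using the maximality of $b(f)$.

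The missing idea is the reduction the paper makes at the outset. Both conclusions are monotone in $\delta$, so it suffices to treat the exact $\delta_0$ defined by $P(f)=(1+\delta_0)D^2-9D$. Lemmas~\ref{lem: 33pairs} and~\ref{lem: delta 1.031} give $0\le\delta_0\le 0.031$, and Lemma~\ref{lem: delta 1.031} is where maximality enters. The small root is then at most $0.049$, so $\frac23\sigma(1-0.049)\le\delta_0$, which gives $\sigma\le\frac85\delta_0$ directly, with no bootstrapping.

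With this in hand, what you call the main obstacle disappears. Every $c_x$ with $x\in X_s$ is at most $s\le 0.0496D<D/20$. Your primary route and your fallback therefore both reduce to the paper's computation $w(X_s)\le\frac13 s^2\le\frac13\cdot\frac85\delta_0\cdot 0.049\,D^2\le\frac{2}{75}\delta_0D^2$. You should invoke Lemma~\ref{lem: delta 1.031} through this reduction for both claims. Using it only as a fallback for the second claim, and only ``in the regime in which the lemma is applied'', does not prove the statement as written, which is for every $\delta\ge 0$.
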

  \begin{proof}
    Since the conclusions become weaker as $\delta$ increases, we may assume that $P(f) = (1+\delta)D^2 - 9D$.
    Let $\eps = \sum_{v \in f}\sum_{x \in X_s}\deg(v, x) / D$. By definition, $\eps \leq 1/2$. First, we will show $\eps \leq 8\delta/5$.

    By Lemma \ref{lem: tableops redo} with $k = 3$, $n = |X_b|$, $T = (6 - \eps)D$, and $C = D$, we have
    \begin{equation*}
      w(X_b) \leq \frac{1}{3}\left(5D^2 + (1 - \eps)^2D^2\right),
    \end{equation*}
    and by Lemma \ref{lem: tableops redo} with $\eps D$ playing the role of both $C$ and $T$, we have
    \begin{equation}\label{eq:3uniform-3simple-smallX-weight}
      w(X_s) \leq \frac{1}{3}\left(\eps D\right)^2.
    \end{equation}
    Therefore,
    \begin{equation*}
      w(X)  = w(X_b) + w(X_s) \leq \left(2 - \frac{2}{3}\eps + \frac{2}{3}\eps^2\right)D^2.
    \end{equation*}
    By \eqref{eq:3-uniform-3-simple-pairs-bound},
    \begin{equation*}
      w(X) \geq 3D^2 - 6D + 3 - P(f) \geq (2 - \delta)D^2.
    \end{equation*}
    Combining the previous two inequalities, we have
    \begin{equation}\label{eq:delta1-quadratic-bound}
      \frac{2}{3}\eps - \frac{2}{3}\eps^2 \leq \delta,
    \end{equation}
    so either
    \begin{equation*}
      \eps \leq \frac{1}{2} - \frac{1}{2}\sqrt{1 - 6\delta} \qquad \text{or} \qquad \eps \geq \frac{1}{2} + \frac{1}{2}\sqrt{1 - 6\delta}.
    \end{equation*}
    Since $\eps \leq 1/2$ and $\delta \leq .031$ by Lemma \ref{lem: delta 1.031}, we have $\eps \leq 1/2 - (1/2)\sqrt{1 - 6\delta} \leq .049$. Therefore, again by \eqref{eq:delta1-quadratic-bound}, we have $(2\eps/3)(1 - .049) \leq \delta$, and thus $\eps \leq 8\delta/5$, as desired. Finally, by \eqref{eq:3uniform-3simple-smallX-weight}, since $\eps^2 \leq (8\delta/5)(.049) \leq 2\delta/25$, we have $w(X_s) \leq 2\delta D^2/75$, as desired.
	  \end{proof}

The next lemma recovers one of the visible features of the extremal construction in Figure~\ref{fig: 3,3 optimum edge}: after discarding the low-degree part $X_s$, exactly six vertices outside $f$ remain.

	    \begin{lemma}\label{lem: Xb=6}
	    $|X_b|=6$.
	    \end{lemma}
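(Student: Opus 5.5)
The plan is to get both inequalities $|X_b|\ge 6$ and $|X_b|\le 6$ from degree counting, using Lemma~\ref{lem: delta 1.031} and Lemma~\ref{lem: small Xs} with $\delta = 0.031$. For $x\in X$ write $c_x := \sum_{v\in f}\deg_{\cF}(v,x)$.

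Three facts will be used throughout.
\begin{enumerate}
\item By Lemma~\ref{lem: tableop new}\ref{case s2 new}, $c_x\le D$ for every $x\in X$.
\item By \ref{case s3 new}, $\sum_{x\in X}c_x = 6(D-1)$.
\item By Lemma~\ref{lem: small Xs} with $\delta=0.031$, $\sum_{x\in X_s}c_x\le \frac85(0.031)D < 0.05D$ and $w(X_s)\le \frac{2}{75}(0.031)D^2$.
\end{enumerate}

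\emph{Lower bound.} Facts 2 and 3 give $\sum_{x\in X_b}c_x \ge 6(D-1)-0.05D > 5D$ for large $D$. Since each $c_x\le D$ by Fact 1, at least six terms are needed, so $|X_b|\ge 6$.

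\emph{Upper bound.} Suppose instead that $|X_b|\ge 7$. Let $x'$ be a vertex of $X_b$ with the smallest value of $c_{x'}$. By the maximality in the definition of $X_s$, adding $x'$ to $X_s$ would push the total above $D/2$. Hence $c_{x'} > D/2 - 0.05D = 0.45D$, and therefore every $x\in X_b$ has $c_x > 0.45D$.

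Next we bound $w(X_b)$. By Fact~\ref{lem: ops1} with $k=3$, $w(x)\le \frac13 c_x^2$. So it suffices to maximize $\sum_{x\in X_b} c_x^2$ subject to $0.45D\le c_x\le D$, $\sum_{x\in X_b}c_x\le 6D$, and $|X_b|\ge 7$. This is a convex maximization over a polytope, so the maximum is at an extreme point: all but one of the variables sit at a bound. The candidate maximizer has five values equal to $D$, one equal to $0.55D$, and one equal to $0.45D$; additional vertices only use up more of the budget at the cost $0.45D$ each and decrease the sum. This gives $\sum c_x^2\le (5+0.3025+0.2025)D^2 = 5.505D^2$, hence $w(X_b)\le 1.835D^2$. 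To make this rigorous I would argue by a Karamata/majorization step in the style of Lemma~\ref{lem: tableops redo}, after subtracting the forced minimum $0.45D$ from each variable. Alternatively, Fact~\ref{lem: sums} applied with $[a,b]=[0.45D,D]$ and $n=|X_b|\ge 7$ gives $\sum c_x^2\le 1.45D\cdot 6D - 7(0.45)D^2 = 5.55D^2$, which is also enough.

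\emph{Contradiction.} Combining with Fact 3, $w(X)\le (1.85+0.001)D^2 < 1.86D^2$. By \eqref{eq:3-uniform-3-simple-pairs-bound} and $Y\ge 0$,
\[
P(f)\ge 3D^2-6D-1.86D^2 > 1.1D^2,
\]
which contradicts Lemma~\ref{lem: delta 1.031}. So $|X_b|\le 6$, and together with the lower bound, $|X_b|=6$.

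The main obstacle is getting the lower bound $c_x > 0.45D$ for every $x\in X_b$ from the definition of $X_s$ as a maximal low-degree set. This needs care with ties, and with the degenerate case where $X_s$ is all of $X$ or empty. The remaining numerical slack is comfortable.
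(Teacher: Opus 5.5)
Your argument is correct, and its skeleton is the paper's. The lower bound comes from $\sum_{x\in X_b}c_x>5D$ with each $c_x\le D$. The upper bound is by contradiction via the minimizer $x'\in X_b$, for which $c_{x'}>0.45D$ follows from the maximality of $X_s$ and Lemma~\ref{lem: small Xs}.

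The difference is in the final convexity estimate. The paper uses the lower bound only for $x'$ itself. It applies Lemma~\ref{lem: tableops redo} to $X\setminus\{x'\}$ with $C=D$ and $T=6(D-1)-m$, and adds $w(x')\le m^2/3$, to get $w(X)\le 2D^2-\frac23 m(D-m)$. It then needs the averaging bound $m\le 6D/7$ (from $|X_b|\ge 7$) to reach $w(X)\le 1.96D^2<(2-\delta)D^2$, a margin of only about $0.01D^2$. You instead note that the minimality of $x'$ places every $c_x$ with $x\in X_b$ in $[0.45D,D]$, and you apply Fact~\ref{lem: sums} with $n\ge 7$. This bypasses Karamata and the bound on $m$, and gives $w(X)<1.86D^2$, a margin of about $0.11D^2$.

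Concluding via $P(f)>1.1D^2$ against Lemma~\ref{lem: delta 1.031} is equivalent to the paper's use of $w(X)\ge(2-\delta)D^2$, since both come from \eqref{eq:3-uniform-3-simple-pairs-bound}. Given the Fact~\ref{lem: sums} bound, your extreme-point discussion is unnecessary. The ``obstacle'' you flag is also not one. The case $X_s=X$ is ruled out by $|X_b|\ge 7$. Even with ties, $X_s\cup\{x'\}$ keeps the defining ordering property, so maximality forces its sum above $D/2$, which is exactly how the paper uses it.
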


    \begin{proof}
        If $0 \le \delta\le 0.031$ is defined such that $P(f) = (1+\delta)D^2-9D$, then $w(X) \ge (2-\delta)D^2$ by~\eqref{eq:3-uniform-3-simple-pairs-bound}.
    
        Since $\sum_{v\in f}\sum_{x \in X_s} \deg_{\cF}(v,x) \le D/2$ by definition and $\sum_{v\in f}\sum_{x \in X} \deg_{\cF}(v,x) = 6(D-1)$ by \ref{case s3 new}, we have $\sum_{v\in f}\sum_{x \in X_b} \deg_{\cF}(v,x)  > 5D$.        
        This implies that $|X_b| \ge 6$.

        In order to show that $|X_b|\le 6$ as well, let us proceed by contradiction. Let $|X_b|\ge 7$. Let $x'\in X_b$ be the vertex minimizing the quantity $m:= \sum_{v\in f} \deg_{\cF}(v,x')$. Since $\sum_{v\in f}\sum_{x \in X_s} \deg_{\cF}(v,x) \leq \frac85 \delta D \le 0.0496 D$ by Lemma \ref{lem: small Xs}, we have $m \ge (D/2) - (0.0496D) > 0.45 D$ by the maximality of $X_s$. Also, since $|X_b| \ge 7$, we have $m\le (6(D-1))/|X_b| \le \frac67 D$ by our choice of $x'$.

        Apply Lemma \ref{lem: tableops redo} with $k=3$, $n=|X|-1$, $C=D$, and $T=6(D-1)-m$ to get $w(X\setminus \{x'\}) \le \frac13 (5D^2 + (D-m)^2) = 2D^2 - \frac23 mD + \frac{m^2}{3}$. Additionally, by Lemma \ref{lem: tableops redo} with $k=3$, $n=1$, $C=T=m$, we have $w(x') \le \frac13 m^2$. 
        
        Since $0.45 D \le m \le \frac67 D$, this implies $w(X) =w(X\setminus \{x'\})+w(x') \le 2D^2 - \frac23 m(D-m) 
        \le 2D^2 - \frac23 (0.45D)(\frac17 D) 
        \le (2-0.04)D^2$, a contradiction.
    \end{proof}

	    Partition the vertex set of $\overline{G}$ into $V_1\cup V_2$ as follows. Let $V_1$ be the set of edges $g\in E(\cF)$ such that $g\cap X_s\ne \emptyset$, and let $V_2 = V(\overline G)\setminus V_1$. Since every $g\in N_{L(\cF)}(f)$ intersects $f$ in one vertex, we have $|g\cap X_b|=2$ for every $g\in V_2$.
With $|X_b|=6$ in hand, near-extremality of $w(X_b)$ forces the following degree sum to be close to its balanced value for every edge in $V_2$.

	  \begin{lemma}\label{lem: pm epsilon 83deltaprime}
    For every $\delta \ge 0$, if $w(X_b) \ge (2 - \delta)D^2$, then every $g \in V_2$ satisfies
    \begin{equation*}
      \sum_{x \in g\setminus f} \sum_{v\in f\setminus g} \deg_{\cF}(v, x) = \left(\frac{4}{3} \pm \sqrt{\frac{8}{3}\delta}\right)D.
    \end{equation*}
  \end{lemma}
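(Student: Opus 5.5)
The plan is to read this off from the ``moreover'' part of Lemma~\ref{lem: tableops redo}, applied to the codegree matrix between $f$ and $X_b$. Write $f=\{v_1,v_2,v_3\}$. By Lemma~\ref{lem: Xb=6}, $X_b=\{x_1,\dots,x_6\}$. Set $a_{i,j}:=\deg_{\cF}(v_i,x_j)$ for $i\in[3]$ and $j\in[6]$, so that $w(X_b)=\sum_{j=1}^{6}\sum_{1\le i<i'\le 3}a_{i,j}a_{i',j}$. We check the hypotheses of Lemma~\ref{lem: tableops redo} with $k=3$, $n=6$, $C=D$ and $T=6(D-1)$. Each column sum satisfies $\sum_i a_{i,j}\le D$ by \ref{case s2 new}, which applies because $f$ meets $3D-3$ edges by Lemma~\ref{lem: structure1}. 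The total is at most $6(D-1)$ by \ref{case s3 new}.

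Now fix $g\in V_2$. Since every edge meeting $f$ meets it in exactly one vertex (\ref{case s1 new}), we can write $g=\{v_{i^*},x_{j_1},x_{j_2}\}$ with $x_{j_1},x_{j_2}\in X_b$. These two vertices are distinct because $\cF$ is $3$-uniform. The quantity in the statement is then
\[
\sum_{i\in[3]\setminus\{i^*\}}\bigl(a_{i,j_1}+a_{i,j_2}\bigr),
\]
and the balanced value $2(1-\tfrac1k)C$ appearing in Lemma~\ref{lem: tableops redo} equals $\tfrac43 D$ when $k=3$ and $C=D$. So the statement asks exactly that the deviation of this sum from $\tfrac43 D$ be at most $\sqrt{8\delta/3}\,D$.

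Argue by contradiction. Suppose the deviation is some $\delta'>\sqrt{8\delta/3}\,D$. The ``moreover'' clause of Lemma~\ref{lem: tableops redo} then gives
\[
w(X_b)\le \frac{k-1}{2k}nC^2-\frac{k\delta'^2}{4(k-1)}=\frac13\cdot 6D^2-\frac38\delta'^2<2D^2-\frac38\cdot\frac{8\delta}{3}D^2=(2-\delta)D^2 .
\]
This contradicts the hypothesis $w(X_b)\ge(2-\delta)D^2$. The strict inequality is what lets the argument also cover $\delta=0$.

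The only real care needed is bookkeeping. First, the ``moreover'' bound has main term $\tfrac{k-1}{2k}nC^2$, so $n$ must be exactly $|X_b|=6$ to produce $2D^2$; this is precisely why Lemma~\ref{lem: Xb=6} is needed. Second, the ``moreover'' part uses only the column-sum bound $C$ (its proof bounds the other $n-2$ columns via Fact~\ref{lem: ops1}), so the total-sum hypothesis plays no role there. No other obstacle is expected.
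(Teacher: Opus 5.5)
Your proposal is correct and matches the paper's proof. Both argue by contradiction using the ``moreover'' part of Lemma~\ref{lem: tableops redo} with $k=3$, $C=D$, $n=|X_b|=6$ (from Lemma~\ref{lem: Xb=6}) and a deviation exceeding $\sqrt{8\delta/3}\,D$, which yields $w(X_b)<(2-\delta)D^2$. Your version simply spells out the hypothesis checks and the identification of the sum with $\sum_{i\neq i^*}(a_{i,j_1}+a_{i,j_2})$ in more detail.
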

  \begin{proof}
     By way of contradiction, suppose not. 
     That is, suppose
     \[
       \left|\sum_{x \in g\setminus f} \sum_{v\in f\setminus g}
       \deg_{\cF}(v, x) - \frac43 D\right|
       > \sqrt{8\delta/3}D
     \]
     for some $g\in V_2$. Since $|X_b|=6$ by Lemma~\ref{lem: Xb=6}, the ``moreover" part of Lemma~\ref{lem: tableops redo}, with $3$, $D$, $6$, and $\sqrt{8\delta/3}D$ playing the role of $k$, $C$, $n$, and $\delta$, respectively, implies that
     \[
       w(X_b) < \frac13 6D^2 - \frac{3(8\delta/3)D^2}{4(2)}
       = (2-\delta)D^2,
     \]
     a contradiction.
  \end{proof}

	  Let $T_1$ denote the number of triangles in $\overline{G}$ containing at least one vertex from $V_1$. Let $T_2$ denote the number of remaining triangles in $\overline{G}$.
The term $T_1$ will be handled directly using the smallness of $X_s$, so the main remaining task is to bound $T_2$ using the degree control from Lemma~\ref{lem: pm epsilon 83deltaprime}.
	  
	  \begin{lemma} \label{lem: t2 count}
    For every $\eps > 0$, if $\sum_{x \in g\setminus f} \sum_{v\in f\setminus g} \deg_{\cF}(v, x) = (4/3 \pm \eps)D$ for every $g \in V_2$, then
    \begin{equation*}
        T_2 \le \frac{1}{12} \left( \frac{\frac83 (P(f) + w(X_s)) + (4+2\eps)Y}{D^2} - \frac43 + 3\eps^2 \right)D^3 + \frac43 D^2. 
    \end{equation*}
  \end{lemma}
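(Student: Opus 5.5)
The plan is to view $T_2$ as the number of triangles in the tripartite graph $\overline G[V_2]$ and bound it with Lemma~\ref{lem: trianglecount2}. The bound $\sum_g \deg^2$ will then be controlled by Fact~\ref{lem: sums}, using the two-sided degree information supplied by the hypothesis. Write $f=\{v_1,v_2,v_3\}$ and let $V_2^{(i)}$ be the set of $g\in V_2$ with $v_i\in g$. Any two edges in the same $V_2^{(i)}$ share $v_i$, so they are adjacent in $G$ and non-adjacent in $\overline G$. Hence $\overline G[V_2]$ is tripartite with parts $V_2^{(1)},V_2^{(2)},V_2^{(3)}$, and Lemma~\ref{lem: trianglecount2} gives
\[
T_2\le \frac1{12}\sum_{g\in V_2} d(g)^2, \qquad\text{where } d(g):=\deg_{\overline G[V_2]}(g).
\]

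The next step is to pin down $d(g)$ for $g\in V_2$. Say $g=\{v,x,x'\}$ with $v\in f$ and $x,x'\in X_b$. By Lemma~\ref{lem: structure1} and \ref{case s1 new}, the two vertices of $f\setminus g$ lie on exactly $2(D-1)$ edges other than $f$. Such an edge meets $g$ exactly when it contains $x$ or $x'$. By inclusion--exclusion, the number of these edges meeting $g$ is
\[
s_g-y_g, \qquad s_g:=\sum_{x\in g\setminus f}\sum_{u\in f\setminus g}\deg_{\cF}(u,x),
\]
where $y_g$ counts the edges through $f\setminus g$ containing both $x$ and $x'$. These $y_g$ are exactly the overcounts recorded in $Y$, and summing gives $\sum_g y_g\le 2Y$. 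Restricting to $V_2$ only removes neighbours. Using the hypothesis $s_g=(4/3\pm\eps)D$, I therefore get
\[
d(g)\le \left(\tfrac23+\eps\right)D+y_g, \qquad d(g)\ge \left(\tfrac23-\eps\right)D-2-(\text{neighbours lost to } V_1).
\]

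Then I apply Fact~\ref{lem: sums} with $a=(2/3-\eps)D$ and $b=(2/3+\eps)D$ to the quantities $d(g)-y_g$, or to suitable truncations of them. The term $(a+b)\sum d(g)$ becomes $\frac43 D\sum_g d(g)$. Since $\sum_g d(g)=2|E(\overline G[V_2])|\le 2P(f)$, this yields the main term $\frac83P(f)D$. The term $-|V_2|ab$ equals $-|V_2|(\frac49-\eps^2)D^2$. With $|V_2|$ close to $3D$, this produces the $-\frac43D^3+3\eps^2D^3$ part of the bound. The $y_g$ corrections enter through cross terms of the form $2y_g\cdot b+\dots$ and through the shift of the interval, and summing with $\sum_g y_g\le 2Y$ gives the $(4+2\eps)YD$ contribution. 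The $-O(D)$ errors from the ``$-1$'' terms are absorbed into $\frac43D^2$.

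The main obstacle is the bookkeeping for $V_1$, which has two effects. First, $|V_2|=3(D-1)-|V_1|$ is smaller than $3D$, which weakens the $-|V_2|ab$ term. Second, edges of $V_1$ lower some $d(g)$ below $a$, so Fact~\ref{lem: sums} does not apply directly. I would handle both by replacing each $d(g)$ with $d(g)$ plus the number of its $\overline G$-neighbours in $V_1$, which restores the lower bound $a$. I would also formally pad $V_2$ up to $3D$ terms, or equivalently extend the sums over all of $N_{L(\cF)}(f)$. The resulting extra mass in $\sum d$ is bounded by the number of non-adjacent pairs involving $V_1$. I expect this to be comparable to $w(X_s)$ together with $|V_1|D$, and to be controllable by Lemma~\ref{lem: small Xs}. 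This is what should produce the $\frac83 w(X_s)D$ term. Checking that this compensation exactly matches the stated coefficient is the step I expect to be delicate.
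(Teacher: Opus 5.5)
Your skeleton is the paper's: Lemma~\ref{lem: trianglecount2} on the tripartite graph $\overline G[V_2]$, the formula $\deg_{\overline G}(g)=2D-2-s_g+y_g$ for $g\in V_2$, Fact~\ref{lem: sums} on $[a,b]=[(\frac23-\eps)D,(\frac23+\eps)D]$, and $\sum_{g\in V_2}y_g\le 2Y$. The difference is in the linear term. You bound $\sum_{g\in V_2}\deg_{\overline G}(g)$ by $2P(f)$ directly. The paper instead double counts $\sum_g s_g=2w(X)$ to get $\sum_{g\in V_2}(2D-s_g)\le 2D|V_2|-2w(X_b)+D|V_1|$, and then substitutes $w(X_b)=3D^2-6D+3-P(f)-w(X_s)+Y$ from \eqref{eq:3-uniform-3-simple-pairs-bound}. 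In the paper's version $V_1$ is harmless: $|V_2|$ ends up with coefficient $(\frac{20}{9}+\eps^2)D^2$ and $|V_1|$ with $\frac43D^2$, so $|V_1|+|V_2|\le 3D$ finishes the computation.

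In your version, the $V_1$ bookkeeping you postpone is a genuine gap. Your bound $2P(f)$ does not depend on $|V_2|$, so the term $-|V_2|ab$ falls short of the required $-3Dab$ by $(|V_1|+3)ab\approx\frac49|V_1|D^2$. Since $|V_1|$ may be of order $D$, this error is not covered by the additive $\frac43D^2$. It is not covered by the $w(X_s)$ term either: if every vertex of $X_s$ has positive codegree with only one vertex of $f$, then $w(X_s)=0$.

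Lemma~\ref{lem: small Xs} cannot repair this. It needs a hypothesis on $P(f)$ that this lemma does not assume, and it only shrinks the error to a multiple of $\delta D^3$ rather than removing it. Your instinct that the deficit is paid for by non-adjacent pairs involving $V_1$ is right, but the quantitative inequality is exactly what is missing. Here is how to supply it:
\begin{enumerate}
\item Write $\sum_{g\in V_2}\deg_{\overline G}(g)=2P(f)-\sum_{g\in V_1}\deg_{\overline G}(g)$.
\item For $g\in V_1$, use $\deg_{\overline G}(g)\ge 2(D-1)-s_g$.
\item In $\sum_{g\in V_1}s_g$, the contributions of vertices of $X_s$ sum to exactly $2w(X_s)$. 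Each $g\in V_1$ has at most one vertex in $X_b$, which contributes at most $D$ by \ref{case s2 new}.
\item Hence $\sum_{g\in V_1}\deg_{\overline G}(g)\ge (D-2)|V_1|-2w(X_s)$.
\end{enumerate}
Multiplying by $a+b=\frac43D$ and using $\frac43D(D-2)\ge\frac49D^2\ge ab$, this pays for the deficit and leaves precisely the $\frac83Dw(X_s)$ term.

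There is also a cruder route. Each $g\in V_1$ contains a vertex whose total degree into $f$ is at most $D/2$, so $\deg_{\overline G}(g)\ge D/2-2$. This already beats $ab/(a+b)\le D/3$ and gives the bound even without $w(X_s)$. With either step added, your route is complete and slightly more direct than the paper's.

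One caution on the $Y$ terms: keep the $-\frac43D\sum_g y_g$ coming from $\sum_g(2D-s_g)=\sum_g\deg_{\overline G}(g)+2|V_2|-\sum_g y_g$. Without it, the coefficient of $DY$ exceeds $4+2\eps$.
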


  \begin{proof}
    By Lemma \ref{lem: trianglecount2},
    \begin{equation}\label{eq:T2-cherry-count}
      T_2 \leq \frac{1}{12}\sum_{g \in V_2} \deg_{\overline G}(g)^2.
    \end{equation}
    For each $g\in V_2$, letting
    \begin{equation*}
      d(g):=2D- \sum_{x\in g\setminus f} \sum_{v\in f\setminus g}\deg_{\cF}(v,x)
    \end{equation*}
    and
    \begin{equation*}
      y(g):= \left|\left\{g'\in V(\overline{G}): g'\cap f\neq g \cap f
\text{ and } |g\cap g'\cap X_b|=2\right\}\right|,
    \end{equation*}
    we have $\deg_{\overline G} (g) \le d(g) + y(g)$, so
    \begin{equation}\label{eq:T2-sum-squares}
      \sum_{g \in V_2} \deg_{\overline G}(g)^2 \leq \sum_{g \in V_2}d(g)^2 + \sum_{g\in V_2}d(g)y(g) + \sum_{g \in V_2}y(g)\deg_{{\overline G}}(g).
    \end{equation}

    Since $\deg_{{\overline G}}(g) \leq 2D$ for every $g \in V_2$ and $\sum_{g\in V_2}y(g) \leq 2Y$, we have
    \begin{equation}\label{eq:T2-sum-y-deg}
      \sum_{g \in V_2}y(g)\deg_{{\overline G}}(g) \leq 2D\sum_{g \in V_2}y(g) \leq 4DY.
    \end{equation}
    Similarly, since $d(g) \leq (2/3 + \eps)D$ for every $g \in V_2$, we have
    \begin{equation}\label{eq:T2-sum-dy}
      \sum_{g\in V_2}d(g)y(g) \leq \left(\frac{2}{3} + \eps\right)D\sum_{g\in V_2}y(g) \leq \left(\frac{4}{3} + 2\eps\right)DY.
    \end{equation}
    By Fact \ref{lem: sums} with $(2/3 - \eps)D$ and $(2/3 + \eps)D$ playing the roles of $a$ and $b$, respectively, we have
    \begin{equation}\label{eq:T2-sum-d-squared}
      \sum_{g \in V_2}d(g)^2 \leq \frac{4D}{3}\left(\sum_{g\in V_2}d(g)\right) - |V_2|\left(\frac{4}{9} - \eps^2\right)D^2,
    \end{equation}
    and 
    \begin{equation}\label{eq:T2-sum-d}
      \sum_{g \in V_2} d(g) = 2D|V_2| - \sum_{g\in V_2}\sum_{x \in g\setminus f} \sum_{v\in f\setminus g} \deg_{\cF}(v, x)
       \leq 2D|V_2| - 2w(X_b) + D|V_1|.
    \end{equation}
    Combining \eqref{eq:T2-sum-squares}, \eqref{eq:T2-sum-y-deg}, \eqref{eq:T2-sum-dy}, \eqref{eq:T2-sum-d-squared}, and \eqref{eq:T2-sum-d}, we have
    \begin{equation*}
      \sum_{g\in V_2} \deg_{\overline G}(g)^2 \leq \left(\frac{20}{9} + \eps^2\right)|V_2|D^2 - \frac{8D}{3}w(X_b) + \frac{4}{3}|V_1|D^2 + \left(\frac{16}{3} + 2\eps\right)DY.
    \end{equation*}
    Note that $|V_1| + |V_2| \leq 3D$ and ${4}/{3} < {20}/{9}+\eps^2$, and by \eqref{eq:3-uniform-3-simple-pairs-bound}, we have $w(X_b) = 3D^2 - 6D + 3 - P(f) - w(X_s) + Y$, so by the previous inequality,
    \begin{align*}
      \sum_{g\in V_2} \deg_{\overline G}(g)^2 &\leq \left(\frac{8}{3}\cdot\frac{P(f)}{D^2} -\frac{4}{3} + 3\eps^2\right)D^3 + \frac{8D}{3}(w(X_s) + 6D) + \left(4 + 2\eps\right)DY\\
      &\leq \left(\frac{\frac{8}{3}(P(f) + w(X_s)) + (4 + 2\eps)Y}{D^2} - \frac{4}{3} + 3\eps^2\right)D^3 + 16D^2\\,
    \end{align*}
    and the result follows by \eqref{eq:T2-cherry-count}.     
	  \end{proof} 

We now finish by writing $P(f)=(1+\delta)D^2-9D$, using the preceding lemmas to bound the two triangle contributions $T_1$ and $T_2$, and substituting the resulting estimates into the definition of $b$.
	    
	  \begin{proof}[Proof of Lemma \ref{lem: 3,3 a parameter}]
    Let $\delta \in \mathbb R$ such that $P(f) = (1 + \delta)D^2 - 9D$.
    By Lemma \ref{lem: 33pairs}, $\delta \geq Y/D^2 \geq 0$, and by Lemma \ref{lem: delta 1.031}, $\delta \leq .031$. By Lemma \ref{lem: small Xs}, $\sum_{v\in f}\sum_{x \in X_s}\deg_{\cF}(v,x) \leq 8\delta D/5$ and $w(X_s) \leq 2\delta D^2/75$.

    The number of triangles in $\overline{G}$ is $T_1 + T_2$, where again $T_1$ denotes the number of triangles containing at least one vertex from $V_1$.

    Note that
    \begin{equation*}
      T_1 \leq |V_1|D^2 \leq D^2 \sum_{v\in f}\sum_{x \in X_s}\deg_{\cF}(v,x) \leq \frac{8}{5}\delta D^3.
    \end{equation*}

    Define $\delta':= \frac{77}{75}\delta$. By \eqref{eq:3-uniform-3-simple-pairs-bound}, $w(X_b)=w(X)-w(X_s) \ge (2-\delta)D^2 - \frac{2}{75}\delta D^2 = (2-\delta')D^2$.
    
    Let $\eps = \sqrt{8\delta'/3}$, and note that $\eps \leq \sqrt{8(77/75)(.031)/3} \leq 1/2$. 
    By Lemma \ref{lem: pm epsilon 83deltaprime} with $\delta'$ playing the role of $\delta$, we have $\sum_{x \in g\setminus f}\sum_{v\in f\setminus g}\deg_{\cF}(v,x) = (4/3 \pm \eps)D$ for every $g\in V_2$. Hence, by Lemma \ref{lem: t2 count},
    \begin{align*}
      T_2
      &\le \frac{1}{12} \left( \frac{\frac83 (P(f) + w(X_s)) + (4+2\eps)Y}{D^2} - \frac43 + 3\eps^2 \right)D^3 + \frac{4}{3}D^2\\
      &\leq \frac{1}{12} \left( \frac{\frac83 ((1+\delta)D^2-9D + \frac{2}{75}\delta D^2) + (4+2\eps)\delta D^2}{D^2} - \frac43 + 3\eps^2 \right)D^3 + \frac{4}{3}D^2\\
      &\leq \frac{1}{12} \left( \frac83 \left((1+\delta) + \frac{2}{75}\delta\right) + 5\delta - \frac43 + 8\left(\frac{77}{75}\delta\right) \right)D^3 - \frac{2}{3}D^2\\
      &\leq \left(\frac{1}{9} + 1.33\delta\right)D^3,
    \end{align*}
    where in the second inequality, we used that $P(f) = (1+\delta)D^2-9D$, $w(X_s)\leq \frac{2}{75}\delta D^2$, and $Y \leq \delta D^2$, and in the third inequality, we used $3\eps^2 = 8\delta' = 8(77 \delta/75)$ and $\eps \leq 1/2$.

    Altogether,
    \begin{align*}
        T_1 + T_2 &\leq \frac{8}{5}\delta D^3 + \left(\frac{1}{9} + 1.33\delta\right)D^3 = \left(\frac{1}{9} + 2.93\delta\right)D^3 \leq \left(\frac{1}{9} + 3\delta\right)D^3.
    \end{align*}

    Finally, we compute
    \begin{equation*}
      b_{L(\cF),3D}(f) \leq 1 - \frac{(1 + \delta)D^2 - 9D}{(3D)^2} + \frac{(1/9 + 3\delta)D^3}{(3D)^3} = 1 - \frac{1}{9} + \frac{1}{3^5}+o(1),
    \end{equation*}
    as desired.
  \end{proof}

\subsection{Proof of Lemma~\ref{lem: kk a parameter}}\label{subsection: kk param proof}

Now we turn our attention to $k$-uniform $k$-simple hypergraphs. This first lemma gives a lower bound on $P(f)$ for a hypergraph $\cF$ which maximizes the parameter $\theb$.

\begin{lemma}\label{lem: kunif pairs}
    Let $k\in \mathbb{Z}^+$ and $D$ be sufficiently large.  
    Suppose $\mathcal{F}$ is a $k$-uniform hypergraph with maximum degree at most $D$, and $f \in E(\mathcal{F})$ such that $$\theb_{L(\mathcal{F}), kD}(f) = \max_{\cH \in H(k,k,D)}\max_{e\in E(\cH)} 
    \theb_{L(\cH), kD}(e).$$
    Then $$P_{ L(\cF)}(f) \ge \left( \frac{k-1}{2}\right)D^2 - k^2D.$$
\end{lemma}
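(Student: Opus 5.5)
Since $(\cF,f)$ is a maximizing pair over $H(k,k,D)$, Lemma~\ref{lem: structure1} (applied with $t=k$) gives that $f$ intersects exactly $kD-k$ other edges and every vertex of $f$ has degree $D$. Let $G$ be the subgraph of $L(\cF)$ induced on $N_{L(\cF)}(f)$, so that $P(f)=\binom{kD-k}{2}-|E(G)|$. By Lemma~\ref{lem: tableop new S4},
\begin{equation*}
  |E(G)| = k\binom{D-1}{2} + w_{\cF,f}(X) - Y_{\cF}(f) \le k\binom{D-1}{2} + w_{\cF,f}(X),
\end{equation*}
because $Y_{\cF}(f)\ge 0$. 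It therefore suffices to bound $w(X)$ from above.

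To bound $w(X)$, apply Lemma~\ref{lem: tableops redo} with $n=|X|$ and $a_{i,j}=\deg_{\cF}(v_i,x_j)$. By \ref{case s2 new} of Lemma~\ref{lem: tableop new}, each column sum is at most $C=D$. By \ref{case s3 new}, the total is $T=k(k-1)(D-1)$. The lemma then gives
\begin{equation*}
  w(X)\le \frac{k-1}{2k}\,TC = \frac{(k-1)^2}{2}D(D-1).
\end{equation*}

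Now compute. We have
\begin{equation*}
  \binom{kD-k}{2}-k\binom{D-1}{2} = \frac{k(k-1)}{2}(D-1)^2.
\end{equation*}
Subtracting the bound on $w(X)$ yields
\begin{equation*}
  P(f)\ \ge\ \frac{k(k-1)}{2}(D-1)^2-\frac{(k-1)^2}{2}D(D-1) = \frac{k-1}{2}(D-1)\bigl(k(D-1)-(k-1)D\bigr) = \frac{k-1}{2}(D-1)(D-k).
\end{equation*}
Expanding, this equals
\begin{equation*}
  \frac{k-1}{2}\bigl(D^2-(k+1)D+k\bigr)\ \ge\ \frac{k-1}{2}D^2-\frac{k^2-1}{2}D,
\end{equation*}
which is at least $\frac{k-1}{2}D^2-k^2D$, as claimed.

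No step here is a real obstacle. The one point to check is that the maximizing hypothesis lets us invoke Lemma~\ref{lem: structure1}, and hence Lemmas~\ref{lem: tableop new} and~\ref{lem: tableop new S4}, which all require that $f$ meets exactly $kD-k$ edges. After that, the argument is the same Karamata-type weight bound already used in Lemma~\ref{lem: 33pairs}, followed by arithmetic.
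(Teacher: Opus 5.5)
Your proof is correct and follows the paper's argument. You use Lemma~\ref{lem: structure1} to get that $f$ meets $kD-k$ edges, then Lemma~\ref{lem: tableop new S4} with $Y_{\cF}(f)\ge 0$, then Lemma~\ref{lem: tableops redo} with $C=D$ to bound $w(X)$. The only difference is that you plug in the exact total $T=k(k-1)(D-1)$ where the paper uses the cruder $T=k(k-1)D$; this gives you the slightly sharper intermediate bound $\frac{k-1}{2}(D-1)(D-k)$ and the same conclusion.
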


\begin{proof}[Proof of Lemma \ref{lem: kunif pairs}]
    Label the vertices of $f$ as $v_1, v_2, \dots, v_k$, let $X:=X(f,\cF)$, and label the elements of $X$ as $x_1, x_2, \dots, x_n$. Let $G$ be the line graph of $\cF$ induced on $N_{L(\cF)}(f)$.

    From Lemma \ref{lem: structure1}, we know that $f$ intersects $kD-k$ edges. By Lemma \ref{lem: tableop new S4}, we can conclude that 
    \begin{align}\label{eq 3.19 E}
        |E(G)| \le k \binom{D-1}{2} + w(X).
    \end{align}

    By Lemma \ref{lem: tableop new}, we have $\sum_{i=1}^k \deg_{\cF}(v_i,x_j) \le D$ for all $j \in [n]$ and $\sum_{j=1}^n \deg_{\cF} (v_i,x_j) = (k-1)(D-1)$ for all $i\in [k]$. This implies that $\sum_{i=1}^k \sum _{j=1}^n \deg_{\cF} (v_i,x_j) = k(k-1)(D-1)$. 

    By Lemma \ref{lem: tableops redo}, with $k(k-1)D$ and $D$ playing the roles of $T$ and $C$, respectively, we have 
    \begin{align} \label{eq 3.19 w}
        w(X) \le \frac{k-1}{2k} (k(k-1)D)D= \frac{(k-1)^2}{2}D^2.
    \end{align}

    Using \eqref{eq 3.19 E} and \eqref{eq 3.19 w}, we have 
    \begin{multline*}
        P(f) = \binom{\deg_{L(\cF)}(f)}{2}- |E(G)| 
        \ge \binom{kD-k}{2} - k\binom{D-1}{2} - \frac{(k-1)^2}{2}D^2 \\
        =\frac{(k-1)}{2}D^2 - (k^2-k)D + \frac{k^2-k}{2}
        \ge \frac{(k-1)}{2}D^2 - (k^2)D, 
    \end{multline*}
    as desired. 
\end{proof}

Now we can prove our general bound on the $\theb$ parameter for any $k$-uniform hypergraph with large enough maximum degree.

\begin{proof}[Proof of Lemma \ref{lem: kk a parameter}]
Suppose $\cF \in H(k,k,D)$ and $f\in E(\cF)$ such that $$\theb_{L(\cF), kD} (f) = \max_{\cH \in H(k,k,D)}\max_{e\in E(\cH)} \theb_{L(\cH), kD}(e).$$
    
    By Lemma \ref{lem: structure1}, we have $\deg_{L(\cF)}(f) = kD-k$.
    By Lemma \ref{lem: kunif pairs}, we have $P(f) \ge (\frac{k-1}{2})D^2 - k^2D$. 
    Suppose that $P (f) = (\frac{k-1}{2}+\delta)D^2 - k^2D$ for some particular $\delta \ge 0$. Independent triples in $N_{L(\cF)}(f)$ are triangles in the complement of $N_{L(\cF)}(f)$, which is a $k$-partite graph with $P(f)$ edges. Thus, Lemma \ref{lem: trianglecount1} implies that $T(f) \le \binom{k}{3}(P(f) / \binom{k}{2})^{3/2}$, so
    \begin{align*}
        b_{L(\cF), kD} (f) &< \frac{kD-k}{kD} - \frac{P(f)}{(kD)^2} + \frac{\binom{k}{3}(P(f) / \binom{k}{2})^{3/2}}{(kD)^3}  \\
        & \le 1 - \frac{1}{k^2}\left(\frac{k-1}{2}+\delta\right) + \frac{\binom{k}{3}}{k^3 \binom{k}{2}^{3/2}}\left(\frac{k-1}{2}+\delta\right)^{3/2} \\
        & \le 1 - \frac{k-1}{2k^2} + \frac{(k-1)(k-2)}{6k^3 \sqrt{k}}.
    \end{align*}
    Therefore, for any $\cH \in H(k,k,D)$ and $e\in E(\cH)$, we must have  $\theb_{L(\cH), kD}(e) \le 1 - \frac{k-1}{2k^2} + \frac{(k-1)(k-2)}{6k^3 \sqrt{k}} + o(1)$, as desired.
\end{proof}

\subsection{Proofs of Theorems \ref{thm: 3,2 corr}, \ref{thm: 3,3 corr}, and \ref{thm: kk corr}}

With our lemmas upper bounding $b(e)$ in hand, we need only apply the main coloring result, Theorem \ref{thm:gct}, to each case.

\begin{proof}[Proof of Theorem \ref{thm: 3,2 corr}]
    Given $\iota>0$, let $\eps = \iota / 2$. Let $D_0$ be sufficiently large. 

    As in the theorem statement, let $\cH \in H(3,2,D)$ where $D\ge D_0$. Apply Theorem \ref{thm:gct} for the given $\eps$, and suppose we have chosen our $D_0$ to be at least as large as the $D_0$ from the theorem. Let us check that $\cH$ satisfies the hypothesis.

    Let $D' \ge \lfloor \eps D /3 \rfloor$ and let
    $H\subseteq \cH$ such that $\Delta(H) \le D'$. Then we can apply Lemma \ref{lem: 3,2 a parameter} to $H$, supposing that we have chosen $D_0$ large enough such that $D'$ is still large. The lemma gives $$b_{L(H), 3D'} (e) \le 1 - \frac29 + \frac{2}{3^5} + \frac{\iota}{2}$$ for all $e\in E(H)$.

    Thus, by Theorem \ref{thm:gct}, we have $\chi'(\cH) \le (1 - \frac29 + \frac{2}{3^5} + \frac{\iota}{2} + \eps)3D = (1 - \frac29 + \frac{2}{3^5} + \iota)3D$, as desired.
\end{proof}

\begin{proof}[Proof of Theorem \ref{thm: 3,3 corr}]
    Just as in the previous proof, let $\iota >0$ and $\eps = \iota/2$, and let $D_0$ be sufficiently large. 

    Let $\cH\in H(3,3,D)$ where $D\ge D_0$. For all $D'\ge \lfloor \eps D/3 \rfloor$ and $H\subset \cH$ with $\Delta(H) \le D'$, Lemma \ref{lem: 3,3 a parameter} implies that $b_{L(H),3D'}(e) \le 1 - \frac19 + \frac{1}{3^5} + \frac{\iota}{2}$ for every $e\in E(H)$. Thus, by Theorem \ref{thm:gct}, we have $\chi'(\cH) \le (1 - \frac{1}{9} + \frac{1}{3^5} + \iota)3D$, as desired.
\end{proof}

\begin{proof}[Proof of Theorem \ref{thm: kk corr}]
    Again, proceed as in the previous proof. Let $\iota >0$ and $\eps = \iota/2$, and let $D_0$ be sufficiently large. 

    Let $\cH\in H(k,k,D)$ where $D\ge D_0$. For all $D'\ge \lfloor \eps D/3 \rfloor$ and $H\subset \cH$ with $\Delta(H) \le D'$, Lemma \ref{lem: kk a parameter} implies that $b_{L(H),kD'}(e) \le  1- \frac{k-1}{2k^2} + \frac{(k-1)(k-2)}{6k^3\sqrt{k}} + \frac{\iota}{2}$ for every $e\in E(H)$. Thus, by Theorem \ref{thm:gct}, we have $\chi'(\cH) \le (1- \frac{k-1}{2k^2} + \frac{(k-1)(k-2)}{6k^3\sqrt{k}} + \iota)kD$, as desired.
\end{proof}

\section{A coloring procedure towards proving Theorem~\ref{thm:gct}}\label{section: 4-gctproof}

\subsection{An algorithm for sampling independent sets}\label{section: algorithm}

We use the randomized procedure introduced by Hurley, de Joannis de Verclos, and Kang \cite{HVK22} to generate a random independent set, which will be useful in assessing the behavior of color classes later in a random coloring procedure.

Define the following \texttt{algorithm}\label{algorithm}: let $\gamma>0$ be a parameter, and let $G$ be a $\Delta$-regular graph. Then generate a random independent set $I$ as follows:
\begin{enumerate}
    \item Activate each vertex of $G$ independently at random with probability $\gamma / \Delta$. Let $A$ be this set of activated vertices.
    \item For each $v\in A$, assign a ``priority,'' that is, a number $\pi(v)$ chosen independently and uniformly at random in $[0,1]$.
    \item For $v,w\in A$ such that $\{v,w\}\in E(G)$, that is, two neighboring vertices, remove the vertex with lower priority $\pi$.
\end{enumerate}
This yields the independent set $I = \{ v\in A : \pi(v) > \pi(w) \text{ for every } w\in N(v)\cap A\} $.

The goal of this subsection is to prove the following theorem concerning some properties of this independent set.

\begin{theorem} \label{thm: 2.1 - ind sets}
    For every $\iota>0$, there exists $\Delta_0$ and $\gamma_0$ such that the following holds: let $G$ be a $\Delta$-regular graph with $\Delta\ge \Delta_0$, and let $I$ be a random independent set obtained by the \textup{\texttt{algorithm}} with parameter $\gamma\ge \gamma_0$. For every vertex $r\in V(G)$, $$ \left| \p [r\in I] - \frac{1-e^{-\gamma}}{\Delta}\right| \le \frac{2}{\Delta^2}.$$
    Moreover, for every $X\subseteq N_G(r)$, we have $$ \p[I\cap X \ne \emptyset] \le \theb_{G[X\cup \{r\}], \Delta}(r) + \iota.$$
\end{theorem}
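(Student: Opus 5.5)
The plan is a direct computation for the first part, and a third-order Bonferroni (inclusion--exclusion) bound for the ``moreover'' part, whose three terms line up with the three terms of $\theb_{G[X\cup\{r\}],\Delta}(r)$.

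\textbf{First part.} Condition on $r\in A$ with priority $p$. Then $r\in I$ exactly when none of its $\Delta$ neighbours is activated with a larger priority. These events are independent across neighbours, and each has probability $1-\gamma(1-p)/\Delta$. Hence
\[
\p[r\in I]=\frac{\gamma}{\Delta}\int_0^1\Bigl(1-\frac{\gamma(1-p)}{\Delta}\Bigr)^{\Delta}dp .
\]
Replacing $(1-x/\Delta)^\Delta$ by $e^{-x}$ gives exactly $\frac{\gamma}{\Delta}\int_0^1e^{-\gamma u}\,du=\frac{1-e^{-\gamma}}{\Delta}$. For the error, I use $0\le e^{-x}-(1-x/\Delta)^\Delta\le x^2e^{-x}/\Delta$ for $0\le x\le \Delta/2$; we may assume $\gamma\le\Delta/2$, choosing $\gamma_0$ first and then $\Delta_0$. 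Substituting $x=\gamma u$, the error is at most
\[
\frac{\gamma}{\Delta}\cdot\frac{1}{\gamma\Delta}\int_0^\infty x^2e^{-x}\,dx=\frac{2}{\Delta^2}.
\]

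\textbf{Moreover part.} By Bonferroni,
\[
\p[I\cap X\neq\emptyset]\le\sum_{x\in X}\p[x\in I]-\sum_{\{x,y\}}\p[x,y\in I]+\sum_{\{x,y,z\}}\p[x,y,z\in I].
\]
Since $I$ is independent, only non-adjacent pairs and independent triples in $X$ contribute; there are $P_{G[X\cup\{r\}]}(r)$ and $T_{G[X\cup\{r\}]}(r)$ of these respectively.
\begin{itemize}
\item \emph{Singletons.} By the first part the singleton sum is at most $|X|/\Delta+2|X|/\Delta^2\le \deg(r)/\Delta+o(1)$.
\item \emph{Pairs, lower bound.} For a non-adjacent pair $\{x,y\}$, condition on $x,y\in A$ with given priorities. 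Give each other vertex $w$ the totally ordered state ``inactive, or active with priority $\pi(w)$'', with inactive as the minimum. Both events ``$x$ survives'' and ``$y$ survives'' are decreasing in these independent coordinates. Harris's inequality then gives $\p[x,y\in I]\ge\p[x\in I]\,\p[y\in I]$, using that $y\notin N(x)$. Hence $\p[x,y\in I]\ge\bigl((1-e^{-\gamma})/\Delta-2/\Delta^2\bigr)^2$. Since $P\le\Delta^2$, the pair term is at most $-P/\Delta^2+\iota/3$ once $e^{-\gamma_0}$ is small and $\Delta_0$ is large.
\item \emph{Triples, upper bound.} This is the step I expect to be the main obstacle, because shared neighbours create positive correlation and Harris's inequality points the wrong way. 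My plan is to compute exactly with priorities $p_x,p_y,p_z$ fixed. The probability factorises as $(\gamma/\Delta)^3$ times, for each vertex $w$ adjacent to a nonempty $S\subseteq\{x,y,z\}$, the factor $1-\frac{\gamma}{\Delta}(1-\min_{s\in S}p_s)$. I compare this with the independent product $\prod_{s\in S}\bigl(1-\frac{\gamma}{\Delta}(1-p_s)\bigr)$, which would reproduce $\prod_s$ of the single-vertex integrands. Each $w$ with $|S|\ge2$ inflates the ratio by at most $1+O(\gamma^2/\Delta^2)$, and there are at most $3\Delta$ such $w$. So the total inflation is at most $e^{O(\gamma^2/\Delta)}=1+o(1)$ for fixed $\gamma$. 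After integrating, $\p[x,y,z\in I]\le(1+o(1))\bigl((1-e^{-\gamma})/\Delta\bigr)^3\le(1+o(1))/\Delta^3$, and since $T\le\Delta^3$ the triple term is at most $T/\Delta^3+\iota/3$.
\end{itemize}

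\textbf{Combining.} Adding the three bounds gives $\p[I\cap X\ne\emptyset]\le\theb_{G[X\cup\{r\}],\Delta}(r)+\iota$. The constants are chosen in order: first $\gamma_0$ so that $e^{-\gamma_0}\le\iota/10$, then $\Delta_0$ large in terms of $\gamma$. This needs $\gamma$ bounded in terms of $\Delta$, so I would apply the argument with $\gamma$ fixed in $[\gamma_0,2\gamma_0]$.
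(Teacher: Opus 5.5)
Your computation of $\Pr[r\in I]$ is correct and matches the paper's first claim. The Harris lower bound $\Pr[x,y\in I]\ge \Pr[x\in I]\Pr[y\in I]$ for non-adjacent $x,y$ is also valid. The gap is the triple estimate, and it is not a technicality.

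Take a common neighbour $w$ of $x$ and $y$, and set $a_s:=\frac{\gamma}{\Delta}(1-p_s)$. The true factor is $1-\max(a_x,a_y)$ and the independent product is $(1-a_x)(1-a_y)$, so the ratio is exactly $1/(1-\min(a_x,a_y))=1+\Theta(\gamma/\Delta)$. The events ``$w$ kills $x$'' and ``$w$ kills $y$'' share the activation of $w$, so the correlation is first order, not $O(\gamma^2/\Delta^2)$. Over $\Theta(\Delta)$ common neighbours this compounds to a factor $e^{\Theta(\gamma)}$ pointwise, and to a constant factor after integrating over priorities. For example, if $x,y,z$ have identical neighbourhoods, then for large $\gamma$ you get $\Pr[x,y,z\in I]\approx 6/\Delta^3$ and $\Pr[x,y\in I]\approx 2/\Delta^2$, since $\int_{[0,\infty)^3}e^{-\max(u,v,w)}=6$ and $\int_{[0,\infty)^2}e^{-\max(u,v)}=2$.

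In fact, no way of estimating the three terms can make third-order Bonferroni work in the original graph. Take $G=K_{\Delta,\Delta}$ and $X=N(r)$. Then $|I\cap X|$ counts the right-side actives above the top left-side active, so for large $\gamma$ it is essentially geometric with $\Pr[|I\cap X|=j]\approx 2^{-(j+1)}$. Hence
\[
\mathbb{E}|I\cap X|-\mathbb{E}\tbinom{|I\cap X|}{2}+\mathbb{E}\tbinom{|I\cap X|}{3}\approx 1-1+1=1,
\]
while $b_{G[X\cup\{r\}],\Delta}(r)\to 1-\frac12+\frac16=\frac23$, and the true probability is about $\frac12$.

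The paper, following Hurley, de Joannis de Verclos and Kang, supplies two ingredients your proposal lacks.

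\emph{The reduction.} It first reduces to the case where no two vertices of $N_G(r)$ have a common neighbour outside $N_G[r]$. It does this by passing to a $\Delta$-regular graph $G'$ with the same neighbourhood of $r$ in which $\Pr[I'\cap X\neq\emptyset]$ can only be larger. This removes the $K_{\Delta,\Delta}$ obstruction.

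\emph{The joint pair--triple estimate.} Common neighbours inside $N(r)$ still inflate pair and triple probabilities by constant factors, so the paper does not bound the pair and triple terms separately. It keeps the pair probabilities exactly, as functions of $\ell_{uv}=|N(u)\cap N(v)|/\Delta$. It then shows that the excess of the triple term over $T/\Delta^3$ is at most one third of the excess of the pair term over $P/\Delta^2$. This uses the fact that a non-adjacent pair $uv$ lies in at most about $(1-\ell_{uv})\Delta$ independent triples. The conclusion is $\mathbb{E}\binom{|I\cap X|}{2}-\mathbb{E}\binom{|I\cap X|}{3}\ge P/\Delta^2-T/\Delta^3-o(1)$.

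Your Harris step discards precisely the positive pair correlation that is needed to pay for the triple correlation.
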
 

Theorem \ref{thm: 2.1 - ind sets} is a generalization of \cite[Theorem 2.1]{HVK22}. Their result has the additional hypothesis that $G$ is \textit{$\sigma$-sparse} for some $\sigma>0$, where a graph $G$ is said to be $\sigma$-sparse if for every $v\in V(G)$, the subgraph $G[N(v)]$ has at most $(1-\sigma)\binom{\Delta(G)}{2}$ edges. They have the same conclusion that $|\Prob{r\in I}-\frac{1-e^{-\gamma}}{\Delta}| \le \frac{2}{\Delta^2}$, but their proof of this part does not require that $G$ is $\sigma$-sparse. Instead of concluding that $\p[I\cap X \ne \emptyset] \le \theb_{G[X\cup \{r\}], \Delta}(r) + \iota$ for every $X\subseteq N_G(r)$, they conclude that $\Prob{N_G(r)\cap I \ne \emptyset}\le (1-\frac{\sigma}{2}+\frac{\sigma^{3/2}}{6} + \iota)\Expect{|N_G(r)\cap I|}$, for which they do require the $\sigma$-sparsity assumption. If $G$ is $\Delta$-regular, then $G$ is $\sigma$-sparse if, and only if, $P_G(v)\ge \sigma \binom{\Delta}{2}$ for all $v\in V(G)$. By a result of Rivin \cite{RIVIN02}, for any graph $G$, we have $T_{G}(v) \le \frac{\sqrt{2}}{3}(P_G(v))^{3/2}$, whence for any $\sigma$-sparse $\Delta$-regular graph $G$, we have $b_{G,\Delta}(v) \le 1 - \frac{\sigma}{2} + \frac{\sigma^{3/2}}{6}$. 

Our proof is similar to that of \cite[Theorem 2.1]{HVK22}, and in the following proof, we reuse parts of their argument that do not require that $G$ is $\sigma$-sparse.

\begin{proof}[Proof of Theorem \ref{thm: 2.1 - ind sets}]
Let $I$ be a random independent set obtained by the \texttt{algorithm} applied to $G$ with parameter $\gamma$, and let $r\in V(G)$. The following is equivalent to \cite[Claim 1]{HVK22}; their proof does not require that $G$ is $\sigma$-sparse, so we omit the proof. 

\begin{claim} \label{clm: 1-prob} 
    For every vertex $v\in V(G)$, it holds that $$ \Prob{v\in I} = \frac{1}{\Delta} \int_0^{\gamma} (1-\frac{x}{\Delta})^{\Delta} dx \in \left[ \frac{1-e^{-\gamma}}{\Delta} - \frac{2}{\Delta^2}, \frac{1-e^{-\gamma}}{\Delta}  \right].$$
\end{claim}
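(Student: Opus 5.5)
The plan is to compute $\Prob{v\in I}$ exactly by conditioning on $v$'s priority, then bound the resulting integral.

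\textbf{Exact formula.} Fix $v$. The event $v\in I$ happens exactly when two things hold:
\begin{itemize}
\item $v$ is activated, which has probability $\gamma/\Delta$;
\item each of its $\Delta$ neighbours is either not activated, or activated with a lower priority than $v$.
\end{itemize}
Condition on $v\in A$ with $\pi(v)=p$. Each neighbour $w$ independently ``blocks'' $v$ exactly when $w\in A$ and $\pi(w)>p$, which has probability $(\gamma/\Delta)(1-p)$. Since $G$ is $\Delta$-regular, integrating over $p$ uniform on $[0,1]$ gives
\[
\Prob{v\in I}=\frac{\gamma}{\Delta}\int_0^1\Bigl(1-\frac{\gamma(1-p)}{\Delta}\Bigr)^{\Delta}\,dp .
\]
The substitution $x=\gamma(1-p)$ turns this into $\frac1\Delta\int_0^\gamma(1-x/\Delta)^\Delta\,dx$, which is the claimed identity. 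Here we need $\gamma\le\Delta$ so that the base is nonnegative. This is harmless: $\gamma$ is a fixed constant and $\Delta\ge\Delta_0$, so choose $\Delta_0\ge\gamma$ (or $\Delta_0\ge 2\gamma$ if that is more convenient).

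\textbf{Upper bound.} Use $1-y\le e^{-y}$ to get $(1-x/\Delta)^\Delta\le e^{-x}$. Integrating,
\[
\int_0^\gamma\Bigl(1-\frac{x}{\Delta}\Bigr)^{\Delta}dx\le\int_0^\gamma e^{-x}\,dx=1-e^{-\gamma},
\]
so $\Prob{v\in I}\le (1-e^{-\gamma})/\Delta$.

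\textbf{Lower bound.} For $0\le x\le\Delta$ we have $\log(1-y)\ge -y-y^2$ when $y\le 1/2$. This gives the standard estimate
\[
\Bigl(1-\frac{x}{\Delta}\Bigr)^{\Delta}\ge e^{-x}\Bigl(1-\frac{x^2}{\Delta}\Bigr),
\]
valid on the relevant range. It can also be derived from $(1-y)^{n}\ge e^{-ny}(1-ny^2)$ for $0\le y\le 1$.

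The deficit is then at most
\[
\frac1\Delta\int_0^\infty x^2e^{-x}\,dx=\frac{2}{\Delta}.
\]
Dividing by the outer factor $\Delta$ gives the error $2/\Delta^2$, so $\Prob{v\in I}\ge(1-e^{-\gamma})/\Delta-2/\Delta^2$.

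\textbf{Main obstacle.} The only delicate point is justifying the inequality $(1-y)^n\ge e^{-ny}(1-ny^2)$ on the whole range $0\le y\le 1$. Since $\gamma\le\Delta$, we only ever need $y=x/\Delta\le 1$. To prove it:
\begin{itemize}
\item If $ny^2\ge1$, the right-hand side is $\le 0$ and the inequality is trivial.
\item Otherwise $y<1/\sqrt n$, which is small. Then $n\log(1-y)\ge -ny-ny^2$ holds for $y\le1/2$, and $e^{-ny^2}\ge 1-ny^2$ finishes it.
\end{itemize}
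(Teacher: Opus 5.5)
Your proof is correct and takes the same route as the argument the paper defers to (Claim 1 of Hurley, de Joannis de Verclos, and Kang; the paper omits the proof): you condition on $\pi(v)$ to get the exact integral, bound it using $e^{-x}(1-x^2/\Delta)\le(1-x/\Delta)^{\Delta}\le e^{-x}$ on $[0,\gamma]\subseteq[0,\Delta]$, and finish with $\int_0^\infty x^2e^{-x}\,dx=2$. Your two implicit assumptions are harmless for large $\Delta$: $\gamma\le\Delta$, and $\Delta\ge 4$, which is needed so that $\Delta y^2<1$ forces $y\le 1/2$.
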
 
Note that Claim~\ref{clm: 1-prob} implies that $\left| \p [r\in I] - \frac{1-e^{-\gamma}}{\Delta}\right| \le \frac{2}{\Delta^2}$, as desired, so now we consider the second part of the theorem. 

As in \cite[Claim 2]{HVK22}, we may assume without loss of generality that no pair of distinct vertices in $N_G(r)$ have a common neighbor outside of $N_G[r]$, as otherwise, we can construct another $\Delta$-regular graph $G'$ where the neighborhood of $r$ is the same as in $G$ but no pair of distinct vertices in $N(r)$ have a common neighbor outside of $N[r]$ and $\Prob{I\cap X \ne \emptyset} \le \Prob{I'\cap X \ne \emptyset}$ for every $X\subseteq N(r)$, where $I'$ is the random independent set generated by the \texttt{algorithm} applied to $G'$. Now we let $X\subseteq N_G(r)$, and we proceed to show that $\p[I\cap X \ne \emptyset] \le \theb_{G[X\cup \{r\}], \Delta}(r) + \iota$.

For ease of notation, let $I_X = X\cap I$, let $\overline{P_X}:=\binom{|I_X|}{2}$,
and let $\overline{T_X}:=\binom{|I_X|}{3}$. 

Then by the Bonferroni Inequalities, 
\begin{align}
    \label{eqn Bonferroni/PIE}
    \p [I_X \ne \emptyset ] \le \E [|I_X|] - \E[\overline{P_X}] + \E [\overline{T_X}].
\end{align}

    Additionally, define $\mathcal{J}_n$ to be the collection of independent sets of size $n$ in $X$. Observe that $P_{G[X\cup \{r\}]}(r) = |\mathcal{J}_2|$ and $T_{G[X\cup \{r\}]}(r) = |\mathcal{J}_3|$.

    Given a pair $uv\in \mathcal{J}_2$, define the parameter $\ell_{uv} := \frac{1}{\Delta} |N_{G}(v) \cap N_{G}(u)|$, and note that since every common neighbor of $u$ and $v$ is in $N_G[r]\setminus \{u,v\}$, we have $\ell_{uv}\le \frac{1}{\Delta}(\Delta-1) = 1-\frac{1}{\Delta}<1$.

The following claim is proved in \cite[Claim 3 of Theorem 2.1]{HVK22} for the case of $X=N_G(r)$. The proof does not require that $G$ is $\sigma$-sparse and generalizes easily for $X\subseteq N_G(r)$, so we omit the proof. 
    
\begin{claim}\label{clm: 4.1.2 pairs expect} 
$$\E [\overline{P_X}] = \frac{1}{\Delta^2} \sum_{uv\in \mathcal{J}_2} \frac{2}{(1-\ell_{uv})(2-\ell_{uv})} + o_{\gamma}(1) + o_{\Delta}(1).$$
\end{claim}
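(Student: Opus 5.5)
The plan is to compute $\E[\overline{P_X}]$ exactly as a sum over pairs and then evaluate each pair's contribution asymptotically. Since $\overline{P_X}=\binom{|I_X|}{2}$ counts unordered pairs of vertices of $X$ that both lie in $I$, linearity gives
\[
\E[\overline{P_X}]=\sum_{\{u,v\}\subseteq X}\Prob{u\in I,\ v\in I}.
\]
If $uv\in E(G)$, then $u$ and $v$ cannot both be in the independent set $I$. So only pairs in $\mathcal J_2$ contribute, and it suffices to show that for each $uv\in\mathcal J_2$,
\[
\Prob{u,v\in I}=\frac{1}{\Delta^2}\,\frac{2}{(1-\ell_{uv})(2-\ell_{uv})}+\frac{1}{\Delta^2}\bigl(o_\gamma(1)+o_\Delta(1)\bigr),
\]
with error terms uniform in the pair. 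Since $|\mathcal J_2|\le\binom{\Delta}{2}$, summing the errors then costs only $o_\gamma(1)+o_\Delta(1)$ in total.

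\emph{Computing a single pair.} Fix $uv\in\mathcal J_2$ and write $\ell=\ell_{uv}$. By the reduction made just before the claim, every common neighbour of $u$ and $v$ lies in $N_G[r]$. Hence the neighbourhoods split into exactly $\ell\Delta$ common neighbours and $(1-\ell)\Delta$ private neighbours of each of $u$ and $v$, all distinct from $u$ and $v$ because $uv\notin E(G)$. I will condition on $u,v\in A$, which has probability $(\gamma/\Delta)^2$, and on their priorities $\pi(u)=x$ and $\pi(v)=y$. Then $u,v\in I$ if and only if:
\begin{itemize}
\item no private neighbour of $u$ is activated with priority above $x$;
\item no private neighbour of $v$ is activated with priority above $y$;
\item no common neighbour is activated with priority above $\min(x,y)$.
\end{itemize}
These events concern disjoint vertex sets and are therefore independent. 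This gives
\[
\Prob{u,v\in I}=\frac{\gamma^2}{\Delta^2}\int_0^1\!\!\int_0^1\Bigl(1-\tfrac{\gamma(1-x)}{\Delta}\Bigr)^{(1-\ell)\Delta}\Bigl(1-\tfrac{\gamma(1-y)}{\Delta}\Bigr)^{(1-\ell)\Delta}\Bigl(1-\tfrac{\gamma(1-\min(x,y))}{\Delta}\Bigr)^{\ell\Delta}dx\,dy .
\]
Substituting $s=\gamma(1-x)$ and $t=\gamma(1-y)$ turns $\min(x,y)$ into $\max(s,t)$. The factor $\gamma^2$ is absorbed, leaving $\Delta^{-2}$ times an integral over $[0,\gamma]^2$ of products of the form $(1-a/\Delta)^{c\Delta}$.

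\emph{Error terms.} Three approximations are needed.
\begin{itemize}
\item For fixed $\gamma$, $(1-a/\Delta)^{c\Delta}=e^{-ca}(1+O(\gamma^2/\Delta))$ uniformly for $a\le\gamma$ and $c\le1$. This produces the $o_\Delta(1)$ term.
\item Extending the domain from $[0,\gamma]^2$ to $[0,\infty)^2$ costs at most $O(\gamma e^{-\gamma/2})$-type tails, because the exponent $\ell\max(s,t)+(1-\ell)(s+t)$ is at least $\max(s,t)$. This produces the $o_\gamma(1)$ term.
\item The bound $\ell\le1-1/\Delta$ noted above keeps the limiting constant finite.
\end{itemize}

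\emph{Evaluating the limit integral (main obstacle).} What remains is the closed-form evaluation of
\[
\int_0^\infty\!\!\int_0^\infty e^{-\ell\max(s,t)-(1-\ell)(s+t)}\,ds\,dt .
\]
I will split it by symmetry into twice the region $s>t$, where the exponent simplifies to $s+(1-\ell)t$, and then integrate in $s$ first and $t$ second. This step needs care. My quick evaluation of this integral gives $2/(2-\ell)$, which does not match the claimed $2/\bigl((1-\ell)(2-\ell)\bigr)$. Before trusting either, I would re-derive the conditional survival probability, in particular how the common neighbours (including $r$) and the normalisation by $\Delta$ enter, and compare it line by line with the original computation in \cite{HVK22}.

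Apart from resolving that constant, the argument is bookkeeping: keeping the error terms uniform over pairs so that the sum over $\mathcal J_2$ stays within $o_\gamma(1)+o_\Delta(1)$.
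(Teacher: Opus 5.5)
Your derivation is correct, and the mismatch you found is not your mistake: the claim as printed is false, so the ``main obstacle'' cannot be resolved in its favour. The paper gives no argument of its own here; it only defers to Claim~3 of Hurley, de Joannis de Verclos and Kang, and your direct computation is the natural proof.

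A common neighbour $w$ of $u,v$ must lose to \emph{both} of them, i.e.\ $\pi(w)<\min(x,y)$. After your substitution this is the factor $e^{-\ell\max(s,t)}$. On $\{s>t\}$ the exponent is $-s-(1-\ell)t$, so
\[
\int_0^\infty\!\!\int_0^\infty e^{-\ell\max(s,t)-(1-\ell)(s+t)}\,ds\,dt=2\int_0^\infty e^{-(2-\ell)t}\,dt=\frac{2}{2-\ell}.
\]
The printed value $2/((1-\ell)(2-\ell))$ is exactly what the same integral gives with $\min(s,t)$ in place of $\max(s,t)$. That would correspond to common neighbours only having to lose to the higher-priority of $u$ and $v$, which is not the event $\{u,v\}\subseteq I$.

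Here is a concrete counterexample to the statement. Let $\Delta$ be even and let $Q$ be $K_\Delta$ minus a perfect matching. Take two disjoint copies of $Q$, add an apex ($r$, resp.\ $r'$) adjacent to all of its copy, and join corresponding vertices of the two copies by a perfect matching. This graph is $\Delta$-regular, and no two vertices of $N(r)$ share a neighbour outside $N[r]$.

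Take $X=N(r)$. Then $\mathcal{J}_2$ is the set of $\Delta/2$ non-edges of $Q$, each with $\ell_{uv}=1-1/\Delta$. So the right-hand side of the claim is $\frac{\Delta}{\Delta+1}+o_\gamma(1)+o_\Delta(1)\to 1$. But $Q$ has no independent triple, so $|I_X|\le 2$ and hence $\overline{P_X}\le |I_X|/2$. By Claim~\ref{clm: 1-prob}, $\mathbb{E}[\overline{P_X}]\le\frac12\mathbb{E}|I_X|<\frac12$. The true value is about $1/\Delta$, as $2/(2-\ell)$ predicts.

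So what your plan actually proves, once the uniform error bookkeeping you describe is written out, is
\[
\mathbb{E}[\overline{P_X}]=\frac{1}{\Delta^2}\sum_{uv\in\mathcal{J}_2}\frac{2}{2-\ell_{uv}}+o_\gamma(1)+o_\Delta(1).
\]
Your third error bullet is then unnecessary, since $1\le 2/(2-\ell)\le 2$.

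This correction is not cosmetic for the paper. Claim~\ref{Clm: pairs-triples} subtracts the bound of Claim~\ref{Clm: triples}, which is stated with the same function $2/((1-\ell)(2-\ell))$. With the correct pair term, the per-pair coefficient there becomes $\frac{2}{2-\ell}-\frac13\bigl(\frac{2}{(1-\ell)(2-\ell)}-1\bigr)$, which equals $7/9<1$ at $\ell=1/2$. So, as written, that step no longer goes through; it has to be redone with a triple bound matched to the correct pair formula.
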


We also need the following claim about the expected number of triples. 
\begin{claim}\label{Clm: triples} 
     \begin{align*}
        \displaystyle \E [\overline{T_X}] 
        &\le \frac{|\mathcal{J}_3|}{\Delta^3} + \frac{1}{3\Delta^2} \cdot \sum_{uv\in \mathcal{J}_2} \left( \frac{2}{(2-\ell_{uv})(1-\ell_{uv})}-1 \right).
    \end{align*}
\end{claim}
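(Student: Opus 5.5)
The plan is to write $\E[\overline{T_X}]$ as a sum over independent triples, estimate each term by an explicit integral in the style of Claims~\ref{clm: 1-prob} and~\ref{clm: 4.1.2 pairs expect}, and compare it with $1/\Delta^3$ plus pair corrections. Since $I_X$ is independent in $G$, only independent triples of $X$ can lie in $I$, so
\[
\E[\overline{T_X}] = \sum_{\{u,v,w\}\in\mathcal J_3}\p[u,v,w\in I].
\]
Fix a triple $S=\{u,v,w\}\in\mathcal J_3$. As in the proof of Claim~\ref{clm: 1-prob}, I will parametrize an active vertex's priority by $x_y=\gamma(1-\pi(y))\in[0,\gamma]$, so that activation with priority above $\pi(y)$ has probability $x_y/\Delta$. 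Then $S\subseteq I$ exactly when $u,v,w$ are all active and no vertex $z\in N(u)\cup N(v)\cup N(w)$ is active with priority exceeding that of some neighbor of $z$ in $S$. By independence over the vertices $z$,
\[
\p[S\subseteq I]=\frac1{\Delta^3}\int_{[0,\gamma]^3}\prod_{z}\Bigl(1-\tfrac{1}{\Delta}\max_{y\in S\cap N(z)}x_y\Bigr)\,dx_u\,dx_v\,dx_w .
\]
Each of $u,v,w$ has exactly $\Delta$ neighbors, and by the reduction of \cite[Claim 2]{HVK22} every common neighbor lies in $N_G[r]$. So $\ell_{uv}\Delta$ vertices $z$ see both $u$ and $v$, and so on, and at most $\Delta$ vertices see all three.

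Next I will bound the product by $\exp(-\frac1\Delta\sum_z\max_{y\in S\cap N(z)}x_y)$. I will then lower bound the exponent $E_S(x)$ by decomposing over which subset of $S$ the vertex $z$ sees. The identity to use is
\[
\max(a,b,c)\ge \tfrac13\bigl(\max(a,b)+\max(b,c)+\max(a,c)\bigr).
\]
Combined with each vertex contributing its own degree, this gives $E_S\ge\frac13\sum_{\text{pairs }\{y,y'\}\subseteq S}E_{yy'}$ plus single-vertex terms, where $E_{yy'}=(2-\ell_{yy'})\max(x_y,x_{y'})+\ell_{yy'}\cdot(\ldots)$ is exactly the exponent appearing in the two-vertex computation behind Claim~\ref{clm: 4.1.2 pairs expect}. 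The aim is the per-triple inequality
\[
\Delta^3\p[S\subseteq I]\le 1+\frac{1}{3\Delta}\sum_{\{y,y'\}\subseteq S}\Bigl(\frac{2}{(2-\ell_{yy'})(1-\ell_{yy'})}-1\Bigr).
\]
Here the $1$ is the contribution of the "no overlap" regime, i.e.\ $\int_{[0,\infty)^3}e^{-(x_u+x_v+x_w)}=1$. Each pair correction is the excess of the two-variable integral $\int\!\!\int e^{-E_{yy'}}$, which equals $2/((2-\ell)(1-\ell))$, over the disjoint value $1$. The extra $1/\Delta$ factor reflects that this excess is shared among the up to $|X|\le\Delta$ triples containing the pair.

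Summing over $S\in\mathcal J_3$, each pair $uv\in\mathcal J_2$ lies in at most $|X|\le\Delta$ triples. This yields
\[
\E[\overline{T_X}]\le\frac{|\mathcal J_3|}{\Delta^3}+\frac1{3\Delta^2}\sum_{uv\in\mathcal J_2}\Bigl(\frac{2}{(2-\ell_{uv})(1-\ell_{uv})}-1\Bigr).
\]
Note that each summand is nonnegative because $\ell_{uv}\in[0,1)$.

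The main obstacle is the per-triple integral inequality. Splitting the three-variable integrand into a product-like "disjoint" part and pairwise "overlap" parts is not literally multiplicative, so I would first try Hölder's inequality with exponents $(3,3,3)$ applied to $\exp(-\frac13\sum_{\text{pairs}}E_{yy'})$. If that is too lossy, I would instead condition on the ordering of $x_u,x_v,x_w$. On each of the six orderings the maxima become linear, so the integral can be computed in closed form as a rational function of $\ell_{uv},\ell_{vw},\ell_{uw}$ and of the fraction of triple-common neighbors, and I would compare it directly with the target. A further technical point is controlling the truncation to $[0,\gamma]$ and the replacement of $(1-t/\Delta)^\Delta$ by $e^{-t}$. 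Since we only need an upper bound, both steps go in the favorable direction.
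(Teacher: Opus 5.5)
\textbf{There is a genuine gap: your per-triple target is false, and the summation built on it does not produce the claim.} You aim to show
\[
\Delta^3\,\mathbb{P}[S\subseteq I]\le 1+\frac{1}{3\Delta}\sum_{\{y,y'\}\subseteq S}\Bigl(\frac{2}{(2-\ell_{yy'})(1-\ell_{yy'})}-1\Bigr).
\]
The excess of $\Delta^3\mathbb{P}[S\subseteq I]$ over $1$ is of order $1$, not $1/\Delta$. For example, suppose $u,v,w$ share $\Delta/2$ common neighbours ($r$ and $\Delta/2-1$ further vertices of $N(r)$) and are otherwise disjoint. Then the exponent is $\frac12(x_u+x_v+x_w)+\frac12\max(x_u,x_v,x_w)$, and $\Delta^3\mathbb{P}[S\subseteq I]\to 2$ (let $\Delta\to\infty$, then $\gamma\to\infty$).

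The bookkeeping also does not close. Summing your per-triple bound over $\mathcal{J}_3$, with each pair in at most $\Delta$ triples, gives a second term $\frac{1}{3\Delta^3}\sum_{uv}(\cdots)$, not $\frac{1}{3\Delta^2}\sum_{uv}(\cdots)$. You reach the stated expression only because two factors of $\Delta$ cancel. The ``shared excess'' heuristic is backwards: each triple carries its own excess of order $1/\Delta^3$, and the up to $\Delta$ triples through a pair produce the $1/\Delta^2$.

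The correct per-triple bound is
\[
\Delta^3\,\mathbb{P}[S\subseteq I]\le 1+\frac13\sum_{\{y,y'\}\subseteq S}\Bigl(\frac{2}{(2-\ell_{yy'})(1-\ell_{yy'})}-1\Bigr),
\]
with no $1/\Delta$. Summed over $\mathcal{J}_3$, this is what the paper imports from \cite[(2.4)]{HVK22}. The paper then bounds the number of triples through $uv$ by $(1-\ell_{uv})\Delta$, using the common-neighbour reduction, and discards the factor $1-\ell_{uv}$. Your cruder count $|X|\le\Delta$ works equally well.

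\textbf{How to prove the corrected bound.} Your H\"older plan cannot work. Each $E_{yy'}$ is constant in the third variable, so H\"older over $[0,\gamma]^3$ costs a factor of order $\gamma$. More fundamentally, the corrected bound is attained in the limit, e.g.\ when all three $\ell_{yy'}$ equal $1/3$ and $r$ is the only common neighbour of $u,v,w$. So no lossy step is affordable.

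Relatedly, $\frac{2}{(2-\ell)(1-\ell)}$ is not the two-variable integral: $\int_0^\infty\!\int_0^\infty e^{-(1-\ell)(x+y)-\ell\max(x,y)}\,dx\,dy=\frac{2}{2-\ell}$. It is the three-variable integral in that symmetric configuration.

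Your fallback of conditioning on the ordering does go through. Set $\tau:=|N(u)\cap N(v)\cap N(w)|/\Delta$. On $\{x_u\ge x_v\ge x_w\}$ the exponent is $x_u+(1-\ell_{uv})x_v+(1-\ell_{uw}-\ell_{vw}+\tau)x_w$. Summing over the six orderings gives
\[
\int_{[0,\infty)^3}e^{-E_S}=\frac{2}{3-s+\tau}\sum_{p}\frac{1}{2-\ell_p},
\]
where $p$ ranges over the three pairs of $S$ and $s:=\sum_p\ell_p<3$. The right side of the corrected target equals $\frac23\sum_p\frac{1}{(2-\ell_p)(1-\ell_p)}$. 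So after dropping $\tau\ge 0$, it suffices to show
\[
\frac{3}{3-s}\sum_p\frac{1}{2-\ell_p}\le\sum_p\frac{1}{(2-\ell_p)(1-\ell_p)}.
\]
This follows from Chebyshev's sum inequality applied to the similarly ordered sequences $\frac{1}{2-\ell_p}$ and $\frac{1}{1-\ell_p}$, followed by Jensen's inequality for the convex function $\frac{1}{1-\ell}$. With these repairs, your route becomes a self-contained proof of the input the paper takes from \cite{HVK22}.
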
 
\begin{proof}[Proof of Claim \ref{Clm: triples}]
    As shown in the proof of \cite[Claim 5 of Theorem 2.1]{HVK22} (see equation (2.4)), we have $$\Expect{\overline{T_X}} \le 
    \frac{|\mathcal{J}_3|}{\Delta^3} + \frac{1}{3\Delta^3} \cdot \sum_{uvw\in \mathcal{J}_3} \sum_{ab\in \{uv,uw,vw\} } \left( \frac{2}{(2-\ell_{ab})(1-\ell_{ab})}-1 \right),$$
    and 
    $$ \sum_{uvw\in \mathcal{J}_3} \sum_{ab\in \{uv,uw,vw\} } \left( \frac{2}{(2-\ell_{ab})(1-\ell_{ab})}-1 \right) 
    \le 
    \sum_{uv\in \mathcal{J}_2} (1-\ell_{uv})\Delta \cdot \left( \frac{2}{(2-\ell_{uv})(1-\ell_{uv})}-1 \right).$$
    Since $\ell_{uv}\ge 0$ for every $uv\in \mathcal{J}_2$, we have 
    $$ \sum_{uv\in \mathcal{J}_2} (1-\ell_{uv})\Delta \cdot \left( \frac{2}{(2-\ell_{uv})(1-\ell_{uv})}-1 \right)
    \le 
    \Delta \sum_{uv\in \mathcal{J}_2} \left( \frac{2}{(2-\ell_{uv})(1-\ell_{uv})}-1 \right).$$
    Combining the inequalities above yields the claim.
\end{proof}

\begin{claim}\label{Clm: pairs-triples} 
$$\E [\overline{P_X}] - \E[\overline{T_X}] \ge \frac{|\mathcal{J}_2|}{\Delta^2} - \frac{|\mathcal{J}_3|}{\Delta^3} + o_{\gamma}(1) + o_{\Delta}(1).$$
\end{claim}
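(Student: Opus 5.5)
The plan is to subtract the bound of Claim~\ref{Clm: triples} from the expression for $\E[\overline{P_X}]$ in Claim~\ref{clm: 4.1.2 pairs expect}. Everything then reduces to a termwise inequality over $uv\in\mathcal J_2$. For brevity write $g(\ell):=\frac{2}{(1-\ell)(2-\ell)}$ for $\ell\in[0,1)$. Combining the two claims gives
\begin{align*}
\E[\overline{P_X}]-\E[\overline{T_X}] &\ge \frac{1}{\Delta^2}\sum_{uv\in\mathcal J_2} g(\ell_{uv}) - \frac{|\mathcal J_3|}{\Delta^3} - \frac{1}{3\Delta^2}\sum_{uv\in\mathcal J_2}\bigl(g(\ell_{uv})-1\bigr) + o_\gamma(1)+o_\Delta(1)\\
&= \frac{1}{\Delta^2}\sum_{uv\in\mathcal J_2}\Bigl(\tfrac{2}{3}g(\ell_{uv})+\tfrac13\Bigr) - \frac{|\mathcal J_3|}{\Delta^3} + o_\gamma(1)+o_\Delta(1).
\end{align*}

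The key step is the elementary fact that $g(\ell)\ge 1$ on $[0,1)$. Indeed $(1-\ell)(2-\ell)=2-3\ell+\ell^2$, and on $[0,1)$ we have $3\ell-\ell^2 = \ell(3-\ell)\ge 0$, so $(1-\ell)(2-\ell)\le 2$ and hence $g(\ell)\ge 1$. Recall also that $\ell_{uv}\in[0,1-1/\Delta]$, so $g$ is well-defined at every $\ell_{uv}$. Consequently each summand satisfies $\frac23 g(\ell_{uv})+\frac13\ge 1$, and the sum is at least $|\mathcal J_2|$. This yields
\[
\E[\overline{P_X}]-\E[\overline{T_X}]\ge \frac{|\mathcal J_2|}{\Delta^2}-\frac{|\mathcal J_3|}{\Delta^3}+o_\gamma(1)+o_\Delta(1),
\]
which is the claim.

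The only real bookkeeping concern is that the error terms are additive and uniform, so that subtracting them preserves the $o_\gamma(1)+o_\Delta(1)$ form. Claim~\ref{Clm: triples} is an exact upper bound with no error term, and Claim~\ref{clm: 4.1.2 pairs expect} carries the only error, so this is immediate. I do not expect a genuine obstacle here; the substance lies in the previously established Claims~\ref{clm: 4.1.2 pairs expect} and~\ref{Clm: triples}. Afterwards, the claim will be combined with \eqref{eqn Bonferroni/PIE} and $\E|I_X|\le |X|/\Delta \le \deg_{G[X\cup\{r\}]}(r)/\Delta$ from Claim~\ref{clm: 1-prob}, identifying $|\mathcal J_2|=P$ and $|\mathcal J_3|=T$, to obtain $\p[I_X\neq\emptyset]\le b_{G[X\cup\{r\}],\Delta}(r)+\iota$.
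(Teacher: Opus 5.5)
Your proposal is correct and follows the paper's proof essentially verbatim: you combine Claims~\ref{clm: 4.1.2 pairs expect} and~\ref{Clm: triples} to get the summands $\frac{4}{3(1-\ell_{uv})(2-\ell_{uv})}+\frac13$ and bound each below by $1$ using $\ell_{uv}\ge 0$. The only cosmetic difference is that you check $(1-\ell)(2-\ell)\le 2$ directly, whereas the paper uses that $x\mapsto \frac{4}{(2-x)(1-x)}$ is increasing on $[0,1)$ and evaluates it at $x=0$.
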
%
\begin{proof}[Proof of Claim \ref{Clm: pairs-triples}]
By Claim \ref{clm: 4.1.2 pairs expect} and \ref{Clm: triples}, we have 
\begin{align*}
    \E[\overline{P_X}] - \E[\overline{T_X}]
    & \ge  \frac{1}{\Delta^2}\sum_{uv\in \mathcal{J}_2} \frac{2}{(1-\ell_{uv})(2-\ell_{uv})} \\ & \phantom{space} - \frac{|\mathcal{J}_3|}{\Delta^3} - \frac{1}{3\Delta^2} \cdot \sum_{uv\in \mathcal{J}_2} \left( \frac{2}{(2-\ell_{uv})(1-\ell_{uv})}-1 \right)
    + o_{\gamma}(1) + o_{\Delta}(1) \\
    &= \frac{1}{3 \Delta^2} \cdot \sum_{uv\in \mathcal{J}_2} \left( \frac{4}{(2-\ell_{uv})(1-\ell_{uv})}+1\right) - \frac{|\mathcal{J}_3|}{\Delta^3} + o_{\gamma}(1) + o_{\Delta}(1).
\end{align*}
Since the function $x\mapsto \frac{4}{(2-x)(1-x)}$ is increasing on $[0,1)$, we have
$$ \frac{1}{3\Delta^2} \cdot \sum_{uv\in \mathcal{J}_2} \left( \frac{4}{(2-\ell_{uv})(1-\ell_{uv})} +1\right) \ge \frac{1}{3\Delta^2} \cdot \sum_{uv\in \mathcal{J}_2} \left( \frac{4}{(2)(1)}+1 \right) = \frac{|\mathcal{J}_2|}{\Delta^2}.$$
Combining the inequalities above yields
$$ \E [\overline{P_X}] - \E[\overline{T_X}] \ge \frac{|\mathcal{J}_2|}{\Delta^2} - \frac{|\mathcal{J}_3|}{\Delta^3} + o_{\gamma}(1) + o_{\Delta}(1),$$
as claimed.
\end{proof}

Since $|\mathcal{J}_2| = P_{G[X\cup \{r\}]}(r)$ and $|\mathcal{J}_3| = T_{G[X\cup \{r\}]}(r)$, we have $ \frac{|\mathcal{J}_2|}{\Delta^2} - \frac{|\mathcal{J}_3|}{\Delta^3} =\frac{|X|}{\Delta}-  \theb_{G[X\cup \{r\}],\Delta}(r)$ by definition. 

By Claim \ref{clm: 1-prob}, we have $\Expect{|I_X|} \le |X|/\Delta + o_{\Delta}(1)$. Hence, by Claim \ref{Clm: pairs-triples} and \eqref{eqn Bonferroni/PIE}, 
\begin{align*}
    \p[I_X \ne \emptyset] &\le \Expect{|I_X|} - \Expect{\overline{P_X}} + \Expect{\overline{T_X}} \\
    &\le \frac{|X|}{\Delta} - \frac{|\mathcal{J}_2|}{\Delta^2} + \frac{|\mathcal{J}_3|}{\Delta^3} + o_{\gamma}(1) + o_{\Delta}(1) \\
    &= \theb_{G[X\cup \{r\}], \Delta}(r) + o_{\gamma}(1) + o_{\Delta}(1),
\end{align*}
as desired. 
\end{proof}

\subsection{One nibble to partially color}
Let $\cH$ be a hypergraph. A {\it list assignment} $L$ for $E(\cH)$ is a collection of lists of colors $L(e)$, and  a \textit{partial $L$-coloring} of $E(\cH)$ is an assignment $c:E_1 \to \bigcup L(e)$ where $E_1\subseteq E(\cH)$ such that $c(e) \in L(e)$ for all $e\in E_1$ and $c(e_1) \neq c(e_2)$ for every pair of distinct edges $e_1,e_2 \in E_1$ such that $e_1 \cap e_2 \neq \emptyset$. Note that $c$ is a proper edge-coloring of the hypergraph induced by $\cH$ on $E_1$. We call it a partial coloring because there may remain some uncolored edges. The coloring is \textit{proper} if $c(e_1) \ne c(e_2)$ for every pair of distinct edges $e_1,e_2\in E_1$ where $e_1\cap e_2 \ne \emptyset$.
Given a partial $L$-coloring $c:E_1 \to \bigcup L(e)$, we say the \textit{residual hypergraph} $\cH_c$ is the hypergraph obtained from $\cH$ by deleting all the edges in $E_1$, that is, $\cH_c$ is the subgraph of $\cH$  induced by all uncolored edges.  The \textit{residual list assignment} $L_c$ is a list assignment for $E(\cH_c)$ such that for $e\in E(\cH_c)$, the list of colors $L_c(e)$ for $e$ is the list obtained from $L(e)$ by deleting color $\alpha$ if and only if there exists some $f\in E_1$ such that $c(f) = \alpha$ and $f\cap e \ne \emptyset$. That is, edge $e$ keeps a particular color in its list if there is no neighboring edge that has been assigned that color.

In our proof of Theorem~\ref{thm:gct}, we will apply the following lemma iteratively to construct a partial proper edge-coloring of $\cH$. In each application of the lemma, we find a partial $L$-coloring for some list assignment $L$ for $E(\cH)$ in which the residual hypergraph has a smaller maximum
degree with residual list sizes that are not too much smaller. The proof is similar to that of \cite[Lemma 2.2]{HVK22}; we consider a random partial coloring and apply the Lovász Local Lemma, but we have modified the coloring procedure in a way that makes the analysis slightly simpler. In
particular, by generating each color class independently, we can use McDiarmid’s Inequality to show concentration of degrees in the residual hypergraph and the Chernoff Bounds to show concentration of the residual list sizes instead of the more complicated Talagrand’s Inequality.

\begin{lemma} \label{thm: 2.2 - nibble}
    For every $A, k \in \mathbb{N}$ and $\iota > 0$, there exists $\gamma_0 > 0$ such that for all $\gamma \ge \gamma_0$, the following holds for all sufficiently large $D$. Let $\mathcal{H}$ be a $k$-uniform hypergraph with $\Delta(\mathcal{H}) \le D$, and let $L : E(\mathcal{H}) \to 2^{[AD]}$ be a list assignment for $E(\mathcal{H})$ where each list has size at least $\ell := \lceil kD/\gamma \rceil$. If  $b \ge \max_{e \in E(\mathcal{H})} b_{L(\mathcal{H}), kD}(e)$, then there is a partial $L$-coloring 
$c$ of $E(\mathcal{H})$ such that the residual hypergraph  $\mathcal{H}_c$ and residual list assignment $L_c$  satisfy
    \begin{enumerate}[label=\textup{(N\arabic*)}]
        \item \label{thm: 2.2 prop N1}  $\Delta(\cH_c) \le D' := (1-\frac{1-\iota}{\gamma})D$, and
        \item \label{thm: 2.2 prop N2} $|L_c(e)|\ge \ell':=(1 -b-\iota) \ell$ for every $e\in E(\cH_c)$.
    \end{enumerate}
\end{lemma}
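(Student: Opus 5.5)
We first regularize, then build each color class independently with the \texttt{algorithm} of Section~\ref{section: algorithm}, and finally use the Lovász Local Lemma to find a good outcome. For the regularization, we embed $L(\cH)$ into a $kD$-regular graph $G$ (degree exactly $\Delta=kD$). We do this by taking disjoint copies of $\cH$ and adding padding edges/vertices, as in \cite{HVK22}. Then $G[N_{L(\cH)}(e)\cup\{e\}]$ is unchanged for original edges $e$, so $b_{G[X\cup\{e\}],kD}(e)\le b_{L(\cH),kD}(e)\le b$ for $X=N_{L(\cH)}(e)$. For each color $\alpha\in[AD]$ independently, we consider the subgraph $G_\alpha$ induced on vertices whose list contains $\alpha$. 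Rather than working with $G_\alpha$ directly, it is cleaner to run the \texttt{algorithm} on all of $G$ with activation probability $\gamma/\Delta$, obtain $I_\alpha$, and then intersect with $\{e:\alpha\in L(e)\}$; this is still independent. Each uncolored edge $e$ that lands in some $I_\alpha$ receives one such $\alpha$, say the first. The result is a proper partial $L$-coloring, because each $I_\alpha$ is independent in $L(\cH)$.

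Next we compute expectations. For the degrees, an edge $e$ with $|L(e)|\ge \ell=\lceil kD/\gamma\rceil$ is colored with probability at least $1-(1-\frac{1-e^{-\gamma}}{kD}-\frac{2}{(kD)^2})^{\ell}\approx 1-e^{-(1-e^{-\gamma})/\gamma}\ge \frac{1-\iota/2}{\gamma}$ for $\gamma\ge\gamma_0$, using Claim~\ref{clm: 1-prob} and the independence across colors. By linearity, each vertex $v$ of $\cH$ then has expected residual degree at most $(1-\frac{1-\iota/2}{\gamma})D$. For the lists, a color $\alpha\in L(e)$ is lost when $I_\alpha\cap N_{L(\cH)}(e)\cap\{f:\alpha\in L(f)\}\neq\emptyset$. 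By Theorem~\ref{thm: 2.1 - ind sets} applied with $X$ this set, together with monotonicity of $b$ under restricting $X$, this happens with probability at most $b+\iota/2$. Here we must be careful: $b$ is not obviously monotone in $X$. We instead bound by the probability for the full $X=N(e)$, which is at least the probability for any subset, so we get at most $b_{L(\cH),kD}(e)+\iota/2\le b+\iota/2$. These events are independent over $\alpha$, since the color classes are generated independently. The event that $e$ itself is colored only removes $e$ from consideration, so $\E|L_c(e)|\ge(1-b-\iota/2)|L(e)|$.

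The remaining work is concentration and the Local Lemma, which is where most of the effort lies. For $|L_c(e)|$, it is a sum of independent indicators over $\alpha\in L(e)$, so a Chernoff bound gives a deviation of $\iota\ell/2$ with failure probability $e^{-\Omega(\ell)}=e^{-\Omega(D/\gamma)}$. We should first truncate $L(e)$ to exactly $\ell$ colors so that the bound is in terms of $\ell$. For the residual degree of $v$, it is a function of the independent random choices (activations and priorities per color) on vertices within distance $2$ in $L(\cH)$ of the edges at $v$. Changing the choices of one vertex $g$ for one color affects at most $O(k)$ membership outcomes near $v$ for that color. We therefore apply McDiarmid's Inequality. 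To get a good Lipschitz constant, we first condition on the typical event that few vertices are activated, or we use the version of McDiarmid with bounded differences weighted by activation probabilities. The main obstacle is that the number of underlying variables is about $AD\cdot D^2$, so the naive Lipschitz bound is too weak. What I would try first is to group variables by color: the residual degree is a function of the $AD$ independent color-class outcomes restricted to a neighborhood, and each color class changes the degree by at most $k$ typically. We make this rigorous via exceptional-outcomes McDiarmid, where the exceptional event (many activated vertices in some class) has probability $e^{-\Omega(D)}$. This gives deviation $\iota D/(2\gamma)$ with probability $e^{-\Omega(D^{\Omega(1)})}$. Finally, each bad event depends only on randomness within bounded distance, so it is dependent on at most $\mathrm{poly}(D)$ others. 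The symmetric Local Lemma then yields an outcome satisfying \ref{thm: 2.2 prop N1} and \ref{thm: 2.2 prop N2} for every vertex and edge of the original $\cH$.
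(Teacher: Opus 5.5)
Your proposal is correct and follows essentially the same route as the paper: independent color classes from the \texttt{algorithm}, Claim~\ref{clm: 1-prob} for the expected residual degree, Theorem~\ref{thm: 2.1 - ind sets} plus a Chernoff bound on the per-color loss indicators for the lists, McDiarmid over the $AD$ color classes, and the Local Lemma. Running every color on a single regular embedding of $L(\cH)$, and bounding the loss event by the one for $X=N_{L(\cH)}(e)$, neatly replaces the paper's per-color embeddings $G'_\alpha$ and its direct check that $b_{G_\alpha,\Delta}(v)\le b_{G,\Delta}(v)$ (that check shows $b$ is in fact monotone under deleting neighbors). The exceptional-outcomes version of McDiarmid is unnecessary: the edges through $x$ form a clique in $L(\cH)$, so each $I_\alpha$ contains at most one of them and $\deg_{\cH_c}(x)$ is deterministically $1$-Lipschitz in the color classes, which is exactly how the paper obtains the bound $\exp(-\iota^2 D/(2\gamma^2 A))$.
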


\begin{proof}
    
Let $G := L(\mathcal{H})$ be the line graph of $\mathcal{H}$, let $\Delta := kD$, and note that $\Delta(G) \le \Delta$. We assume without loss of generality that $|L(v)| = \ell$ for each $v \in V(G)$, as otherwise we can remove $|L(v)| - \ell$ colors from a vertex $v$'s list.

We will consider each color independently. For each color $\alpha$, define $G_\alpha$ to be the subgraph of $G$ induced by the vertices $v$ with $\alpha \in L(v)$. Using a standard argument (see \cite[Section 1.5]{molloy2002txt}), we can embed each $G_\alpha$ in a $\Delta$-regular graph $G'_\alpha$ which contains $G_\alpha$ as an induced subgraph.

Now, for each color $\alpha$ which appears in any vertex's list, apply the \texttt{algorithm} defined in Section \ref{section: algorithm} with parameter $\gamma$ to generate a random independent set $I(\alpha)$ in $G'_\alpha$. We construct a random partial $L$-coloring $c : E(\mathcal{H}) \cap \bigcup I(\alpha) \to \bigcup L(e)$ of $E(\mathcal{H})$ as follows. If $e \in E(\mathcal{H}) \cap I(\alpha)$, then let $c(e) = \alpha$, breaking ties arbitrarily should $e$ appear in multiple independent sets. By construction, $c$ is a partial $L$-coloring of $E(\mathcal{H})$, and we show that with non-zero probability, the residual hypergraph $\mathcal{H}_c$ and residual list assignment $L_c$ satisfy \ref{thm: 2.2 prop N1} and \ref{thm: 2.2 prop N2}.

Let $G_c$ denote the line graph of the residual hypergraph $\mathcal{H}_c$. For each vertex $v \in V(G)$, define the random variable
\[ Del_c(v) := \{ \alpha \in L(v) : I(\alpha) \cap N_G(v) \neq \emptyset \}, \]
and note that for every $v \in V(G_c)$, we have $|L_c(v)| \ge \ell - |Del_c(v)|$. To simplify the probabilistic analysis, we examine $|Del_c(v)|$ rather than $|L_c(v)|$. To show that $\mathcal{H}_c$ and $L_c$ satisfy \ref{thm: 2.2 prop N1} and \ref{thm: 2.2 prop N2} with non-zero probability, we use the Lovász Local Lemma. To that end, we define the following sets of ``bad'' events:
\begin{itemize}
    \item for each $x \in V(\mathcal{H})$, let $B_1(x)$ be the event that $\text{deg}_{\mathcal{H}_c}(x) > D'$, and
    \item for each $v \in V(G)$, let $B_2(v)$ be the event that $|Del_c(v)| > \ell - \ell'$.
\end{itemize}

We will show that the probability of each of these bad events is small. First, we compute the expected degree of a vertex $x \in V(\mathcal{H})$ in $\mathcal{H}_c$ and the expected size of $Del_c(v)$ for a vertex $v \in V(G)$, and then we show these random variables are concentrated around their expectation with high probability.
\begin{claim}\label{claim4.2.1}
    For every $x \in V(\mathcal{H})$,
\[ \mathbb{E} \left[ \text{deg}_{\mathcal{H}_c}(x) \right] \le D' - \frac{\iota}{2\gamma} \cdot D. \]
\end{claim}

\begin{proof}
By the linearity of expectation, we have
\[ \mathbb{E} \left[ \text{deg}_{\mathcal{H}_c}(x) \right] = \sum_{e \in E(\mathcal{H}): e \ni x} \mathbb{P} [e \in E(\mathcal{H}_c)], \] 
and for each $e \in E(\mathcal{H})$, we have
\[ \mathbb{P} [e \in E(\mathcal{H}_c)] = \mathbb{P} \left[ e \notin \bigcup I(\alpha) \right] = \prod_{\alpha \in L(e)} \mathbb{P} [e \notin I(\alpha)], \]
where the last equality uses that the independent sets $I_\alpha$ are generated independently. By Theorem ~\ref{thm: 2.1 - ind sets}, for every $e \in E(\mathcal{H})$ and $\alpha \in L(e)$, we have
\[ \mathbb{P} [e \notin I(\alpha)] \le 1 - \frac{1 - e^{-\gamma}}{\Delta} + \frac{2}{\Delta^2}, \]
so by the inequalities above, we have
\[ \mathbb{E} [\text{deg}_{\mathcal{H}_c}(x)] \le \sum_{e \in E(\mathcal{H}): e \ni x} \prod_{\alpha \in L(e)} \left( 1 - \frac{1 - e^{-\gamma}}{\Delta} + \frac{2}{\Delta^2} \right) \le D \left( 1 - \frac{1 - e^{-\gamma}}{\Delta} + \frac{2}{\Delta^2} \right)^\ell. \]
Using the inequalities $1 + a \le e^a \le 1 + a + a^2$ for $a \in (-1, 1)$ and that $\ell \ge \Delta/\gamma$, we have
\[ \left( 1 - \frac{1 - e^{-\gamma}}{\Delta} + \frac{2}{\Delta^2} \right)^\ell \le e^{-\frac{1 - e^{-\gamma}}{\gamma} + \frac{2}{\Delta\gamma}} \le 1 - \frac{1 - e^{-\gamma}}{\gamma} + \frac{2}{\Delta\gamma} + \left( -\frac{1 - e^{-\gamma}}{\gamma} + \frac{2}{\Delta\gamma} \right)^2. \]
For $\gamma$ and $\Delta$ sufficiently large, we have
\[ \frac{e^{-\gamma}}{\gamma} + \frac{2}{\Delta\gamma} + \left( -\frac{1 - e^{-\gamma}}{\gamma} + \frac{2}{\Delta\gamma} \right)^2 \le \frac{\iota}{2\gamma}. \]
Combining the three inequalities above yields the claim, since $D' = (1 - 1/\gamma)D + \iota D/\gamma$.
\end{proof} 
\begin{claim}\label{claim4.2.2}
 For every $v \in V(G)$,
\[ \mathbb{E} [|Del_c(v)|] \le \left( b + \frac{\iota}{2} \right) \ell. \]   
\end{claim}

\begin{proof} By the linearity of expectation, we have
\[ \mathbb{E} [|Del_c(v)|] = \sum_{\alpha \in L(v)} \mathbb{P} [I(\alpha) \cap N_G(v) \neq \emptyset], \]
and for every $\alpha \in L(v)$, we have $I(\alpha)\cap N_G(v)=I(\alpha)\cap N_{G_\alpha}(v)$. Hence, by Theorem~\ref{thm: 2.1 - ind sets} with $G'_\alpha$, $v$, $N_{G_\alpha}(v)$, and $\iota/2$ playing the roles of $G$, $r$, $X$, and $\iota$, respectively, we have
\[ \mathbb{P} [I(\alpha) \cap N_G(v) \neq \emptyset] \le b_{G'_\alpha[N_{G_\alpha}[v]], \Delta}(v) + \frac{\iota}{2} = b_{G_\alpha, \Delta}(v) + \frac{\iota}{2}. \]
Thus, since $|L(v)| = \ell$, it suffices to show that for every $\alpha \in L(v)$, we have $b_{G_\alpha, \Delta}(v) \le b$. Indeed, let $d := \text{deg}_G(v) - \text{deg}_{G_\alpha}(v)$, and note that $P_{G_\alpha}(v) \ge P_G(v) - d\Delta$ and $T_{G_\alpha}(v) \le T_G(v)$, whence
\[ b_{G_\alpha, \Delta}(v) \le \frac{\text{deg}_G(v) - d}{\Delta} - \frac{P_G(v) - d\Delta}{\Delta^2} + \frac{T_G(v)}{\Delta^3} = b_{G, \Delta}(v) \le b, \]
as desired.
\end{proof}

\begin{claim}\label{cliam2.4.3}
  For every $x \in V(\mathcal{H})$,
\[ \mathbb{P} \left[ \text{deg}_{\mathcal{H}_c}(x) > D' \right] \le D^{-\log D}. \]  
\end{claim}

\begin{proof} The random variable $\text{deg}_{\mathcal{H}_c}(x)$ is determined by the independently constructed sets $I(\alpha)$ for each color $\alpha$. Notice that if $I(\alpha)$ is changed for a single color $\alpha$, then this changes $\text{deg}_{\mathcal{H}_c}(x)$ by at most one, since at most one edge containing $x$ can be assigned color $\alpha$. That is, the function $\text{deg}_{\mathcal{H}_c}(x)$ is 1-Lipschitz. Hence, McDiarmid's Inequality \cite{McDiarmid_1989} implies that
\[ \mathbb{P} \left[ \text{deg}_{\mathcal{H}_c}(x) \ge \mathbb{E} \left[ \text{deg}_{\mathcal{H}_c}(x) \right] + t \right] \le \exp \left( -\frac{2t^2}{\sum_{\alpha}(1^2)} \right) \]
for any $t > 0$. We apply this result with $t = \iota D/(2\gamma)$. By assumption, the total number of colors is at most $AD$, so $\sum_{\alpha}(1^2) = AD$. Thus, by Claim \ref{claim4.2.1}, we have
\[ \mathbb{P} \left[ \text{deg}_{\mathcal{H}_c}(x) > D' \right] \le \exp \left( -\frac{2((\iota D)/(2\gamma))^2}{AD} \right) = \exp \left( -\frac{\iota^2}{2\gamma^2 A} D \right) \le D^{-\log D}, \]
as desired. \end{proof}
\begin{claim}\label{cliam2.4.4}
 For every $v \in V(G)$,
\[ \mathbb{P} \left[ |Del_c(v)| \ge \ell - \ell' \right] \le D^{-\log D}. \]   
\end{claim} 

\begin{proof} The random variable $|Del_c(v)|$ is the sum of independent Bernoulli random variables, since each color class is generated independently. By the Chernoff Bounds \cite{Chernoff}, we have
\[ \mathbb{P} \left[ |Del_c(v)| \ge \mathbb{E} \left[ |Del_c(v)| \right] + t \right] \le \exp \left( -\frac{t^2}{2\mathbb{E} \left[ |Del_c(v)| \right] + t} \right) \]
for any $t \ge 0$. We apply this result with $t = \ell \cdot \frac{\iota}{2}$. By Claim \ref{claim4.2.2}, we have
\[ t = \ell \cdot \frac{\iota}{2} = \ell - (1 - b-\iota)\ell - \left( b + \frac{\iota}{2} \right)\ell \le \ell - \ell' - \mathbb{E} \left[ |Del_c(v)| \right], \]
and thus,
\begin{align*}
\mathbb{P} \left[ |Del_c(v)| \ge \ell - \ell' \right] 
& \le \mathbb{P} \left[ |Del_c(v)| \ge \Expect{|Del_c(v)|} + t \right] \\
&\le \exp \left( -\frac{(\ell \cdot \iota /2)^2}{2\ell(b + \iota/2) + \ell \cdot \iota /2} \right) \\
&\le \exp \left( -\frac{\iota^2}{16}\ell \right) \le D^{-\log D},
\end{align*}
where the last inequality uses that $\ell \ge kD/\gamma$ and $D$ is sufficiently large.
\end{proof}

Let $\mathcal{B} := \{B_1(x) : x \in V(\mathcal{H})\} \cup \{B_2(v) : v \in V(G)\}$ be the set of all bad events we wish to prevent. Note that each color class $I_\alpha$ generated by the \texttt{algorithm} defined in Section \ref{section: algorithm} is determined by a set of $2|V(G)|$ independent trials: one trial for each vertex $v$ determining whether it is ``activated'', and another trial to determine $v$'s priority. Moreover, for every $x \in V(\mathcal{H})$, the event $B_1(x)$ is determined by the trials for edges with distance at most 1 from an edge containing $x$ in $\mathcal{H}$, and for every $v \in V(G)$, the event $B_2(v)$ is determined by the trials for vertices with distance at most 2 from $v$ in $G$. Hence, each event in $\mathcal{B}$ is mutually independent of all but at most $2\Delta^4$ other events in $\mathcal{B}$. Therefore, by Claims \ref{cliam2.4.3} and \ref{cliam2.4.4} and the Lovász Local Lemma, with positive probability, neither of the bad events $B_1(x)$ nor $B_2(v)$ happens. For such a coloring, the residual hypergraph and list assignment satisfy \ref{thm: 2.2 prop N1} and \ref{thm: 2.2 prop N2}, which completes the proof.
\end{proof} 
\subsection{Proof of Theorem~\ref{thm:gct}}

The result in the previous section, Lemma \ref{thm: 2.2 - nibble}, consisted of one nibble step of a coloring procedure. We need only iterate this, and show that our random coloring procedure pans out in context.

\begin{proof}[Proof of Theorem~\ref{thm:gct}]

  It suffices to prove the result for $0<\eps\le 1$, since every $k$-uniform hypergraph of maximum degree at most $D$ has chromatic index at most $kD$.
  Given $\eps$ and $k$, set $A:=\left\lceil 3(1+\eps)k/\eps\right\rceil$. We choose $\gamma$ sufficiently large with respect to $A$ and $\eps$, and then let $D$ be sufficiently large with respect to $\gamma$. Let $\mathcal{H}$ be a given $k$-uniform hypergraph satisfying the hypothesis of the statement. We will iteratively color the vertices of $L(\mathcal{H})$ by repeatedly applying Lemma \ref{thm: 2.2 - nibble}.

For $i\ge 1$, let
\begin{align*}
    D_i&:= \left(1-\frac{1-\eps/6}{\gamma}\right)^{i-1}D, \\
    \ell_i&:= \frac{kD_i}{\gamma},\\
    \ell_i'&:=(1-\theb - \eps/6) \ell_i, \\
    K_i &:= \left\lceil(\theb+(\eps/2))k(D-D_i) + \frac{kD}{\gamma}\right\rceil,
\end{align*}
and let $L^{(i)}$ be the list assignment for $E(\mathcal{H})$ where $L^{(i)}(e) = [K_i]$ for every edge $e \in \mathcal{H}$.

\begin{claim}\label{clm: 4.2.5ell}
    For all $i\ge 1$ such that $D_i\ge \eps D/3$, $$ \ell'_i + K_{i+1} - K_i \ge \ell_{i+1}.$$
\end{claim}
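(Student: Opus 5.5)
The claim is a direct computation from the definitions of $D_i$, $\ell_i$, $\ell'_i$, and $K_i$. The plan is to expand $K_{i+1}-K_i$, handle the ceilings by losing at most an additive $1$, and compare the result with $\ell_{i+1}-\ell'_i$. Write $\rho := \frac{1-\eps/6}{\gamma}$, so that $D_{i+1} = (1-\rho)D_i$ and $D_i - D_{i+1} = \rho D_i$. Since $\lceil a\rceil - \lceil b\rceil \ge a - b - 1$, the definitions give
\[
K_{i+1} - K_i \ \ge\ (\theb + \eps/2)\,k\,(D_i - D_{i+1}) - 1 \ =\ (\theb+\eps/2)\,\frac{(1-\eps/6)\,kD_i}{\gamma} - 1 .
\]
The $kD/\gamma$ terms cancel in this difference, which is why the ceilings cost only the additive $1$.

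Next, substitute $\ell_i = kD_i/\gamma$ and $\ell_{i+1} = (1-\rho)kD_i/\gamma$. After dividing by $kD_i/\gamma$, the desired inequality $\ell'_i + K_{i+1}-K_i \ge \ell_{i+1}$ follows from
\[
(1-\theb-\eps/6) + (\theb+\eps/2)(1-\eps/6) - \frac{\gamma}{kD_i} \ \ge\ 1 - \frac{1-\eps/6}{\gamma}.
\]
The left side equals
\[
1 + \frac{\eps}{3} - \frac{\eps}{6}\,(\theb+\eps/2) - \frac{\gamma}{kD_i}.
\]
Using $\theb < 1$ and $\eps \le 1$, we get $\frac{\eps}{6}(\theb + \eps/2) \le \frac{\eps}{6}\cdot\frac32 = \frac{\eps}{4}$. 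Hence the left side is at least $1 + \frac{\eps}{12} - \frac{\gamma}{kD_i}$.

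The hypothesis $D_i \ge \eps D/3$ is where the rounding error is absorbed. It gives $\frac{\gamma}{kD_i} \le \frac{3\gamma}{k\eps D}$, which is smaller than $\eps/12$ once $D$ is sufficiently large relative to $\gamma$ and $\eps$, as arranged at the start of the proof. So the left side exceeds $1$, which in turn exceeds the right side $1 - \frac{1-\eps/6}{\gamma}$.

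The main thing to check is this bookkeeping of the ceiling error. If the ceilings were instead meant to be handled exactly, the $-1$ would simply be dropped, and the margin of $\eps/12$ is ample in any case. No probabilistic input is needed for this claim.
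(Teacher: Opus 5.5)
Your proof is correct and is essentially the paper's computation. Both lose the additive $1$ from the ceilings in $K_{i+1}-K_i$ and factor out $kD_i/\gamma$ to get the coefficient $1+\eps/2-\frac{\eps}{6}(1+b+\eps/2)\ge 1+\eps/12$, using $b<1$ and $\eps\le 1$. Both then absorb the $-1$ through $D_i\ge \eps D/3$ with $D$ large relative to $\gamma$ and $\eps$.
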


\begin{proof}
    We have
    \begin{align}
\ell'_i + K_{i+1} - K_i
&\ge (1 - \theb - \eps/6) \ell_i + \left( \theb + \frac{\eps}{2} \right) k(D_i - D_{i+1})-1 \nonumber \\
&= (1 - \theb - \eps/6) \frac{kD_i}{\gamma} + \left( \theb + \frac{\eps}{2} \right) k \left( \frac{1 - \eps/6}{\gamma} \right) D_i-1
\nonumber\\
&= \frac{kD_i}{\gamma} \left(1+\frac{\eps}{2} - \frac{\eps}{6}(1+\theb + \eps/2)\right)-1 \nonumber \\
&\ge \frac{kD_i}{\gamma} (1+\eps/12)-1 \ge \frac{kD_{i+1}}{\gamma} = \ell_{i+1}. \nonumber \qedhere
\end{align}
\end{proof}

\begin{claim}\label{clm: gct claim}
For all $i \ge 1$ such that either $i=1$ or $D_{i-1} \ge \eps D/3$, there exists a partial $L^{(i)}$-coloring $c_i$ of $E(\mathcal{H})$ such that the residual hypergraph $\mathcal{H}_{c_i}$ and residual list assignment $L_{c_i}^{(i)}$ satisfy
\begin{itemize}
    \item $\Delta(\mathcal{H}_{c_i}) \le D_i$ and
    \item $|L_{c_i}^{(i)}(e)| \ge \ell_i$ for every edge $e \in E(\mathcal{H}_{c_i})$.
\end{itemize}
\end{claim}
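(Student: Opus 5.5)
We argue by induction on $i$, building each $c_{i+1}$ by extending $c_i$ with one application of Lemma~\ref{thm: 2.2 - nibble}.

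\emph{Base case $i=1$.} Take $c_1$ to be the empty coloring. Then $\mathcal{H}_{c_1}=\mathcal{H}$, and $\Delta(\mathcal{H})\le D=D_1$. Every list equals $[K_1]$, where $K_1=\lceil kD/\gamma\rceil\ge \ell_1$. So both conditions hold.

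\emph{Inductive step.} Suppose $c_i$ has been constructed and $D_i\ge \eps D/3$, which is the hypothesis needed for index $i+1$. Set $\mathcal{H}_i:=\mathcal{H}_{c_i}$. We have $\Delta(\mathcal{H}_i)\le D_i$, and each residual list $L^{(i)}_{c_i}(e)$ has size at least $\ell_i$.

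We apply Lemma~\ref{thm: 2.2 - nibble} to $\mathcal{H}_i$ with $D_i$ in place of $D$, with $\iota:=\eps/6$, and with the residual lists, truncated to size exactly $\ell_i=\lceil kD_i/\gamma\rceil$; rounding is absorbed by the ceiling. Three hypotheses must be checked.
\begin{enumerate}
\item The palette must lie in $[AD_i]$. It lies in $[K_i]$, and
\[
K_i\le (1+\eps)kD+1\le 3(1+\eps)kD_i/\eps+1\le AD_i
\]
because $D_i\ge \eps D/3$. This is exactly why $A$ was chosen as it was.
\item $D_i$ must be sufficiently large. This holds since $D_i\ge \eps D/3$ and $D$ is large relative to $\gamma$.
\item We need $b\ge b_{L(\mathcal{H}_i),kD_i}(e)$ for all $e$. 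This is the hypothesis of Theorem~\ref{thm:gct} applied to the subhypergraph $H=\mathcal{H}_i\subseteq\mathcal{H}$ with $D'=D_i\ge\eps D/3$.
\end{enumerate}
The lemma then yields a partial coloring $c$ of $\mathcal{H}_i$ with $\Delta((\mathcal{H}_i)_c)\le(1-\frac{1-\eps/6}{\gamma})D_i=D_{i+1}$. Every surviving edge keeps at least $\ell_i'$ colors from its residual list in $[K_i]$.

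Define $c_{i+1}:=c_i\cup c$. This is a proper partial $L^{(i+1)}$-coloring: the colors used by $c$ avoid every color on neighboring edges of $c_i$, and $[K_i]\subseteq[K_{i+1}]$. Its residual hypergraph is $(\mathcal{H}_i)_c$, which has maximum degree at most $D_{i+1}$.

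For a surviving edge $e$, its residual list in $L^{(i+1)}$ contains the at least $\ell'_i$ surviving colors from $[K_i]$. It also contains all $K_{i+1}-K_i$ new colors in $[K_{i+1}]\setminus[K_i]$, since no edge has been assigned any of them. Claim~\ref{clm: 4.2.5ell} then gives
\[
|L^{(i+1)}_{c_{i+1}}(e)|\ge \ell'_i+K_{i+1}-K_i\ge \ell_{i+1}.
\]

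\emph{Main obstacle.} The delicate point is bookkeeping rather than probability. The lemma controls only the residual lists we feed it, which were truncated to size $\ell_i$. Colors discarded in the truncation are simply not counted, and this is harmless because we only need a lower bound. We must also make sure the new colors $[K_{i+1}]\setminus[K_i]$ are untouched at every edge.

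A secondary check is that $\gamma_0$ from the lemma depends only on $A$, $k$ and $\iota=\eps/6$. Hence a single $\gamma$ can be chosen before $D$ and works uniformly for all $i$.
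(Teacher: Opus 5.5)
Your proposal is correct and is essentially the paper's proof. It inducts from the empty coloring and applies Lemma~\ref{thm: 2.2 - nibble} once to $\mathcal{H}_{c_i}$ with $D_i$, $\iota=\eps/6$ and $A$, with the $b$-hypothesis coming from Theorem~\ref{thm:gct} for $H=\mathcal{H}_{c_i}$, $D'=D_i$. It then uses Claim~\ref{clm: 4.2.5ell} to combine the $\ell'_i$ surviving colors in $[K_i]$ with the $K_{i+1}-K_i$ untouched new colors.

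There is one small slip, in your palette check. The step $3(1+\eps)kD_i/\eps+1\le AD_i$ fails when $3(1+\eps)k/\eps$ is an integer. You can drop the $+1$ by bounding $K_i\le(b+\eps/2+1/\gamma)kD+1\le(1+\eps)kD$ directly, using $\gamma>2/\eps$ and $D$ large, as the paper does. Also, the paper's $\ell_i$ is $kD_i/\gamma$ without a ceiling; this is harmless, since list sizes are integers.
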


\begin{proof}[Proof of Claim~\ref{clm: gct claim}]
    We proceed by induction. As a base case, for $i=1$, the trivial coloring $c_1 : \emptyset \to \emptyset$, regarded as a partial $L^{(1)}$-coloring, satisfies both conditions.

   Now suppose $i\ge 1$ and $D_i \ge \eps D/3$. We will prove the claim for $i+1$. By the inductive hypothesis, there is a partial $L^{(i)}$-coloring $c_i$ of $E(\mathcal{H})$ such that the residual hypergraph $\mathcal{H}_i := \mathcal{H}_{c_i}$ and residual list assignment $L_i := L_{c_i}^{(i)}$ satisfy $\Delta(\mathcal{H}_i) \le D_i$ and $|L_i(e)| \ge \ell_i$ for every edge $e \in E(\mathcal{H}_i)$.

Since $D_i \ge \eps D/3$, our choices of $\gamma$ and $D$ ensure that $K_i\le (1+\eps)kD\le AD_i$, and we have $\theb \ge \max_{e \in E(\mathcal{H}_i)} b_{L(\mathcal{H}_i), kD_i}(e)$. Thus, by Lemma~\ref{thm: 2.2 - nibble} applied to $\mathcal{H}_i$ with $D_i$, $\eps/6$, and $A$ playing the roles of $D, \iota$, and $A$, respectively, there is a partial $L_i$-coloring $c_i'$ of $E(\mathcal{H}_i)$ such that the resulting residual hypergraph has maximum degree at most $\left( 1 - \frac{1 - \eps/6}{\gamma} \right) D_i = D_{i+1}$ and the residual list sizes are at least $(1 - \theb - \eps/6)\ell_i = \ell'_i$. Combining $c_i$ and $c_i'$ yields a partial $L^{(i+1)}$-coloring $c_{i+1}$. Claim~\ref{clm: 4.2.5ell} gives $|L_{c_{i+1}}^{(i+1)}(e)| \ge \ell'_i + K_{i+1} - K_i \ge \ell_{i+1}$ for every edge $e \in E(\mathcal{H}_{c_{i+1}})$. Thus $c_{i+1}$ satisfies both conditions.
\end{proof}

Let $i\ge 2$ be maximum such that $D_{i-1}\ge \eps D/3$. By Claim~\ref{clm: gct claim}, there is a partial $L^{(i)}$-coloring $c_i$ satisfying $\Delta(\mathcal{H}_{c_i})\le D_i<\eps D/3$.

We have $K_i \le (\theb+\eps/2)kD + \frac{kD}{\gamma}+1  \le (\theb+\eps/2 + \frac{1}{\gamma})kD + 1$. The residual hypergraph $\cH_{c_i}$ satisfies $\Delta(\cH_{c_i}) < \eps D/3$, whence we can finish coloring the edges with at most $1+\eps kD /3$ additional colors. Altogether, by our choices of $\gamma$ and $D$, we color the edges of $\cH$ with at most $(\theb+\eps ) kD$ colors.
\end{proof}


\providecommand{\bysame}{\leavevmode\hbox to3em{\hrulefill}\thinspace}
\providecommand{\MR}{\relax\ifhmode\unskip\space\fi MR }
\providecommand{\MRhref}[2]{%
  \href{http://www.ams.org/mathscinet-getitem?mr=#1}{#2}
}
\providecommand{\href}[2]{#2}

\end{document}